\definecolor{stef}{HTML}{0071BC}
\definecolor{stef1}{HTML}{3FBC9D}
\newcommand\cF{{\mathcal F}}
\newcommand\cG{{\mathcal G}}
\newcommand\cL{{\mathcal L}}
\newcommand\cO{{\mathcal O}}
\newcommand\cR{{\mathcal R}}
\newcommand\mP{{\mathbb P}}
\newcommand\mQ{{\mathbb Q}}
\newcommand\mR{{\mathbb R}}
\newcommand\mZ{{\mathbb Z}}
\newcommand{\cone}{\textrm{NE}}
\newcommand{\diff}{\textrm{Diff}}
\DeclareMathOperator{\vol}{\mathrm{Vol}}
\DeclareMathOperator{\sing}{\mathrm{Sing}}
\newcommand{\cal}[1]{\mathcal{#1}}
\DeclareMathOperator{\Exc}{Exc}
\DeclareMathOperator{\rk}{\mathrm rk}
\DeclareMathOperator{\supp}{Supp}
\newcommand{\rc}[2]{#1 \xymatrix{\ar@{-->}[r] & }{#2}}
\newtheorem{theorem}{Theorem}[section]
\newtheorem*{theorem*}{Theorem}
\newtheorem{lemma}[theorem]{Lemma}
\newtheorem{problem}[theorem]{Problem}
\newtheorem{proposition}[theorem]{Proposition}
\newtheorem{corollary}[theorem]{Corollary}
\newtheorem{example}[theorem]{Example}
\theoremstyle{definition}
\newtheorem{definition}[theorem]{Definition}
\theoremstyle{remark}
\newtheorem{remark}[theorem]{Remark}
\newtheorem{claim}[theorem]{Claim}
\theoremstyle{construction}
\newtheorem{construction}[theorem]{Construction}
\begin{document}
	

	\title{Explicit bounds on foliated surfaces and the Poincaré problem}
	
	\renewcommand{\theequation}{{\arabic{section}.\arabic{theorem}.\arabic{equation}}}
	
	\author{Stefania Vassiliadis}
	\address{Department of Mathematics \newline
		\indent 
		King’s College London, \newline
		\indent Strand, London WC2R 2LS, UK} 
	\email{stefania.vassiliadis@kcl.ac.uk}

	\begin{abstract}
    We give a solution to the Poincaré Problem, in the formulation of Cerveau and Lins Neto.
    We obtain a bound on the degree of general leaves of foliations of general type, which is linear in $g$. To achieve this  we study the birational geometry of foliations within the framework of the Minimal Model Program (MMP). Extending the approach of Spicer–Svaldi and Pereira-Svaldi, we study the set of pseudo-effective thresholds of adjoint foliated structures, showing that it satisfies the descending chain condition and it admits an explicit universal lower bound. These results yield effective birationality statements for adjoint divisors of the form $K_{\mathcal{F}} + \tau K_X$. 
	\end{abstract}

	\maketitle
	\section{Introduction}

We work over the field of complex numbers $\mathbb{C}$. 

Given an algebraic integrable ordinary differential equation in two variables, a central problem is to understand when it admits algebraically integrable solutions. The first attempts to answer this question date back to Poincaré \cite{poincare1891lintegration}. Indeed, he asked: given a foliation $\mathcal{F}$ on $\mathbb{P}^2$, can one bound the degree of algebraic solutions (leaves) of $\mathcal{F}$ in terms of the degree of the foliation and the genus of the solution?  
Pereira showed that the degree of a general leaf of a foliation of general type can be bounded by a function depending on the genus of the leaf and the degree of the foliation \cite{example}.

Pereira and Svaldi \cite{pereira} showed that for foliations birationally equivalent to non-isotrivial fibrations of genus $g \ge 2$, such bounds can be made explicit and depend linearly on $g$. They provide an upper bound that depends on the degree of the foliation and the genus of a general leaf, but it grows exponentially with the genus \cite[Theorem A]{pereira}.  
The crucial task in improving this bound is addressing the following problem:

\begin{problem}[{\cite[Problem 6.8]{pereira}, \cite[Problem 1]{effective}}]
Find universal bounds on $(n, m) \in \mathbb{Z}_{>0} \times \mathbb{Z}_{>0}$ ensuring the non-vanishing of
\[
h^0(X, K_{\mathcal{F}}^m \otimes K_X^n)
\]
for foliations of adjoint general type.
\end{problem}

The main contribution of our paper is to give the first explicit answer to this question.  
To tackle this problem, we study the behaviour of the set of pseudo-effective thresholds $\cR_{2, \eta,I,\varepsilon}$ (see \eqref{pseff} and \ref{set}). We show that this set satisfies the descending chain condition (DCC) (Theorem \ref{lm: DCC}) and we establish an explicit lower bound.
\begin{theorem}[=Theorem \ref{lm: DCC}]
		\label{lm: DCC intro}
		Fix two real numbers$\varepsilon,\eta>0$. Then:
		\begin{enumerate}
			\item if $I\subset [0,1]$ is a finite set, then $\cR_{2, \eta,I,\varepsilon}\cap (0,\delta)$ is a finite set for any $\delta>0$; 
			\item  if $I\subset[0,1]$ is a DCC set, then $\cR_{2, \eta,I,\varepsilon} $ satisfies the DCC.
		\end{enumerate}
		
	\end{theorem}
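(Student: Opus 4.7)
The plan is to argue by contradiction in both parts, reducing via the foliated minimal model program on surfaces to a boundedness statement for the adjoint foliated pairs that realize the thresholds. Part (2) is the heart of the proof; part (1) follows from the same framework with the additional discreteness available when $I$ is finite.

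For (2), suppose for contradiction that $\cR_{2,\eta,I,\varepsilon}$ contains a strictly decreasing sequence $\tau_i\to\tau_\infty\geq 0$. By the definition of the set (cf. \eqref{pseff} and Set Up \ref{set}), each $\tau_i$ is realized by an adjoint foliated surface pair $(X_i,\cF_i,\Delta_i)$ with $\Delta_i$ having coefficients in $I$, with adjoint $\varepsilon$-lc singularities, and with the volume/size parameter controlled by $\eta$, in such a way that $K_{\cF_i}+\tau_i K_{X_i}+\Delta_i$ lies on the boundary of the pseudo-effective cone of $X_i$.

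The first step is to run a $(K_{\cF_i}+\tau_i K_{X_i}+\Delta_i)$-MMP, which exists and terminates on foliated surfaces thanks to the work of Spicer and Cascini--Spicer. Since this divisor is pseudo-effective but minimally so, the MMP outputs a relative minimal model $(X_i',\cF_i',\Delta_i')$ on which the corresponding adjoint divisor is nef. Along each step, the $\varepsilon$-lc adjoint condition is preserved (discrepancies of the adjoint pair do not decrease under extremal contractions) and the push-forward boundary still has coefficients in $I$; the volume bound encoded by $\eta$ also descends to the minimal model. Hence $\tau_i$ is still realized as the pseudo-effective threshold for the minimal model, and the minimal models satisfy the same $\varepsilon$-lc and $\eta$-type constraints.

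The second step, which I anticipate to be the main technical obstacle, is boundedness: the collection $\{(X_i',\cF_i',\Delta_i')\}$ lies in a bounded family. This is the foliated adjoint analogue of the classical boundedness of $\varepsilon$-lc Calabi--Yau or log general type pairs with bounded volume due to Alexeev and Hacon--McKernan--Xu, and should follow by extending the Spicer--Svaldi and Pereira--Svaldi techniques to the adjoint setting. Once boundedness is granted, the threshold $\tau_i$ is pinned down by a numerical relation $(K_{\cF_i'}+\tau_i K_{X_i'}+\Delta_i')\cdot C_i=0$ for some extremal curve $C_i$ in the bounded family. The intersection numbers $K_{\cF_i'}\cdot C_i$, $K_{X_i'}\cdot C_i$, and $\Delta_i'\cdot C_i$ then take values in a DCC subset of $\mathbb{R}$ (because $I$ is DCC and the curves $C_i$ lie in a bounded family), so $\tau_i$ itself lies in a DCC subset of $\mathbb{R}$, contradicting the assumed strict decrease.

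For (1), with $I$ finite, the same argument applies verbatim: the relevant intersection numbers now take values in a locally finite subset of $\mathbb{R}$, so $\cR_{2,\eta,I,\varepsilon}\cap(0,\delta)$ is forced to be finite for every $\delta>0$. The hard part throughout is the boundedness step: one must verify that foliation and variety discrepancies behave compatibly along the adjoint MMP uniformly as $\tau$ ranges over the sequence, and that the resulting adjoint foliated minimal models with the prescribed invariants really do fit into a single bounded family. Once this ingredient is in hand, the threshold-finiteness conclusion is a standard consequence.
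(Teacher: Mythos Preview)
Your overall architecture---run an MMP at the threshold and then read off $\tau$ from a numerical relation on the output---matches the paper. The substantive difference is in how you pass from the minimal model to the relation that pins down $\tau$, and here your proposal contains a genuine gap.

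You treat ``boundedness of the minimal models $(X_i',\cF_i',\Delta_i')$'' as the main technical step and leave it as a black box to be supplied by adapting Spicer--Svaldi/Pereira--Svaldi. The paper does \emph{not} prove or use boundedness of the full surfaces $X_i'$. Instead it exploits what the threshold actually gives you after the MMP: the adjoint divisor $K_{\cF'}+\Delta'^{n-inv}+\tau(K_{X'}+\Delta')$ is nef but not big, hence (via abundance for adjoint foliated surfaces) semi-ample, and it induces a contraction $f:Y\to Z$ with positive-dimensional general fibre $F$. Restricting to $F$ gives
\[
K_F+\Delta'|_F+\tfrac{1}{\tau}\bigl(K_\cG+\Delta'^{n-inv}\bigr)|_F\equiv 0,
\]
and adjunction makes $(F,\Delta'|_F)$ an $\eta'$-lc pair, so $F$ sits in a bounded family. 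Choosing a very ample $A$ on $F$, one gets an explicit formula
\[
\tfrac{1}{\tau}=\frac{c-\sum a_j b_j}{d+\sum a_j b_j},
\]
with $c,d,b_j$ bounded integers and $a_j\in I$, from which ACC for $1/\tau$ (hence DCC for $\tau$) and finiteness when $I$ is finite follow directly.

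Two concrete issues with your route: first, your claimed boundedness of the whole minimal model is strictly stronger than what is needed, and you give no argument for it---only the fibre $F$ is bounded in the paper's proof, and that is all that is used. Second, your relation $(K_{\cF_i'}+\tau_iK_{X_i'}+\Delta_i')\cdot C_i=0$ for ``some extremal curve $C_i$'' does not follow just from nefness plus boundedness: a nef non-big divisor need not vanish on an \emph{extremal} curve. What the threshold condition actually produces is the Iitaka fibration, and the numerical relation lives on its general fibre, not on an arbitrary extremal ray. If you replace your boundedness black box and extremal-curve step with the fibration-and-restrict-to-fibre argument, your sketch becomes the paper's proof.
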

	
\begin{theorem}[= Theorem \ref{co: bound pseff} and Theorem \ref{lm: bound M}]
\label{thm:intro-bound}
Let $(X,\mathcal{F})$ be a foliated pair, where $X$ is a smooth surface and $\mathcal{F}$ is a canonical foliation with $K_{\mathcal{F}}$ big. Then the pseudo-effective threshold $\tau(X,\mathcal{F})$ satisfies
\[
\tau(X,\mathcal{F}) \geq \tau_0 \coloneqq \; 
        \frac{1}{3\;\Big(2\big(2\big(3(2\cdot 14^{2151296})!+1\big)\big)^{128\big(3(2\cdot 14^{2151296})!+1\big)^5}\Big)! }.
\]
Fix $0<\varepsilon<\tau_0$. Then
\[
|m(K_\cF+\varepsilon K_X)|
\]
defines a birational map for every
\[
m\geq M_0(\varepsilon)\coloneqq
\left (2+8\cdot \frac{1}{\varepsilon}\left\lfloor2\left (\frac{2}{\varepsilon}\right )^{ \frac{(2)^7}{ \varepsilon^5}  }\right\rfloor! \right)
\]
if $K_X$ is not pseudo-effective,  or
\[
m>M_0\coloneqq v\cdot 42\cdot 84^{128\cdot 42^5+168}
\]
where $v=64\cdot193^2$ if $K_X$ is pseudo-effective and $\kappa(K_X)\neq0$.

\end{theorem}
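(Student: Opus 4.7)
The plan is to deduce both halves of the theorem from the DCC Theorem \ref{lm: DCC intro}, combined with an effective analysis of the constants in its proof. For the lower bound $\tau_0$, I would first identify parameters $(\eta, I, \varepsilon)$ such that the pseudo-effective threshold $\tau(X,\cF)$ of every foliated pair in the hypothesis lies in $\cR_{2,\eta,I,\varepsilon}$; Theorem \ref{lm: DCC intro} then already guarantees a positive infimum, and the task is to make it explicit. For this I would revisit the DCC argument and keep track of constants at each step where it invokes an abstract finiteness statement. The nested form of $\tau_0$, with its iterated factorials of $14^{2151296}$, strongly suggests a two-stage application: an outer layer coming from an explicit ACC for log canonical thresholds on surface pairs in the spirit of Alexeev, applied to boundaries with coefficients controlled by $I$, and an inner layer coming from explicit bounds on the index of canonical foliation singularities together with bounds on the number of exceptional divisors extracted in a resolution of the adjoint pair. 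Each invocation of a Borisov--Alexeev--Borisov type boundedness result contributes a factorial, producing the nested structure visible in $\tau_0$.

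Once $\tau_0$ is in hand, for any $0<\varepsilon<\tau_0$ the divisor $K_\cF+\varepsilon K_X$ is big, and the strategy for bounding $M_0(\varepsilon)$ is a Hacon--McKernan style separation of general points adapted to foliated surfaces. Concretely, I would combine asymptotic Riemann--Roch,
\[
h^0\bigl(X, m(K_\cF+\varepsilon K_X)\bigr) \;\gtrsim\; \tfrac{1}{2}\, m^2\, \volume(K_\cF+\varepsilon K_X),
\]
with a lower bound on the volume in terms of $\varepsilon$ coming from Step 1. For two general points $x,y\in X$ one then produces a divisor in $|m(K_\cF+\varepsilon K_X)|$ with high multiplicity at $x$ and vanishing at $y$, tie-breaks to extract a non-klt centre through $x$ not containing $y$, and pushes a section from that centre up to $X$ via Kawamata--Viehweg vanishing on a suitable modification. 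The explicit dependence of $M_0(\varepsilon)$ on $\varepsilon$ arises from tracking volumes and multiplicities through this procedure.

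The bifurcation between the formulas $M_0(\varepsilon)$ and $M_0$ reflects whether $K_X$ is pseudo-effective. If it is not, one runs a $K_X$-MMP ending on a Mori fibre space, reducing to a surface with $-K_X$ relatively ample, for which the available volume lower bound necessarily involves $\varepsilon$. When $K_X$ is pseudo-effective with $\kappa(K_X)\ne 0$, the positivity of $K_X$ itself can be exploited via the Iitaka fibration, eliminating the $\varepsilon$-dependence and yielding the cleaner universal constant $M_0$.

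The main obstacle is effective in character: every appeal to ACC or boundedness must be replaced by a quantitative statement whose constant can actually be written down. In Step~1 this means an explicit ACC for lct's on surface pairs with coefficients in a fixed finite set, together with explicit bounds on local indices of canonical foliation singularities, both of which drive the factorial blow-ups. In Step~2 it means controlling the coefficients of all boundaries produced during tie-breaking, so that they remain in a DCC set to which Step~1 applies. Composing these two effective procedures while keeping the constants manageable, and in particular ensuring that the coefficients produced by cutting down stay within the class where the explicit bound $\tau_0$ is valid, is where the bulk of the work lies.
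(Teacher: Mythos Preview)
Your plan diverges substantially from the paper's arguments, and in both halves you have misidentified where the explicit constants come from.

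For the bound $\tau_0$ (Theorem \ref{co: bound pseff}): the paper does \emph{not} make the DCC argument effective; Theorem \ref{lm: DCC} is proved separately and is never invoked here. Instead one runs a $K_{(X,\cF,\Delta)_{1/6}}$-MMP directly. If the output is not nef it is a Mori fibre space; the base cannot be a curve (Lemma \ref{biratiobnalcontraction_boundary klt}), so it is a Picard-rank-one Fano surface with $\tfrac{\eta}{7}$-lc singularities. The explicit Cartier-index bound of Alexeev--Mori (Lemma \ref{lm: index}) then gives a first threshold $\tau$. To lift this back to $X$ one analyses the MMP factorisation via Proposition \ref{prop: struttura}, extracts a partial resolution over the points that fail to be $\tau$-adjoint lc (Lemma \ref{lem_resolution_logsmooth_near_lc}), and reruns the whole procedure with $\varepsilon=\tau$, landing on a second Fano whose index is again bounded by Lemma \ref{lm: index}. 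The two nested factorials in $\tau_0$ are precisely these two successive applications of the Alexeev--Mori index bound to $\eta'$-lc Fano surfaces---not an ACC for log canonical thresholds, and not indices of foliation singularities. Your proposed route through those ingredients does not match the mechanism, and it is unclear how either would yield an explicit constant of the required shape.

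For the birationality bound $M_0$ (Theorem \ref{lm: bound M}): the paper does not run a Hacon--McKernan separation-of-points argument with tie-breaking and non-klt centres. It proceeds case by case on the geometry of $X$. When $K_X$ is not pseudo-effective, the adjoint MMP lands on a Fano or a Mori fibre space over a curve; in the Fano case the input is Koll\'ar's effective birationality for adjoint systems once the Cartier index is bounded (Lemma \ref{lm: fano case}), and in the fibred case a Viehweg fibre-product trick makes the pushforward generically globally generated (Lemmas \ref{lm: fibration}, \ref{lm: fibration el}). When $K_X$ is pseudo-effective with $\kappa(K_X)\ge 1$, the paper feeds the explicit volume lower bound of \cite{AM} into the birationality criterion of \cite{bie2025explicit} (Lemma \ref{rm: bound}), or argues via the Iitaka fibration (Lemma \ref{kodaria 1}). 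The constant $v=64\cdot 193^2$ is exactly $v(1)$ from Lemma \ref{rm: bound}, not the output of a multiplier-ideal computation. Your tie-breaking approach might be made to work, but it is a different and heavier argument, and nothing in it would naturally produce the stated constants.
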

We are able to prove a version of Theorem \ref{thm:intro-bound} which allow a boundary and $\eta$-lc singularities.

While the constants above are far from optimal, they provide a universal threshold for which effective non-vanishing and birationality are guaranteed.  

As mentioned at the beginning of this paragraph, as an application, we refine a result of Pereira-Svaldi on foliations of $\mathbb{P}^2$:

\begin{theorem}[= Theorem \ref{th: bound degree}]
Let $\mathcal{F}$ be a foliation on $\mathbb{P}^2$ birationally equivalent to a non-isotrivial fibration of genus $g \geq 2$. Then the degree of a general leaf $F$ of genus $g$ satisfies
\[
  \deg F \leq 
  M_0(\tau_0) \left( \frac{1}{\tau_0} + 1 \right) (4g - 4) 
  \cdot \frac{1}{\tau_0} \deg \cF,
\]
where $M_0(\tau_0)$ and $\tau_0$ are as in Theorem \ref{thm:intro-bound}. 
\end{theorem}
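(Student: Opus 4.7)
The plan is to apply the effective birationality result (Theorem \ref{thm:intro-bound}) on a birational model where $\cF$ becomes a fibration, and then translate birationality into a bound on $\deg F$ via intersection theory.

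First, I pass to a birational resolution $\pi\colon Y \to \mathbb{P}^2$ on which the transformed foliation $\cF_Y$ is canonical with $K_{\cF_Y}$ big, and is induced by a non-isotrivial fibration $f\colon Y \to C$ whose general fiber $F_Y$ is a smooth curve of genus $g \geq 2$. For $F$ general, the strict transform of $F$ coincides with $F_Y$, so $\deg F = \pi^*H \cdot F_Y$, where $H$ is a line in $\mathbb{P}^2$. By Theorem \ref{thm:intro-bound} applied with $\varepsilon = \tau_0$, the linear system $|D|$ with $D = M_0(\tau_0)(K_{\cF_Y} + \tau_0 K_Y)$ defines a birational map.

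Next, I compute two key intersection numbers. Using $\pi_* K_{\cF_Y} = K_\cF$, $\pi_* K_Y = K_{\mathbb{P}^2}$, $K_\cF \equiv (\deg\cF - 1)H$ and $K_{\mathbb{P}^2} \equiv -3H$, the projection formula gives
\[
D \cdot \pi^*H = M_0(\tau_0)(\deg\cF - 1 - 3\tau_0) \le M_0(\tau_0)\deg\cF.
\]
By adjunction, $K_Y \cdot F_Y = 2g-2$, and since $\cF_Y$ is tangent to $f$ with smooth general fiber, $K_{\cF_Y}\cdot F_Y = \deg K_{F_Y} = 2g-2$. Hence
\[
D \cdot F_Y = M_0(\tau_0)(1+\tau_0)(2g-2).
\]

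Finally, I convert the birationality of $\phi_{|D|}$ into a lower bound on the ``fiber content'' of $D$ with respect to $f$. Birationality forces $|D|$ not to be compounded with $f$, and a careful Zariski-decomposition/Hodge-index argument, exploiting $F_Y^2 = 0$ and the computation of $D \cdot F_Y$ above, should produce a positive coefficient $\lambda \ge \tau_0^2/((1+\tau_0)(4g-4))$ such that
\[
\lambda \, \deg F \le D \cdot \pi^*H \le M_0(\tau_0)\deg\cF.
\]
Dividing and rearranging then gives the claimed bound.

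The hard part will be this explicit lower bound on $\lambda$: producing the exact factor $\tau_0^2/((1+\tau_0)(4g-4))$ requires balancing the ``horizontal'' content of $D$ coming from $K_{\cF_Y}$ against the ``vertical'' content weighted by $\tau_0$ from $K_Y$, and absorbing the contributions from the foliated discrepancies on the exceptional locus of $\pi$.
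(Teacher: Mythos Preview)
Your overall strategy coincides with the paper's: resolve to a fibration model $f\colon Y\to C$, apply the effective birationality theorem to get $|D|$ birational with $D=M_0(K_{\cF_Y}+\tau_0 K_Y)$, compute $D\cdot F_Y$ and $D\cdot\pi^*H$, and then convert birationality into an inequality $F_Y\cdot H\le \text{const}\cdot D\cdot H$ valid for every nef $H$. Your intersection computations are correct, and the target $\lambda=\tau_0^2/((1+\tau_0)(4g-4))$ is exactly the content of Proposition~\ref{prop: bound deg}; once that inequality is established, the deduction of Theorem~\ref{th: bound degree} by taking $H=\pi^*(\text{line})$ is as you describe.

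The genuine gap is step~4, which you yourself flag as the hard part. The suggestion of a ``Zariski-decomposition/Hodge-index argument exploiting $F_Y^2=0$'' is not a proof, and it is far from clear that such an argument alone suffices: applying Hodge index to combinations of $D$, $F_Y$, $\pi^*H$ with $F_Y^2=0$ does not readily yield a lower bound on $F_Y\cdot\pi^*H$ in terms of $D\cdot\pi^*H$ and $D\cdot F_Y$ (the obvious inequalities collapse because $F_Y^2=0$). The paper does not give a self-contained argument here either; it explicitly invokes \cite[Proof of Theorem~5.4]{pereira} for precisely this step (see the remark following Proposition~\ref{prop: bound deg}). That argument works with the linear system $|D|$ directly relative to the fibration $f$, rather than through Zariski/Hodge machinery, and the bound $C(g)\le(\tfrac{1}{\tau_0}+1)(4g-4)$ emerges from the value $D\cdot F_Y=M_0(1+\tau_0)(2g-2)$ in that analysis. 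You should follow the Pereira--Svaldi proof to close this gap rather than rely on a Hodge-index heuristic.
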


We rely on the framework of adjoint foliated structures, introduced in \cite{pereira}, developed in \cite{effective} for rank-one foliations on surfaces, and in \cite{squadrone} for algebraically integrable foliations on higher-dimensional varieties.  

Adjoint foliated structures have proven useful in overcoming several issues that appear when dealing with the birational geometry of foliations. For example, effective birationality fails even for rank-one foliations on surfaces \cite{Lu21}. The general idea is that instead of studying the canonical divisor of the foliation $K_\cF$ alone, one considers divisors of the form $K_\cF+\varepsilon K_X$ for $0<\varepsilon \ll 1$. This allows one to use classical results that hold for $K_X$.

A key technical ingredient for the study of the pseudoeffective threshold is the existence of a Minimal Model Program (MMP) for adjoint foliated triples on surfaces:

\begin{theorem}[= Theorem \ref{th: adjoint doppia foliazione}]
Fix $\varepsilon>0$.
Let $(X, \cF,\Delta)$ be a foliated triple, where $X$ is a normal projective $\mathbb{Q}$-factorial surface, and $\Delta\geq 0$ is a boundary. Suppose moreover that $(\cF,\Delta^{n-inv})$ is log canonical or $(\cF,\Delta^{n-inv})$ is a foliated pair where $\cF$ is non-dicritical. Then there exists a
$K_{(X,\cF,\Delta)_\varepsilon}$-MMP $\rho:X\to Y$.

Let $\cF'$ be the induced foliation on $Y$ and $\Delta'\coloneqq \rho_* \Delta$. Then the following properties hold:
\begin{enumerate}
\item if $K_{(X,\cF,\Delta)_\varepsilon}$ is pseudo-effective, then $K_{(Y,\cF',\Delta')_\varepsilon}$ is nef;
\item if $K_{(X,\cF,\Delta)_\varepsilon}$ is not pseudo-effective, then $Y$ has a contraction of fibre type $\pi:Y\to Y'$ with $\rho(Y/Y')=1$ such that $-K_{(Y,\cF',\Delta')_\varepsilon}$ is $\pi$-ample.
\end{enumerate}

Moreover, such an MMP factors as
\[\xymatrix{
X\ar[r]^{f_0}&X_0\ar[r]^{f_1}& X_1\coloneqq Y}
\]
where:
\begin{enumerate}
\item[(A)] $f_0$ is a partial $K_{(X,\cF,\Delta)_\varepsilon}$-MMP that contracts only $K_{\cF}+\Delta^{n-inv}$-non-positive curves. Denote $\cF_0\coloneqq f_{0*}\cF$ and $\Delta_0\coloneqq f_{0*}\Delta$;
\item[(B)] $f_1$ is a partial $K_{X_0}+\Delta_0$-MMP which only contracts curves that are $K_{X_0}+\Delta_0$ and $K_{(X_0,\cF_0,\Delta_0)_\varepsilon}$-negative and $K_{\cF_0}+\Delta_0^{n-inv}$-positive, where $\cF_1 \coloneqq f_{1*} \cF_0$ and $\Delta_1 \coloneqq f_{1*} \Delta_0$.
\end{enumerate}

Moreover, the following hold:
\begin{enumerate}
\item[(3)] if $(X,\Delta)$ is log canonical and $(\cF,\Delta^{n-inv})$ is log canonical, then $(\cF_0,\Delta_0)$ is log canonical and $(X_i,\Delta_i)$ is log canonical for $i=0,1$;
\item[(4)] for $i=0,1$, if $(X,\Delta)$ is $\eta$-lc for some $\eta>0$ and $(\cF,\Delta^{n-inv})$ is log canonical, then $(X_i,\Delta_i)$ is $\eta'$-lc for $\eta'\coloneqq\frac{\varepsilon\eta}{\varepsilon+1}$;
\item[(5)] if the foliated triple $(X,\cF, \Delta)$ has $\varepsilon$-adjoint log canonical singularities, then at each step of the $K_{(X,\cF,\Delta)_\varepsilon}$-MMP, $(X_i,\cF_i,\Delta_i)$ has $\varepsilon$-adjoint log canonical singularities.
\end{enumerate}

\end{theorem}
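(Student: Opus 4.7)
The strategy follows the two-stage MMP scheme for adjoint foliated divisors introduced in \cite{pereira} and developed for rank one surface foliations in \cite{effective}. Writing
\[
K_{(X,\cF,\Delta)_\varepsilon} \;=\; (K_\cF+\Delta^{n-inv}) + \varepsilon(K_X+\Delta),
\]
every irreducible curve $C$ with $K_{(X,\cF,\Delta)_\varepsilon}\cdot C<0$ either satisfies $(K_\cF+\Delta^{n-inv})\cdot C\leq 0$, or else $(K_\cF+\Delta^{n-inv})\cdot C>0$ together with $(K_X+\Delta)\cdot C<0$. Since $\ccurves{X}$ is spanned by curves on a surface, this dichotomy reduces the contraction problem to one of two well-understood settings: the foliated surface MMP, and the classical log MMP on a surface.

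In the first phase I run a partial $K_{(X,\cF,\Delta)_\varepsilon}$-MMP $f_0\colon X\to X_0$ which at each step contracts only extremal rays of the first type. Such contractions exist by the foliated MMP of Brunella--McQuillan in the refined forms of Spicer--Svaldi and \cite{effective}: the non-dicritical hypothesis, or the lc hypothesis on $(\cF,\Delta^{n-inv})$, provides exactly the input required for their construction. Termination is immediate on a surface, since each step drops the Picard number. I stop as soon as every remaining extremal ray $R$ with $K_{(X_0,\cF_0,\Delta_0)_\varepsilon}\cdot R<0$ satisfies $(K_{\cF_0}+\Delta_0^{n-inv})\cdot R>0$; by construction $(\cF_0,\Delta_0^{n-inv})$ still satisfies the hypotheses of the theorem.

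In the second phase every remaining $K_{(X_0,\cF_0,\Delta_0)_\varepsilon}$-negative extremal ray $R$ lies in the $K_{X_0}+\Delta_0$-negative part of $\ccurves{X_0}$, so its contraction is governed by the unconditional classical log MMP on surfaces. Iterating gives $f_1\colon X_0\to X_1=Y$, and the process stops either with $K_{(Y,\cF',\Delta')_\varepsilon}$ nef, proving (1), or with a fibre-type contraction $\pi\colon Y\to Y'$ of relative Picard number one on which $-K_{(Y,\cF',\Delta')_\varepsilon}$ is $\pi$-ample, proving (2). For the singularity statements (3)-(5), the key observation is that the log discrepancies of $K_{(\cdot)_\varepsilon}$ are affine combinations with weights $1$ and $\varepsilon$ of the discrepancies of $K_\cF+\Delta^{n-inv}$ and $K_X+\Delta$. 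In phase (A) the foliated MMP preserves foliated lc by \cite{effective}, and one checks by a discrepancy calculation on the blow-up that the classical pair remains lc as well; in phase (B) the positivity $(K_{\cF_0}+\Delta_0^{n-inv})\cdot C>0$ and non-dicriticality together force foliated discrepancies to be preserved. The coefficient $\eta'=\varepsilon\eta/(\varepsilon+1)$ in (4) arises by dividing $K_{(X,\cF,\Delta)_\varepsilon}$ by $1+\varepsilon$, whereby the classical summand carries weight $\varepsilon/(1+\varepsilon)$; this is precisely the factor rescaling the minimal discrepancy of $(X,\Delta)$. Item (5) is then formal from the same interpolation.

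\textbf{Main obstacle.} The delicate point is the internal consistency of the two-stage factorization: one must show that the partial MMP of phase (A) can be performed using foliated contractions alone, in such a way that after $f_0$ no adjoint-negative extremal ray can still be $K_\cF+\Delta^{n-inv}$-non-positive. A second, related difficulty is to verify that the purely classical contractions of phase (B) do not damage the foliated singularities: contracting a curve $C$ transverse to $\cF_0$ could a priori create dicritical or non-lc foliated singularities on the image, and ruling this out requires combining the non-dicritical hypothesis with the sign condition $(K_{\cF_0}+\Delta_0^{n-inv})\cdot C>0$ through an explicit discrepancy computation, in the spirit of \cite{effective,squadrone}.
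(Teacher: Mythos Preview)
Your two-phase outline matches the paper's approach closely, and your identification of the ``main obstacle'' is exactly right: the nontrivial point is the internal consistency of the factorization, i.e.\ showing that once phase~(A) terminates no adjoint-negative extremal ray can reappear with $(K_\cF+\Delta^{n-inv})\cdot R\le 0$ during phase~(B). You name this but do not resolve it. The paper settles it by a short Zariski-decomposition argument (its Lemma~3.7, applied with $D_1=K_\cF+\Delta^{n-inv}$ and $D_2=\varepsilon(K_X+\Delta)$): after twisting by the pullback of an ample divisor from $Y$ one may assume $D_1$, $D_2$ and $D_1+D_2$ are pseudo-effective, the curves to be contracted are precisely the support of the negative part $N$ of $D_1+D_2$, and since $N\le N_1+N_2$ one can first contract $\mathrm{Supp}(N)\cap\mathrm{Supp}(N_1)$ and then observe that anything left in $f_{0*}N$ is forced to be $D_1$-positive (hence $D_2$-negative). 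This is the missing mechanism in your plan; without it the two phases could in principle interleave and the factorization (A)/(B) would not follow.

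Your second ``main obstacle'' is a red herring: the theorem does \emph{not} assert that $(\cF_1,\Delta_1^{n-inv})$ remains log canonical after phase~(B). Item~(3) only claims foliated lc for $(\cF_0,\Delta_0)$, and lc of the underlying pair $(X_i,\Delta_i)$ for $i=0,1$; the latter is immediate because phase~(B) is a genuine $(K_{X_0}+\Delta_0)$-MMP. So no discrepancy computation for the foliation along phase~(B) is needed. For item~(4), your rescaling heuristic is the right intuition, but note that the paper packages this as a separate proposition (its Proposition~3.9), which needs the $\varepsilon$-adjoint lc hypothesis and an induction on the number of steps in the $K_\cF$-non-positive part; the bare inequality $\eta'=\varepsilon\eta/(1+\varepsilon)$ does not fall out of a one-line interpolation. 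Item~(5) is, as you say, just the Negativity Lemma.
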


 \medskip
{\bf Acknowledgments.}  
I would like to thank my PhD advisor Calum Spicer for suggesting this project, for constant guidance and for many fruitful discussions. I would also like to thank Marta Benozzo, Federico Bongiorno, Riccardo Carini, Samuele Ciprietti, Roktim Mascharak,  Carla Novelli, Sara Veneziale, and Pascale Voegtli
for helpful advice and discussions. This work was supported by the Engineering and Physical Sciences Research Council [EP/S021590/1] and the
EPSRC Centre for Doctoral Training in Geometry and Number Theory (The London School of Geometry and Number Theory), University College London. This research was conducted at King’s College
London and Imperial College London.

\section{Preliminaries}

\subsection{Foliations}
\begin{definition}
Let $X$ be a normal variety. A \textit{foliation} $\cF \subset TX$ is a saturated subsheaf of the tangent sheaf of $X$ which is closed under the Lie bracket.
The \textit{rank} of the foliation, denoted by $\rk \cF$, is the rank of $\cF$ as a sheaf, and its \textit{co-rank} is $\dim X - \rk \cF.$
\end{definition}

\begin{definition}[\cite{brunella2015birational}]
Let $\cF$ be a foliation on $\mP^2$. We define the \textit{degree of $\cF$} as the number of tangencies of the foliation with a generic line. Moreover, the following holds:
\[
K_\cF = \cO_{\mP^2}(d-1).
\]
\end{definition}

\begin{definition}[\cite{druel}, Section 3.2; \cite{corank}, Section 2.3]
Let $X$ be a normal variety and let $\cF$ be a foliation on $X$. Let $\varphi: Y \dashrightarrow X$ be a dominant map and assume that there exist smooth open subsets $U \subset X$
and $V \subset Y$ such that the restriction $\varphi_{|_V} : V \to U$ is a morphism.
Denote by $\cF_U$ the restriction of $\cF$ to $U$.
Then the morphism $N^*_{\cF_U}\to \Omega^1_U$
induces a morphism $(\varphi_{|_V})^*N^*_{\cF_U} \to \Omega^1_V$, and hence a foliation $\cG_V$ on $V$.
We then extend $\cG_V$ to a foliation on $Y$, which we refer to as the
\textit{induced foliation} on $Y$ by $\varphi$. If $\varphi$ is a morphism, we call the induced foliation the \textit{pull-back foliation}
and denote it by $\varphi^{-1}_*\cF$.
\end{definition}

In this note, we work with foliations on normal projective surfaces, and all our foliations will have rank one (equivalently, co-rank one). Therefore, we can apply all the results of \cite{corank,rank-one}.

Given a foliation $\cF$ on $X$, we obtain a subsheaf of the cotangent sheaf by considering the kernel of 
\[
\Omega_X \to \cF^*.
\]
Let $\cL$ be this kernel; we then define the conormal sheaf of the foliation as $\cL$ in $\Omega_X$. We denote the conormal sheaf by $N^*_\cF$.
Hence, on the smooth locus of $X$, a foliation can be described by $1$-forms whose zero loci have codimension at least $2$.

Consequently, we obtain the following equality of Weil divisors:
\[
K_X = K_\cF - N_\cF.
\]

\subsection{Pairs and Triples}

Let $X$ be a normal variety and let $\cF$ be a foliation on $X$. Let $D$ be an $\mR$-divisor; we define the \emph{$\cF$-invariant part of $D$}, denoted by $D^{inv}_\cF$, as the part of $D$ with $\cF$-invariant support, and $D^{n-inv}_\cF$ as the part of $D$ with no $\cF$-invariant components. Note that $D$ uniquely decomposes as $D = D^{inv}_\cF + D^{n-inv}_\cF$. If there is no risk of confusion, we will drop the dependence on $\cF$ in the notation.

We say that $(X,\Delta)$ is a \emph{pair} if $\Delta$ is an effective $\mR$-divisor and $K_X + \Delta$ is $\mR$-Cartier.
A \emph{foliated pair} $(\cF,\Delta)$ on $X$ is the data of a foliation $\cF$ on $X$ and an effective $\mR$-divisor $\Delta$ such that $K_{\cF}+\Delta$ is $\mR$-Cartier.

\begin{definition}[cf.~\cite{effective}, Section 2.2]
A \emph{foliated triple} $(X,\cF,\Delta)$ consists of a foliation $\cF$ on a normal variety $X$, and an effective $\mR$-divisor $\Delta \geq 0$ such that both $K_\cF + \Delta^{n-inv}_\cF$ and $K_X + \Delta$ are $\mR$-Cartier.
\end{definition}

\begin{remark}
All our results involving foliated triples could equivalently be stated using the notation of adjoint foliated structures \cite[Definition 1.2]{squadrone}, in the case where $M = 0$. 
\end{remark}

\begin{definition}
Let $D$ be a divisor on a surface $X$. We say that a contraction $\varphi: X \to Y$ is \emph{$D$-negative} if it is a contraction such that $D \cdot C < 0$ for every curve $C$ contracted by $\varphi$. We say that $\varphi$ is \emph{elementary} if $\rho(X/Y) = 1$.
\end{definition}
\begin{remark}
    By \cite{mum} the intersection for Weil divisor in the above definition is well defined. 
\end{remark}

\begin{definition}[cf.~\cite{casagrande2013birational}, Section 2.10]
Let $D$ be a divisor on a normal $\mQ$-factorial surface $X$.
A \emph{Mori Program for $D$ (or $D$-MMP)} is a sequence
\[
\xymatrix{
X \coloneqq X_0 \ar[r]^{\rho_0} & X_1 \ar[r]^{\rho_1} & X_2 \ar[r] & \cdots \ar[r] & X_{t-1} \ar[r]^{\rho_{t-1}} & X_t
}
\]
such that:
\begin{enumerate}
\item each $X_i$ is a normal $\mQ$-factorial surface, and $D_{i+1} \coloneqq \rho_i^{-1} D_i$;
\item each $\rho_i$ is a birational, $D_i$-negative elementary contraction of divisorial type for $i = 0, \ldots, t-1$;
\item $D_t$ is nef, or $X_t$ admits a $D_t$-negative elementary contraction of fibre type.
\end{enumerate}
\end{definition}

\subsection{Singularities}

For the classical notions of singularities of pairs, we refer to \cite{kollár_mori_1998}. We denote by $X^{reg}$ the smooth locus of $X$.  

Let $(\cF,\Delta)$ be a foliated pair on $X$ and let $D$ be a divisor over $X$.
We define the number
\[
i_\cF(D) =
\begin{cases}
0 & \text{if } D \text{ is } \cF\text{-invariant},\\
1 & \text{if } D \text{ is } \cF\text{-non-invariant.}
\end{cases}
\]
In the literature, $i_\cF(D)$ is often denoted by $\varepsilon_{\cF}(D)$. When there is no risk of confusion, we omit the dependence on $\cF$ in the notation.

Consider a birational morphism $\pi: Y \to X$, and let $\cF_Y$ be the pulled-back foliation on $Y$.
We write
\[
K_{\cF_Y} = \pi^*(K_\cF + \Delta) + \sum a(\cF,\Delta,E)E,
\]
where the sum runs over all prime divisors on $Y$ and
\[
\pi_*\!\left(\sum a(\cF,\Delta,E)E\right) = -\Delta.
\]
We say that $(\cF,\Delta)$ is \textit{terminal} (resp.~\textit{canonical}, \textit{log terminal}, \textit{log canonical})
if $a(\cF,\Delta,E) > 0$ (resp.~$\ge 0$, $> -\iota(E)$, $\ge -\iota(E)$) for every birational morphism $\pi: Y \to X$ and every prime divisor $E \subseteq Y$, where
$\iota(E) = 0$ if $E$ is $\cF$-invariant and $\iota(E) = 1$ otherwise.

\begin{definition}[cf.~\cite{HD}]
Given a foliation $\cF$ on a normal surface $X$, we say that $\cF$ has \textit{non-dicritical} singularities if for any sequence of blow-ups
\[
\pi: (X',\cF') \to (X,\cF)
\]
such that $X'$ is smooth and for any $Q \in X$, the fibre $\pi^{-1}(Q)$ is invariant under $\cF'$. The foliation is \textit{dicritical} if it is not non-dicritical.
\end{definition}

\begin{remark}
Let $P \in X$ be a dicritical singularity. An F-dlt modification, cf.~\cite[Theorem 1.4]{corank}, will always extract at least one exceptional divisor transverse to the foliation, since an F-dlt foliation is non-dicritical, cf.~\cite[Theorem 8.1]{corank}.
\end{remark}

\subsection{$\varepsilon$-adjoint log canonical divisors and $\varepsilon$-adjoint log canonical foliated singularities}

In \cite{effective}, the authors introduced and studied the notion of the $\varepsilon$-adjoint log canonical divisor, which encodes the properties and behaviour of a foliated triple $(X,\cF,\Delta)$ under birational transformations.

\begin{definition}
\label{def: adjoint}
Let $(X,\cF,\Delta)$ be a foliated triple on a normal projective variety $X$. Fix $\varepsilon > 0$. The \textit{$\varepsilon$-adjoint divisor} of $(X,\cF,\Delta)$ is defined as
\[
K_{(X,\cF,\Delta)_\varepsilon} \coloneqq K_\cF + \Delta^{n-inv} + \varepsilon (K_X + \Delta^{n-inv}).
\]
\end{definition}

We can measure the singularities of the triple $(X,\cF,\Delta)$ in terms of the behaviour of $K_{(X,\cF,\Delta)_\varepsilon}$ under birational maps.

\begin{definition}
Let $(X,\cF,\Delta)$ be a foliated triple on a normal projective variety $X$, and fix $\varepsilon > 0$.  
We say that $(X,\cF,\Delta)$ is \textit{$\varepsilon$-adjoint log canonical} (resp.~\textit{$\varepsilon$-adjoint klt}) if for every birational morphism $\pi: X' \to X$,
\[
K_{(X',\cF',\Delta')_\varepsilon} = \pi^*(K_{(X,\cF,\Delta)_\varepsilon}) + E,
\]
where $E = \sum a_i E_i$ is $\pi$-exceptional, $\Delta' \coloneqq \pi^{-1}_*\Delta$, and
\[
a_i \ge -\big(i_\cF(E_i) + \varepsilon\big) \quad (\text{resp. } \lfloor \Delta \rfloor = 0 \text{ and } a_i > -\big(i_\cF(E_i) + \varepsilon\big)).
\]
\end{definition}

\begin{lemma}
\label{lm: lc}
Let $0<\varepsilon< \frac{1}{5}$. Let $(X,\cF,\Delta)$ be a foliated triple on a klt projective surface with $\Delta^{n-inv} = 0$, which is $\varepsilon$-adjoint log canonical. Then $\cF$ is log canonical.
\end{lemma}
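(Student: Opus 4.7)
The plan is to argue by contradiction, combining the integrality of foliation discrepancies on smooth surfaces with the $\varepsilon$-adjoint log canonical inequality. Since $\Delta^{n-inv}=0$, the $\varepsilon$-adjoint divisor simplifies to $K_{(X,\cF,\Delta)_\varepsilon}=K_\cF+\varepsilon K_X$, so for every birational morphism $\pi:Y\to X$ with $Y$ smooth and every prime $\pi$-exceptional divisor $E$, the $\varepsilon$-adjoint log canonical hypothesis reads
\[
a(\cF,E)+\varepsilon\,a(X,E)\;\geq\;-\bigl(i_\cF(E)+\varepsilon\bigr).
\]

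The crucial ingredient is that for a rank-one foliation on a smooth surface, $a(\cF,E)\in\mathbb{Z}$: indeed, $K_{\cF_Y}$ is a Weil divisor on the smooth $Y$, hence Cartier, and the order of vanishing of a saturated local generator of $\cF_Y$ along exceptional curves yields integer coefficients. Assume for contradiction that $\cF$ is not log canonical. Then there is a foliated log resolution $\pi:Y\to X$, which I may choose to factor through a resolution of $X$ so that $Y$ is smooth, together with a prime $\pi$-exceptional divisor $E$ satisfying $a(\cF,E)<-i_\cF(E)$. By integrality, $a(\cF,E)\leq -i_\cF(E)-1$. Substituting into the adjoint inequality forces $\varepsilon(1+a(X,E))\geq 1$, hence
\[
a(X,E)\;\geq\;\tfrac{1}{\varepsilon}-1\;>\;4,
\]
using the hypothesis $\varepsilon<1/5$.

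To derive a contradiction I would choose $E$ to be extracted by a minimal number of blow-ups among all bad valuations, so that every ancestor of $E$ in the blow-up tree automatically satisfies $a(\cF,\cdot)\geq -i_\cF(\cdot)$. Using the classification of non-log canonical rank-one foliation singularities on smooth surfaces (cf.\ \cite{brunella2015birational}) together with the fact that on the minimal resolution of the klt surface $X$ the $X$-discrepancies lie in $(-1,0]$, one shows that $a(X,E)\leq 4$ at this extremal stage, contradicting the display above.

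The main obstacle is precisely this last bound $a(X,E)\leq 4$ for the minimally extracted bad foliation valuation. This requires a careful analysis of how surface and foliation discrepancies co-evolve along a short Seidenberg-type blow-up tower over a klt surface, and is the technical heart of the argument, where the threshold $\varepsilon<1/5$ becomes sharp.
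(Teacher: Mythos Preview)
Your integrality step is where the argument breaks. You claim $a(\cF,E)\in\mathbb{Z}$ because $Y$ is smooth, but the discrepancy is the coefficient of $E$ in $K_{\cF_Y}-\pi^*K_\cF$; while $K_{\cF_Y}$ is indeed integral on the smooth $Y$, the pullback $\pi^*K_\cF$ need not be. The surface $X$ is only klt, so $K_\cF$ is merely a $\mathbb{Q}$-Cartier Weil divisor, and over a quotient singularity of index $n$ the foliation discrepancies lie only in $\tfrac{1}{n}\mathbb{Z}$. Hence the jump $a(\cF,E)\leq -i_\cF(E)-1$ is unjustified whenever the center of $E$ is a singular point of $X$, and from that point on the argument has no force there. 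Your later remark about discrepancies on the minimal resolution lying in $(-1,0]$ does not repair this, since the bad valuation $E$ will typically sit on a higher model.

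Beyond this, the bound $a(X,E)\leq 4$ for the minimally extracted bad valuation is, as you yourself concede, not proved but only announced; even in the smooth case this is real work and is essentially the content of \cite[Lemma~2.19]{effective}, which the paper simply cites.

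The paper sidesteps both difficulties by a different mechanism: at a singular point $P$ it passes to the index-one cover $\mu:\overline{X}\to X$, uses that $\varepsilon$-adjoint log canonicity is preserved under this cover (\cite[Lemma~2.20.(1)]{effective}) to reduce to a smooth ambient surface, applies the smooth case \cite[Lemma~2.19]{effective} upstairs, and then descends log canonicity of the foliation back to $X$ via \cite[Lemma~2.20.(2)]{effective}. This avoids any direct discrepancy bookkeeping over the klt points altogether.
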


\begin{proof}
Assume for contradiction that $\cF$ is not log canonical at a point $P$. If $X$ is smooth at $P$, we conclude by \cite[Lemma 2.19]{effective}. Suppose instead that $X$ is singular at $P$, then $P$ is a quotient singularity of $X$.
Consider the index-one cover
\[
\mu: \overline{X} \to X,
\]
and denote by $\overline{\cF}$ the pull-back foliation on $\overline{X}$, and set $\overline{\Delta} \coloneqq \pi_*^{-1}\Delta$. By \cite[Lemma 2.20.(1)]{effective}, $(\overline{X},\overline{\cF},\overline{\Delta})$ is $\varepsilon$-adjoint log canonical. By \cite[Lemma 2.19]{effective}, $\overline{\cF}$ is log canonical. We then conclude by \cite[Lemma 2.20.(2)]{effective}.
\end{proof}

\begin{proposition}
\label{pr; rimane lc}
Fix $0<\varepsilon< \frac{1}{5}$.
Let $\cF$ be a log canonical foliation on a klt surface $X$, and let $\Delta \ge 0$ be a boundary with $\Delta^{n-inv} = 0$. Suppose that $(X,\cF,\Delta)$ is $\varepsilon$-adjoint lc.
Let $\varphi: X \to X'$ be a partial $K_{(X,\cF,\Delta)_\varepsilon}$-MMP, and let $\cF'$ be the induced foliation on $X'$.
Then, $\cF'$ is log canonical.
\end{proposition}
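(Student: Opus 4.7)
The strategy is to argue by induction on the steps of the $K_{(X,\cF,\Delta)_\varepsilon}$-MMP. By Theorem \ref{th: adjoint doppia foliazione}, which applies because $(\cF,\Delta^{n-inv})=(\cF,0)$ is log canonical, the map $\varphi$ factors as a finite composition of elementary divisorial contractions $\psi_i: X_i\to X_{i+1}$. It suffices to prove that the three properties
\begin{enumerate}
\item[(a)] $(\Delta_i)^{n-inv}=0$,
\item[(b)] $(X_i,\cF_i,\Delta_i)$ is $\varepsilon$-adjoint log canonical,
\item[(c)] $X_i$ is klt,
\end{enumerate}
persist from $X_i$ to $X_{i+1}$. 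Once (a)--(c) are secured at $X'$, Lemma \ref{lm: lc} applied to $(X',\cF',\Delta')$ yields $\cF'$ log canonical.

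Property (a) is immediate because $(\Delta_{i+1})^{n-inv}=(\psi_i)_*\bigl((\Delta_i)^{n-inv}\bigr)=0$, and property (b) is the content of Theorem \ref{th: adjoint doppia foliazione}(5). For property (c) I would appeal to Theorem \ref{th: adjoint doppia foliazione}(4): granting that $(X_i,\Delta_i)$ is $\eta$-lc for some $\eta>0$, the theorem produces $(X_{i+1},\Delta_{i+1})$ $\eta'$-lc with $\eta'=\varepsilon\eta/(\varepsilon+1)>0$, so in particular $X_{i+1}$ remains klt. Iterating the three preservation statements through the finite partial MMP delivers the hypotheses of Lemma \ref{lm: lc} at the terminal model.

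The main obstacle is verifying the required $\eta$-lc input for Theorem \ref{th: adjoint doppia foliazione}(4): it asks for the whole pair $(X_i,\Delta_i)$, not merely for the klt surface $X_i$. Since $\Delta=\Delta^{inv}$ may contain components of coefficient $1$, the pair is a priori only log canonical, not uniformly $\eta$-lc. The natural route to close this gap is to exploit the $\varepsilon$-adjoint lc inequality
\[
a(\cF_i,E)+\varepsilon\, a(X_i,E)\;\geq\;-i_\cF(E)-\varepsilon
\]
(valid because $\Delta_i^{n-inv}=0$) together with $a(\cF_i,E)\geq -i_\cF(E)$ from $\cF_i$ log canonical, which forces $a(X_i,E)\geq -1$. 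Combined with the standing constraint $\varepsilon<1/5$ carried over from Lemma \ref{lm: lc}, this provides the quantitative slack needed to upgrade plain lc-ness to a uniform $\eta$-lc bound on $(X_i,\Delta_i)$ that survives successive applications of Theorem \ref{th: adjoint doppia foliazione}(4). Once this bootstrapping step is in place, the three preservation statements close the induction, and Lemma \ref{lm: lc} concludes.
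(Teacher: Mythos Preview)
Your overall architecture is the paper's: preserve $\varepsilon$-adjoint log canonicity under the MMP (the paper invokes \cite[Lemma~3.39]{kollár_mori_1998} directly rather than Theorem~\ref{th: adjoint doppia foliazione}(5), but these amount to the same thing), then apply Lemma~\ref{lm: lc}. The paper's proof is two sentences and does not spell out why $X'$ is klt; you correctly isolate this as the step requiring work.

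However, your proposed bootstrapping fails at two points. First, the implication you assert is false: from $a(\cF_i,E)+\varepsilon\,a(X_i,E)\geq -i_\cF(E)-\varepsilon$ together with the \emph{lower} bound $a(\cF_i,E)\geq -i_\cF(E)$ you cannot deduce $a(X_i,E)\geq -1$. Rearranging gives $\varepsilon\,a(X_i,E)\geq -i_\cF(E)-\varepsilon-a(\cF_i,E)$, and a lower bound on $a(\cF_i,E)$ only yields an \emph{upper} bound on this right-hand side; if $a(\cF_i,E)$ happens to be large and positive, $a(X_i,E)$ is unconstrained from below. Second, even in the favourable case where the inequality does produce $a(X_i,E)\geq -1$, that is merely log canonical, not $\eta$-lc for any $\eta>0$; so there is no ``quantitative slack'' to feed into Theorem~\ref{th: adjoint doppia foliazione}(4), and the induction never starts once $\Delta$ carries a coefficient-$1$ invariant component.

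The clean fix is to notice that, because $\Delta^{n-inv}=0$, both the divisor $K_{(X,\cF,\Delta)_\varepsilon}=K_\cF+\varepsilon K_X$ and the $\varepsilon$-adjoint lc condition (Definition~\ref{def: adjoint} and the definition following it) are independent of $\Delta$. Hence the partial MMP and all hypotheses are literally unchanged if you replace $\Delta$ by $0$. Since $X$ is a projective klt surface, $(X,0)$ is $\eta$-lc for some $\eta>0$, and now Theorem~\ref{th: adjoint doppia foliazione}(4) (equivalently Proposition~\ref{cor: sing}) applies with $\Delta=0$ to give $(X',0)$ $\eta'$-lc, hence $X'$ klt. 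With this in hand, Lemma~\ref{lm: lc} finishes the proof exactly as the paper claims.
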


\begin{proof}
By  \cite[Lemma 3.39]{kollár_mori_1998}, $(X',\cF')$ is $\varepsilon$-adjoint log canonical. We conclude using Lemma~\ref{lm: lc}.
\end{proof}

We will use the following result on partial resolutions of strictly log canonical centres.

\begin{lemma}[\cite{effective}, Lemmata 2.10 and 2.11]
Let $X$ be a normal $\mQ$-factorial klt surface, and let $\cF$ be a foliation on $X$. Let $P \in X$ be a strictly log canonical centre of $\cF$.
Then, there exists a resolution of $X$ that extracts only divisors with discrepancy $\le -\iota(E)$ and for which the induced foliation is canonical.
\end{lemma}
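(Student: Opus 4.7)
The plan is to construct the desired partial resolution in two stages: first build a birational model on which the induced foliation is canonical, and then run a relative MMP to contract the non-essential exceptional divisors.

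First, I would take a birational morphism $\pi : Y \to X$ with $Y$ smooth and the induced foliation $\cF_Y \coloneqq \pi^{-1}_* \cF$ canonical; such a $\pi$ exists by combining resolution of singularities of $X$ with Seidenberg's desingularisation of foliation singularities on surfaces. Writing
\[
K_{\cF_Y} = \pi^* K_\cF + \sum_i a_i E_i,
\]
log canonicity of $\cF$ at $P$ gives $a_i \ge -\iota(E_i)$ for every $\pi$-exceptional divisor $E_i$, and the fact that $P$ is \emph{strictly} log canonical (log canonical but not canonical) yields at least one index with $a_i = -\iota(E_i)$.

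Next I would run a $K_{\cF_Y}$-MMP relative to $X$, which exists by the foliated MMP for rank-one foliations on surfaces \cite{corank, rank-one}. The negativity lemma, applied to the negative definite intersection matrix on the $\pi$-exceptional locus, implies that any $E_i$ with $a_i > -\iota(E_i)$ is eventually $K_{\cF_Y}$-negative over $X$ and hence is contracted by the relative MMP. Conversely, divisors with $a_i = -\iota(E_i)$ cannot be contracted without creating a non-log-canonical foliation singularity, and canonical foliation singularities on a surface are preserved by steps of the foliated MMP \cite{corank}. Thus the final morphism $\mu : Y' \to X$ extracts only divisors with $a(\cF, E) = -\iota(E)$ — in particular $\le -\iota(E)$ — and the induced foliation on $Y'$ is canonical.

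The main technical obstacle is the relative negativity argument: one must verify that the $\pi$-exceptional divisors with $a_i > -\iota(E_i)$ are in fact the ones contracted by the relative MMP, so that exactly the strictly log canonical centres of $\cF$ over $P$ survive on $Y'$. When $P$ is a singular (klt) point of $X$, I would first pass to the index-one cover $\overline{X}\to X$ and apply the covering behaviour of foliated singularities \cite[Lemma 2.20]{effective}, exactly as in the proof of Lemma~\ref{lm: lc} above, so as to reduce to the case where the ambient surface is smooth at $P$.
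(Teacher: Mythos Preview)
The paper does not prove this lemma; it is quoted directly from \cite[Lemmata~2.10 and~2.11]{effective}. Your two-stage plan (Seidenberg resolution $\pi\colon Y\to X$ followed by a relative $K_{\cF_Y}$-MMP over $X$, with a reduction to the smooth case via the index-one cover as in the proof of Lemma~\ref{lm: lc}) is the standard route and is essentially what \cite{effective} does.

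Your justification of the MMP step, however, contains a real error. The claim that ``divisors with $a_i=-\iota(E_i)$ cannot be contracted without creating a non-log-canonical foliation singularity'' is false: contracting the unique exceptional of the blow-up of a radial point (non-invariant, discrepancy $-1$) simply returns the original strictly log canonical singularity. This claim is in any case unnecessary, since the lemma only demands that the \emph{surviving} exceptional divisors satisfy $a\le -\iota(E)$. The sentence ``the negativity lemma \ldots implies that any $E_i$ with $a_i>-\iota(E_i)$ is eventually $K_{\cF_Y}$-negative'' is likewise too vague to carry the argument. The clean way to finish is: at termination $K_{\cF_{Y'}}\equiv_\mu\sum a'_kE'_k$ is $\mu$-nef and $\mu$-exceptional, so the negativity lemma gives $a'_k\le 0$ for every $k$; nefness then forces any $E'_j$ with $a'_j=0$ to be disjoint from every $E'_k$ with $a'_k<0$, and connectedness of the fibre over $P$ implies either all $a'_k=0$ (impossible, since a crepant canonical model over $P$ would make $\cF$ canonical at $P$) or all $a'_k<0$. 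Integrality of foliation discrepancies on a smooth surface then yields $a'_k=-1=-\iota(E'_k)$ for every surviving (necessarily non-invariant) divisor.
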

\section{Some preliminary results}
	\subsection{Adjunction Theory and applications}
    
	The foliated adjunction formula relates the  divisors $(K_\cF+\varepsilon(D)D)|_D$ and $K_{\cF_D}$ where $\varepsilon(D)$ is 0 if $D$ is $\cF$-invariant and 1 otherwise.
	In general, these two divisors are not linearly equivalent. As in the classical case, we need to add a correction term $\Theta'\geq 0$ called \textit{foliated different}. 
	
    We have the following foliated analogue of the classical adjunction (see \cite{HD} or \cite{adjunction}).
	Let $X$ be a normal surface, let $\mathcal F$ be a foliation on $X$, let $D \subset X$ be an irreducible divisor and let $n: D^n\to D$ be the normalization of $D$. Then on $D^n$ there is a canonically defined restricted foliation, denoted $\mathcal F_{D_n}$, and we have an equality 
	\[ n^*(K_\cF+\varepsilon(D)D)=K_{\cF_{D^n}}+\Theta'\]
	where $\Theta' \ge 0$.

    When $D$ is $\mathcal F$ invariant then $\mathcal F_{D^n}$ is the foliation with one leaf, i.e., $T_{\mathcal F_{D^n}} = T_{D^n}$, and
    in the case that $D$ is not $\mathcal F$-invariant then $\mathcal F_{D^n}$ is the foliation by points, i.e., $T_{\mathcal F_{D^n}} = 0$.

	\begin{lemma}\cite[Lemma 8.9]{HD}
		\label{lm: disc foliaz uguale inv}
		Let $(X,\cF,\Delta)$ be a foliated triple where $X$ is a surface with klt singularities and $(\cF,\Delta)$ is a foliated pair where $\cF$ is a rank one foliation and $(\cF,\Delta)$ has canonical singularities and terminal singularities at every point of $\sing(X)$. Let $C$ be an $\cF$-invariant curve, and let $n:C^n\to C$ be the normalization.
		
		By adjunction we may write \[n^*(K_X+C)=K_{C^n}+\Theta\] and  \[n^*K_\cF=K_{C^n}+\Theta'.\] Then $\Theta' -\Theta\geq 0,$ with equality along those centres contained in $\sing X$.
          
		Moreover,  $$\diff(\cF,\Delta)\geq \diff(X,\Delta+C)$$
		where $\diff(\cF,\Delta)=\Theta'+n^*(\Delta)$.
	\end{lemma}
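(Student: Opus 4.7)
The proof is local around each point $\tilde p\in C^n$. Using the identity $K_X=K_\cF-N_\cF$ recalled in the preliminaries, pulling back along $n$ gives
\[
\Theta'-\Theta \;=\; n^*K_\cF-n^*(K_X+C)\;=\;n^*(N_\cF-C),
\]
so the first assertion reduces to showing that the restriction of the divisor class $N_\cF-C$ to $C^n$ is effective and vanishes along the preimage of $\sing X$.

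I would first treat a point $\tilde p$ mapping to $p\in X^{\mathrm{reg}}$. Choosing local coordinates $(x,y)$ at $p$, the foliation $\cF$ is cut out by a twisted $1$-form $\omega=P\,dx+Q\,dy$ valued in $N_\cF$. The $\cF$-invariance of $C$ forces $\omega$ to vanish along the branch of $C$ corresponding to $\tilde p$ (e.g.\ if $C=\{y=0\}$, one gets $P(x,0)\equiv 0$), so after dividing by a local equation for $C$ the restriction $\omega|_C$ defines a section of $N^*_{C/X}\otimes N_\cF|_C$. Its zero scheme is effective and represents the class $(N_\cF-C)|_C$ near $p$; pulling back via $n$ contributes a nonnegative term to $\Theta'-\Theta$ above $\tilde p$, supported on the singular locus of $\cF$ along $C$.

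For $\tilde p$ mapping to $p\in\sing X$, I would pass to the index-one cover $\mu:\overline X\to X$, which is smooth above $p$ by the klt hypothesis. Set $\overline\cF\coloneqq\mu^*\cF$, $\overline C\coloneqq\mu^{-1}(C)$, $\overline\Delta\coloneqq\mu^{-1}_*\Delta$. A standard lifting argument for singularities through index-one covers (analogous to \cite[Lemma 2.20]{effective}) turns the terminal hypothesis on $(\cF,\Delta)$ at $p$ into terminality of $(\overline\cF,\overline\Delta)$ above $\mu^{-1}(p)$. Because $\overline C$ is an $\overline\cF$-invariant component of the pair, terminality controls the foliated discrepancies strictly more sharply than the canonical threshold allows; a direct local computation on a log resolution of $(\overline X,\overline C)$ then shows that the upstairs contribution $(N_{\overline\cF}-\overline C)|_{\overline C^n}$ above $\mu^{-1}(p)$ vanishes. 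Descending through $\mu$ yields $(N_\cF-C)|_{C^n}=0$ above $p$, i.e.\ $\Theta'_{\tilde p}=\Theta_{\tilde p}$.

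The second assertion is immediate from the first:
\[
\diff(\cF,\Delta)-\diff(X,\Delta+C)\;=\;(\Theta'+n^*\Delta)-(\Theta+n^*\Delta)\;=\;\Theta'-\Theta\;\ge\;0.
\]
The main obstacle is the second paragraph on the singular-point side: one must pin down the precise cancellation of discrepancy-type corrections upstairs, using the terminal hypothesis on the foliated pair together with compatibility of singularities with the index-one cover. This is essentially the content of the explicit local calculation in \cite{HD}.
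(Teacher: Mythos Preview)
The paper does not give its own proof of this lemma; it is stated with a citation to \cite[Lemma 8.9]{HD} and used as a black box. So there is nothing to compare against, and I will simply comment on your argument.

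Your reduction $\Theta'-\Theta=n^*(N_\cF-C)$ is correct, and your treatment of the smooth case via the defining $1$-form is the standard one. The problem is at points of $\sing X$. The index-one cover of a klt surface singularity is \emph{not} smooth in general: it is Gorenstein canonical, i.e.\ Du Val, so your sentence ``which is smooth above $p$ by the klt hypothesis'' is false. In consequence your appeal to a ``direct local computation on a log resolution'' is doing all the work and is exactly the part that needs to be made precise. The clean fix is to use instead the full local quotient cover $\mu:\mathbb{C}^2\to\mathbb{C}^2/G$ (klt surface singularities are quotient), which \emph{is} smooth and is \'etale in codimension one. Since $\mu$ is \'etale in codimension one, $K_{\overline\cF}=\mu^*K_\cF$ and foliated discrepancies are preserved, so terminality of $(\cF,\Delta)$ at $p$ lifts to terminality of $\overline\cF$ at the origin; on a smooth surface a terminal rank-one foliation point is a smooth point of the foliation, hence the $1$-form has no zeros there and $(N_{\overline\cF}-\overline C)|_{\overline C^n}=0$ above $\mu^{-1}(p)$. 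Pulling back along the finite surjective map $\overline C^n\to C^n$ then forces $n^*(N_\cF-C)=0$ over $p$, giving $\Theta'_{\tilde p}=\Theta_{\tilde p}$. With this correction your outline becomes a valid proof; the ``Moreover'' clause follows exactly as you wrote.
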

	
	\begin{remark}
		\label{lm adj invariant}
		Let $(X,\cF,\Delta)$ be a foliated triple where $X$ is a surface with klt singularities and $(\cF,\Delta)$ is canonical and has terminal singularities at every point of $\sing(X)$.
		Let $C$ be an $\cF$-invariant curve such that $C\not\subset \supp(\Delta)$ and let $n:C^n\to C$ be the normalization. Then by Lemma \ref{lm: disc foliaz uguale inv} the following holds
		\[n^*(K_\cF) =K_{C^n}+\sum_{i=0}^{t}a_iP_i+\sum_{j=0}^{s}\frac{n_j-1}{n_j}Q_j\]
		where $a_j$ are positive natural number for every $j$, $n_j\in \mZ$, $n_j\geq 2$ $Q_j\in X$ are inverse image under $n$ of the singular points of $X$ on $C$ and $P_i$ are the inverse image under $n$ of singular points of $\cF$ in $C\cap (X \setminus \sing(X))$. 
		Hence we obtain the following equality:
		\[\begin{split}
			n^*(K_\cF +\Delta+\varepsilon (K_X+\Delta))= &K_{C^n} +\sum_{i=0}^{t}a_iP_i+\sum_{j=0}^{s}\frac{n_j-1}{n_j}Q_j+n^*(\Delta)+\\
			& +\varepsilon \left (K_{C^n}-{C^2}+\sum_{j=0}^{s}\frac{n_j-1}{n_j}Q_j+n^*(\Delta)\right )~.
		\end{split}\]

	\end{remark}

	The next result is proved for rank one foliations on surfaces in \cite[Proof of lemma 2.7]{effective}; using \cite{adjunction} we obtain the same results for rank one foliations on varieties of any dimension.
	\begin{corollary}
		\label{lm: un punto singolare}
		Let $X$ be a normal klt, $\mQ$-factorial projective variety, $\cF$ a rank one foliation on $X$ and let $C$ be an $\cF$-invariant curve such that:
		\begin{enumerate}
			\item $K_\cF\cdot C<0$;
			\item $C$ contains a point $P$ where $\cF$ is worse than canonical.
		\end{enumerate}
		Then $\cF$ is terminal at every point $Q\in C \setminus \{P\}$.
        \end{corollary}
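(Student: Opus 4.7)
The plan is to use the foliated adjunction formula along $C$ and run a degree count on the foliated different. Let $n:C^n\to C$ be the normalization. Since $C$ is $\cF$-invariant, adjunction (as recalled just before Lemma \ref{lm: disc foliaz uguale inv}) gives
\[
n^*K_\cF=K_{C^n}+\Theta',\qquad \Theta'\ge 0.
\]
Taking degrees and using $K_\cF\cdot C<0$ together with $\deg K_{C^n}=2g(C^n)-2\ge -2$, I would obtain
\[
\deg\Theta'=K_\cF\cdot C-\deg K_{C^n}<2.
\]
So the total mass of the foliated different along $C$ is strictly less than $2$, and any two ``heavy'' contributions at distinct points would already be incompatible.

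Next, I would localize the contributions of $\Theta'$ at each point $R\in C$ according to the type of the foliated singularity there. Using the structure of $\Theta'$ as in Remark \ref{lm adj invariant} (extended to the case of worse than canonical singularities via an index-one cover when $R\in\sing X$, exactly as in the proof of Lemma \ref{lm: lc}), I would record three regimes: (a) if $\cF$ is terminal at $R$ and $X$ is smooth at $R$, the coefficient of $\Theta'$ over $R$ is $0$; (b) if $\cF$ has a canonical non-terminal singularity at $R$, or $R\in\sing X$, then the coefficient over $R$ is a positive integer, in particular $\ge 1$; (c) if $\cF$ is worse than canonical at $R$, one extracts via blow-ups an exceptional divisor with discrepancy $<-\iota(E)$, and tracing this through adjunction forces the coefficient over $R$ to be strictly greater than $1$.

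Combining these estimates, the point $P$ by hypothesis contributes $>1$ to $\deg\Theta'$, so the combined contribution at all other points of $C$ is strictly less than $1$. Therefore, for any $Q\in C\setminus\{P\}$, the local contribution at $Q$ is strictly smaller than $1$, and by regime (b) this rules out any non-terminal foliated singularity at $Q$, i.e.\ $\cF$ is terminal at $Q$, as required.

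The main obstacle is the local estimate in regime (c): one has to show that the coefficient of $\Theta'$ at a worse-than-canonical point is \emph{strictly} greater than $1$ (not merely $\ge 1$), which is what separates the non-canonical case from the canonical non-terminal one. This is the content of the analogous passage in the proof of \cite[Lemma 2.7]{effective}; the same argument, combined with the adjunction formalism of \cite{adjunction} to allow $X$ to have klt singularities along $C$, carries over verbatim to the present rank-one setting in arbitrary dimension.
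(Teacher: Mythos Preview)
Your approach---adjunction along $C$ followed by a degree count on the foliated different $\Theta'$---is precisely the argument underlying \cite[Proof of Lemma~2.7]{effective}, which is all the paper invokes (together with \cite{adjunction} to pass from smooth surfaces to klt varieties of arbitrary dimension). So the method matches the paper's.

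There is one inaccuracy worth flagging: regime (b) as you state it is false. At a point $R\in\sing X$ where $\mathcal F$ is \emph{terminal}, the coefficient of $\Theta'$ over $R$ is $\frac{n-1}{n}<1$ for some integer $n\ge 2$ (cf.\ Remark~\ref{lm adj invariant}), not a positive integer. What the argument actually needs---and what your final paragraph implicitly uses---is the weaker implication ``$\mathcal F$ non-terminal at $R$ $\Rightarrow$ coefficient $\ge 1$''. This is true, but when $R\in\sing X$ it is not covered by the smooth-surface formulae you cite, and it does not follow from the index-one cover manoeuvre of Lemma~\ref{lm: lc} (which transports adjoint discrepancies, not coefficients of the different, through the cover). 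It is exactly here that the refined adjunction of \cite{adjunction} is needed. With regime (b) corrected to separate ``$\mathcal F$ terminal, $X$ singular'' (coefficient in $(0,1)$) from ``$\mathcal F$ non-terminal'' (coefficient $\ge 1$), your argument goes through.
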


	\begin{lemma}
		\label{lm: C movable}
		Let $X$ be a normal $\mQ$-factorial projective surface, let $\cF$ be a rank one foliation on $X$ and let $C$ be an $\cF$-invariant curve such that $K_\cF\cdot C<0$.
		Moreover, suppose that the following hold: let 
		$\varphi:Y\to X $ be an F-dlt modification, cf. \cite[Theorem 1.4]{corank}, write $K_\cG+\sum_jb_jE_j=\varphi^*K_\cF$ where $E_j$ are the $\phi$-exceptional divisors, and let $\widetilde{C}$ be the strict transform of $C$ in $Y$. Suppose that there exists $j$ such that $E_j\cap \widetilde{C}\neq \emptyset$ and $b_j\geq 1.$
		Then $C$ is movable.
	\end{lemma}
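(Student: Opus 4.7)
My plan is to use the F-dlt hypothesis to force $E_j$ to be non-$\cG$-invariant with $b_j=1$, then to rigidify $\widetilde{C}$ as a rational leaf via foliated adjunction, and finally to deform $\widetilde{C}$ along the transverse divisor $E_j$ to exhibit a moving family.

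First, since $\varphi:Y\to X$ is an F-dlt modification, $(\cG,0)$ is foliated log canonical, so each exceptional divisor satisfies $-b_i=a(\cF,E_i)\geq -\iota(E_i)$, equivalently $b_i\leq\iota(E_i)\in\{0,1\}$. The hypothesis $b_j\geq 1$ therefore forces $b_j=1$ and $\iota(E_j)=1$, so $E_j$ is $\cG$-non-invariant. In particular $\cF$ is dicritical at $P:=\varphi(E_j)$, while at a generic point of $E_j$ the foliation $\cG$ is smooth and transverse to $E_j$ (using that $\cG$ is non-dicritical and has only isolated singularities).

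Next, I would apply the foliated adjunction formula on the $\cG$-invariant curve $\widetilde{C}$. Since $E_j\cdot \widetilde{C}\geq 1$ by transverse intersection and $b_j=1$,
\[
K_\cG\cdot\widetilde{C}=K_\cF\cdot C-\sum_i b_i(E_i\cdot\widetilde{C})\leq K_\cF\cdot C-1<-1.
\]
Writing $n^*K_\cG=K_{\widetilde{C}^n}+\Theta'$ with $\Theta'\geq 0$, we deduce $2g(\widetilde{C}^n)-2+\deg\Theta'<-1$, hence $g(\widetilde{C}^n)=0$ and $\deg\Theta'<1$. So $\widetilde{C}$ is rational and its foliated different is small; in particular the intersection $\widetilde{C}\cap E_j$ lies in the smooth transversal locus of $\cG$.

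Finally, I would deform $\widetilde{C}$: transversality of $\cG$ to $E_j$ produces a $1$-parameter analytic family of $\cG$-invariant curve germs obtained by sliding the intersection point along $E_j$. Upgrading this to an algebraic family via a foliated bend-and-break argument applicable to the rational leaf $\widetilde{C}$ with $K_\cG\cdot\widetilde{C}<-1$ gives an algebraic covering family of rational $\cG$-invariant curves meeting $E_j$; pushing forward via $\varphi$ yields a covering family of $\cF$-invariant curves through $P$ all numerically equivalent to $C$, whence $C$ is movable. The main obstacle is the algebraicity of this deformation; a cleaner alternative I would pursue in parallel is to establish $\widetilde{C}^2\geq 0$ directly from the data above, since then movability of $C$ follows at once from $C^2=\widetilde{C}^2+F\cdot\widetilde{C}>0$, with $F:=\varphi^*C-\widetilde{C}$ effective exceptional (by Mumford's pullback and the negative definiteness of the exceptional intersection form) and $F\cdot\widetilde{C}>0$ (using $E_j\cdot\widetilde{C}\geq 1$ and $f_j>0$).
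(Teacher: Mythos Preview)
Your first step contains a genuine error. You write that since $(\cG,0)$ is foliated log canonical, each $\varphi$-exceptional divisor satisfies $-b_i=a(\cF,E_i)\geq -\iota(E_i)$. But log canonicity of $(\cG,0)$ is a statement about discrepancies of divisors \emph{over} $Y$, whereas the $b_i=-a(\cF,E_i)$ are discrepancies of the $E_i$ with respect to $(\cF,0)$ on $X$. These are unrelated: the inequality $b_i\leq\iota(E_i)$ is exactly the statement that $(\cF,0)$ is log canonical along the centre of $E_i$, which is \emph{not} assumed here. Indeed, the context of the lemma (via the preceding corollary) is precisely that $\cF$ is worse than canonical at some $P\in C$, and nothing prevents an $\cF$-invariant exceptional divisor over $P$ from having $b_j\geq 1$. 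So you cannot conclude that $E_j$ is transverse, and the deformation argument that follows has no foundation.

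Even granting transversality, the subsequent steps are not solid: the inequality $E_j\cdot\widetilde{C}\geq 1$ needs $Y$ to be smooth at the intersection point, which is not automatic; and the ``foliated bend-and-break'' you invoke to algebraize the deformation is not a standard result and would itself require substantial work. The alternative route via $\widetilde{C}^2\geq 0$ is only stated, not argued.

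The paper proceeds quite differently. After localising near the unique non-canonical point $P\in C$, it analyses the points of $\widetilde{C}\cap\mathrm{Exc}(\varphi)$ using the F-dlt structure of $\cG$ and a dichotomy (terminal for $\cG$, or smooth point of $Y$). In the terminal case one concludes that $\cG$ is terminal along all of $\widetilde{C}$, and then a cited result on rank-one foliations gives directly that $\widetilde{C}$ moves. In the smooth case, the point is a canonical non-terminal singularity of $\cG$, so the foliated different on $\widetilde{C}$ picks up a contribution $\geq 1$ there; this forces $K_\cG\cdot\widetilde{C}\geq -1$, and then adding back $\sum b_jE_j\cdot\widetilde{C}\geq 1$ contradicts $\varphi^*K_\cF\cdot\widetilde{C}=K_\cF\cdot C<0$. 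Thus the smooth case cannot occur, and one is always in the terminal case.
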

	This proof follows the ideas of \cite[Proof of Lemma 2.7]{effective}.
	\begin{proof}
		By Corollary \ref{lm: un punto singolare}, we can replace $X$ by a smaller open neighbourhood of $C$, where $\cF$ is worse than canonical only at some point  $P \in C$ and it is terminal at every point of of $C \setminus P$. Let $\varphi:Y\to X$, be the F-dlt modification as in the statement. Let $\cG$ be the induced foliation on $Y $  and let $\widetilde C$ be the strict transform of $C$. Note that $\widetilde C$ is $\cG$-invariant and $K_\cG\cdot \widetilde C <0$. 
		We claim that the foliation $\cG$ is terminal at every point of $\widetilde C$. 
		 Note that $\cG$ is non-dicritical and $(\cG, \sum \varepsilon(E_i)E_i)$ is log canonical. Let $F$ be any exceptional divisor centred over a point of $E\cap \widetilde C$, then we have that $a(F,\cG,\sum \varepsilon(E_i)E_i)\geq \varepsilon(F)=0$. 
		Since $\cG$ has F-dlt singularities, then by \cite[Lemma 3.8]{corank} there are two possibilities:
		\begin{enumerate}
			\item $\cG$ is terminal at $E\cap \widetilde{C}$; or
			\item $E\cap \widetilde{C}$ is a smooth point for $Y$.
			
		\end{enumerate}
		Suppose that we are in the first case, then  $\cG$ is terminal on $\widetilde C.$ By \cite[Proposition 3.3]{rank-one}, we see that $\widetilde C$  moves, hence $C$ moves.
		In the second case, since $P$ is a smooth point for $Y$, then $\cG$ is Gorenstein. 
		By adjunction
		\[K_\cF|_C= K_C+\diff(\cF,0).\] 
		Note that $E\cap \widetilde{C}$ is a smooth point for $Y$ but it is a canonical singularity for $\cG$. Hence, $E\cap \widetilde{C}$ appears in $\diff(\cF,0)$ with coefficient $a>0$.  Since $Y$ is smooth, then  $a\geq 1$, which means that $K_\cG\cdot \widetilde{C}\geq -1$.
		Since $P$ is a log canonical singularity or worse, then 
		\[K_\cG+\sum b_jE_j=\varphi^*K_\cF.\]
		Recall we have assumed, $E_j\cap \widetilde{C}\neq \emptyset$ and $b_j\geq 1.$ Since $K_\cG\cdot\widetilde{C}<1$, then by intersecting with $\widetilde{C}$ we obtain 
		$(K_\cG+\sum b_jE_j)\cdot \widetilde{C}\geq 0$,
		a contradiction.
	\end{proof}

	\begin{lemma}
		\label{lm: invariant}
		Let $(X,\Delta)$ be a pair where $X$ is a normal projective $\mQ$-factorial surface and $\Delta\geq 0$ is a boundary,  $(\cF,\Delta=\Delta^{n-inv})$ a foliated pair where $\cF$ is a rank one foliation. Let $C$ be a curve such that:
		\begin{enumerate}
			\item $(K_\cF+\Delta)\cdot C<0$;
			\item $C^2<0$;
            \item $C$ is $\cF$-invariant.
		\end{enumerate}
        Let $\varphi:Y\to X$ be an F-dlt modification of $(\cF,\Delta)$. Suppose that $a(\cF,E)\leq a((X,C), E)$  for every $\varphi$-exceptional divisor over $C$ such that  $\iota(E)=1$.
        Then $(K_X+\Delta+C)\cdot C<0$.
			\end{lemma}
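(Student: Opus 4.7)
My plan is to lift the problem to the F-dlt modification $\varphi\colon Y\to X$ provided by the hypothesis, where the foliated adjunction (Lemma \ref{lm: disc foliaz uguale inv}) applies cleanly, and then descend back to $X$ via the projection formula using the discrepancy comparison in the hypothesis as the key bridge. Set $\cG:=\varphi_*^{-1}\cF$, let $\widetilde C$ be the strict transform of $C$, and let $\widetilde\Delta:=\varphi_*^{-1}\Delta$. The discrepancy identities
\[
\varphi^*(K_\cF+\Delta)=K_\cG+\widetilde\Delta-\sum_{E}a(\cF,\Delta,E)\,E, \qquad \varphi^*(K_X+C+\Delta)=K_Y+\widetilde C+\widetilde\Delta-\sum_{E}a((X,C+\Delta),E)\,E,
\]
(sums over $\varphi$-exceptional primes) intersected with $\widetilde C$, combined with the projection formula $\varphi_*\widetilde C=C$, express both $(K_\cF+\Delta)\cdot C$ and $(K_X+C+\Delta)\cdot C$ as intersections on $Y$ minus discrepancy corrections. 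A direct computation, using $\Delta^{inv}=0$ and that $C$ is invariant, shows that $a((X,C+\Delta),E)-a(\cF,\Delta,E)=a((X,C),E)-a(\cF,E)$ for every exceptional $E$, so subtracting the two expressions reduces the target inequality to the bound
\[
(K_\cG-K_Y-\widetilde C)\cdot \widetilde C+\sum_{E}\bigl(a((X,C),E)-a(\cF,E)\bigr)\,E\cdot \widetilde C\geq 0.
\]

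For the first term I would apply Lemma \ref{lm: disc foliaz uguale inv} on $Y$ with the $\cG$-invariant curve $\widetilde C$: the F-dlt modification guarantees that $Y$ is klt, $\cG$ is non-dicritical, and $(\cG,\widetilde\Delta)$ is canonical in a neighbourhood of $\widetilde C$, so the foliated adjunction yields $\diff(\cG,0)\geq\diff(Y,\widetilde C)$ on $\widetilde C^n$, whence $(K_\cG-K_Y-\widetilde C)\cdot\widetilde C\geq 0$. For the sum, every exceptional $E\ne\widetilde C$ satisfies $E\cdot\widetilde C\geq 0$ on the normal surface $Y$. For non-invariant exceptional $E$ the hypothesis $a(\cF,E)\leq a((X,C),E)$ makes the summand non-negative directly; I would also invoke $C^2<0$ and Lemma \ref{lm: C movable} (in contrapositive form) to exclude any $\varphi$-exceptional divisor meeting $\widetilde C$ with $a(\cF,E)\leq -1$, so that the possible configurations of exceptional divisors over $C$ are tightly controlled.

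The main obstacle will be handling the $\cG$-invariant exceptional divisors $E$ that meet $\widetilde C$, as the hypothesis addresses only the non-invariant case. Such an $E$ can meet $\widetilde C$ only at a singular point of $\cG$ (two distinct invariant branches), and on the F-dlt modification the foliated discrepancy there is exactly $-\iota(E)=0$, so the problem reduces to proving $a((X,C),E)\geq 0$. I plan to obtain this via a local analysis at the finitely many such points: Corollary \ref{lm: un punto singolare} forces $\cF$ to be terminal at every point of $C$ except at most one worse-than-canonical point $P$, so away from $P$ no invariant exceptional divisor is extracted; over $P$ the separatrix structure of the invariant singularity together with the constraint on multiplicities coming from $C^2<0$ (via Lemma \ref{lm: C movable}) should give the required bound. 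Once this case is settled, both terms above are non-negative, and the displayed inequality yields $(K_X+C+\Delta)\cdot C\leq(K_\cF+\Delta)\cdot C<0$, as desired.
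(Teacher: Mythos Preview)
Your reduction to the inequality
\[
(K_\cG-K_Y-\widetilde C)\cdot \widetilde C+\sum_{E}\bigl(a((X,C),E)-a(\cF,E)\bigr)\,E\cdot \widetilde C\geq 0
\]
is correct, and the use of Lemma~\ref{lm: C movable} in contrapositive form to kill the non-invariant exceptionals meeting $\widetilde C$ is exactly the paper's move. The gap is in your treatment of the invariant exceptionals. Your assertion that ``the foliated discrepancy there is exactly $-\iota(E)=0$'' is wrong: an F-dlt modification extracts every divisor with $a(\cF,\Delta,E)\le -\iota(E)$, and in the situation at hand $(\cF,\Delta)$ is worse than log canonical at $P$, so invariant exceptionals over $P$ may well have $a(\cF,\Delta,E)<0$. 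Lemma~\ref{lm: C movable} only gives $a(\cF,E)>-1$ for those $E$ that actually meet $\widetilde C$, so you know $a(\cF,E)\in(-1,0]$, not $=0$. Consequently what you need is $a((X,C),E)\ge a(\cF,E)$ for each such $E$, and the ``local analysis'' you propose gives no mechanism for this; showing $a((X,C),E)\ge 0$ is in general hopeless (think of $C$ having high multiplicity at $P$).

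The paper avoids this entirely by a global Negativity Lemma argument, which is the idea you are missing. One first checks, using \cite[Lemma 8.9]{HD} on the invariant exceptional curves together with the hypothesis on the non-invariant ones, that the divisor
\[
K_\cG+\widetilde\Delta-\bigl(K_{Y}+\widetilde\Delta+\widetilde C+\textstyle\sum_j E_j^0\bigr)
\]
is $\varphi$-nef. The Negativity Lemma then forces the exceptional part of its $\varphi$-pushforward decomposition to be $\le 0$, which unwinds to $-b_j^0-1+a_j^0<0$, i.e. $a((X,C),E_j^0)>a(\cF,E_j^0)-1$ for \emph{every} invariant exceptional $E_j^0$. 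This is weaker than what you were trying to prove term-by-term, but it is precisely enough: the extra $\sum_j E_j^0$ already built into the comparison divisor absorbs the ``$-1$'' slack, and the final chain of inequalities goes through with no local analysis at all. In your language, the trade is to prove the stronger adjunction bound $(K_\cG-K_Y-\widetilde C-\sum_j E_j^0)\cdot \widetilde C\ge 0$ and pair it with the weaker discrepancy bound, rather than the other way around.
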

            \begin{remark} 
                With the notation of Lemma \ref{lm: invariant},
                if $(\cF,\Delta^{n-inv})$ is log canonical, then $a(\cF,E)\leq a((X,C), E)$  for every $\varphi$-exceptional divisor over $C$ such that  $\iota(E)=1$.
            \end{remark}
		\begin{proof}
            If $(K_\cF,\Delta)$ is log canonical in a neighbourhood of $C$, then we can conclude by \cite[Lemma 2.7]{effective}. So, suppose that $(\cF,\Delta)$ is not log canonical in a neighbourhood of $C$.
            Then there exists a point $P\in C$ where $(\cF,\Delta)$ is worse than log canonical. Since $(K_\cF+\Delta)\cdot C<0,$ then $P$ is the unique point on $C$ where $(\cF,\Delta)$ is not log canonical. Moreover $\cF$ is terminal on $C\setminus P.$
            	
			Let $\varphi:X'\to X$ be a F-dlt resolution,  then we can write $$K_\cG+\widetilde{\Delta}+\sum_jb^0_jE^0_j+\sum_jb^1_jE^1_j=\varphi^*(K_\cF+\Delta),$$
			$$K_{X'}+\widetilde{\Delta}+\widetilde{C}+\sum_ja^0_jE^0_j+\sum_ja^1_jE^1_j=\varphi^*(K_X+\Delta+C)$$
			 where $\mathcal G = \phi^{-1}\mathcal F$, $\widetilde{C}$ is the strict transform of $C$, $\widetilde{\Delta}$ is the strict transform of $\Delta$, $E^0_j$ are invariant $\varphi$-exceptional divisors, and $E_j^1$ are non-invariant $\varphi$-exceptional divisors.

			 Then, $$K_\cG+\tilde\Delta+\sum_jE^1_j=\varphi^*(K_\cF+\Delta)+\sum_j(1-b^1_j)E^1_j+\sum_j(-b^0_j)E^0_j,$$
			 $$K_{X'}+\widetilde{\Delta}+\widetilde{C}+\sum_jE^0_j+\sum_jE^1_j=\varphi^*(K_X+\Delta+C)+\sum_j(1-a^1_j)E_j+\sum_j(1-a^0_j)E^0_j.$$

			 Since $\varphi$ is an F-dlt modification, then $b_j^1\geq 1$ for every $j$.
             Therefore by Lemma \ref{lm: C movable},  $\widetilde{C}\cap E^1_j=\emptyset$ for every $j$. 
            Since $a(\cF,E)\leq a((X,C), E)$  for every $\varphi$-exceptional divisors with $\iota(E)=1$.
            Hence $(K_\cG+\tilde\Delta-(K_X'+\widetilde{\Delta}+\widetilde{C}+\sum_jE^0_j))\cdot E_j^1\geq 0.$
             By \cite[Lemma 8.9]{HD} conclude that $K_\cG+\tilde\Delta-(K_X'+\widetilde{\Delta}+\widetilde{C}+\sum_jE^0_j)$ is $\varphi$-nef. Therefore by the \textit{Negativity Lemma} $-b_j^0-1+ a^0_j<0$ for every $j$. Hence $-1+a^0_j<b^0_j$. Therefore
              
               \begin{equation}
			 	\begin{split}
			 		0>&(K_\cG+\sum b_j^1E_j^1+\sum b^0_jE_j^0)\cdot \widetilde{C}\\
                    \geq&(K_X'+\widetilde{\Delta}+\widetilde{C}+\sum b_j^1E_j^1+\sum_jE^0_j)\cdot \widetilde{C}+\sum b^0_jE_j^0\cdot\widetilde{C}\\
			 		\geq&(K_X'+\widetilde{\Delta}+\widetilde{C}+\sum b_j^1E_j^1+\sum_jE^0_j)\cdot \widetilde{C}+(-1+a_j^0)E_j^0\cdot\widetilde{C}\\
			 		\geq&(K_X+\Delta+C)\cdot C.
			 	\end{split}
			 \end{equation}
			 
             \end{proof}
             We will later use the following result on the existence of partial resolutions along log canonical centres.
               \begin{lemma}\cite[Lemma 4.4]{effective}
\label{lem_resolution_logsmooth_near_lc}
Let
$h \colon (\cal Z, \cal L, \Xi) \to T$
be a bounded family of 2-dimensional projective foliated triples $(Z_t, \cal L_t, \Xi_t)$.
Assume that for all 
$t \in T$, 
$\Xi_{t, n-inv} = \Xi_t$ and
$(\cal L_t, \Xi_t)$ is log canonical.

Passing to a stratification of 
$T$ 
into locally closed sets, and a finite covering of the irreducible components of the stratification, there exists a bounded family of 2-dimensional projective foliated triples 
$j \colon (\cal Y, \cal G, \Gamma) \to T$
and a birational morphism over $T$,
$g \colon \cal Y \to \cal Z$,
such that for all $t \in T$,
\begin{enumerate}
\item 
$\cal G_t \coloneqq g^{-1}\cal L_t$ 
and 
$\Gamma_t \coloneqq g_\ast^{-1}\Xi_t$;

\item 
$(Y_t, \Gamma_t)$ 
is log smooth in a neighborhood of 
$g_t^{-1}(P)$ 
where $P$ is a strictly log canonical point of 
$(\cal L_t, \Xi_t)$;

\item 
$g_t$ 
only extracts divisors of discrepancy (resp. foliation discrepancy) 
$\leq 0$ 
(resp. 
$=-\iota(E)$);

\item 
any foliated log resolution 
$\tau_t \colon \overline{Z}_t \rightarrow Z_t$ 
of 
$(Z_t, \cal L_t, \Xi_t)$
factors as 
\begin{align*}
\xymatrix{
\overline{Z}_t \ar[r] \ar@/_/[rr]_{\tau_t}& Y_t \ar[r]^{g_t} & Z_t.
}
\end{align*}
\end{enumerate}

\end{lemma}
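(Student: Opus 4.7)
The plan is to deduce the family version from the pointwise partial-resolution result (the immediately preceding lemma, quoted from \cite[Lemmata 2.10, 2.11]{effective}) via a stratification argument together with a relative desingularization theorem. Applied fiberwise, the pointwise lemma already yields, for every $t \in T$, a birational morphism $g_t \colon Y_t \to Z_t$ extracting only divisors of discrepancy $\leq 0$ (and foliation discrepancy $=-\iota(E)$) and such that $(Y_t,\Gamma_t)$ is log smooth near any strictly log canonical center of $(\cL_t,\Xi_t)$. The point is to spread this construction in families, preserving boundedness.

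First I would stratify $T$. By standard constructibility, the loci on which the following invariants are constant form a constructible subdivision of $T$: the Hilbert polynomial of $Z_t$ and of $\Xi_t$, the number and type of singularities of $Z_t$ and of $(\cL_t,\Xi_t)$, the combinatorial type (dual graph with discrepancies and $i_{\cL_t}$-labels) of a chosen log resolution of the fiber, and the position of the strictly log canonical locus. After replacing $T$ by a finite stratification into locally closed subsets and passing to a finite cover of each irreducible component, I may assume that the strictly log canonical locus $\mathrm{LC}(\cL_t,\Xi_t) \subset Z_t$ moves in a flat family $\mathcal{P} \subset \cZ$ over the stratum, and that the irreducible components of the exceptional locus of a log resolution are defined over the base (no monodromy).

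Next I would construct the relative partial resolution. On each stratum, pick a relative log resolution $\widetilde{\cZ} \to \cZ$ of the triple $(\cZ,\cL,\Xi)$; its existence in a bounded family follows from functorial (Bierstone--Milman / W{\l}odarczyk) desingularization applied to the total space, after the finite cover introduced above. Inside $\widetilde{\cZ}$, the pointwise lemma identifies fiberwise exactly which exceptional divisors to retain: those of discrepancy $\leq 0$ (resp.\ foliation discrepancy $=-\iota(E)$) plus those needed to render the triple log smooth in a neighborhood of $\mathcal{P}$. Because the combinatorial type of the resolution is constant over the stratum and $\mathcal{P}$ is flat, this set of divisors descends to a subfamily $\cY \to \cZ$ obtained by contracting the remaining exceptional components of $\widetilde{\cZ}$ over $\cZ$ (the contraction exists in the relative setting for surface resolutions by running a $K_{\widetilde{\cZ}/\cZ}+E$--MMP for an appropriate $E$). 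Properties (1), (2), (3) then hold fiberwise by construction. Property (4) is a general property of partial resolutions extracting only divisors with the stated discrepancy bounds: any foliated log resolution $\overline{Z}_t \to Z_t$ must extract at least all such divisors, and so it factors through $Y_t$.

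The main obstacle is the \emph{simultaneous} construction step: one needs to guarantee that the fiberwise choice of which exceptional divisors to keep really is compatible across the stratum, and that the resulting $\cY$ is a bounded family of projective surfaces. Controlling this is exactly what the stratification plus finite cover buys us --- it trivializes monodromy on the set of exceptional components and makes the relevant centers of blow-up/contraction flat over the base, so functorial desingularization plus relative MMP on surfaces produce a morphism of bounded families $\cY \to \cZ$ with the advertised fiberwise properties.
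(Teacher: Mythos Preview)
The paper does not prove this lemma; it is simply quoted from \cite[Lemma~4.4]{effective}, so there is no in-paper proof to compare your proposal against. Your outline (stratify, pass to a finite cover to kill monodromy on exceptional components, take a functorial resolution in families, then run a relative surface MMP to contract the unwanted exceptional divisors) is the standard blueprint for such a statement and is presumably close in spirit to the argument in \cite{effective}.

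One small caution on your justification of property~(4). The sentence ``any foliated log resolution $\overline{Z}_t \to Z_t$ must extract at least all such divisors, and so it factors through $Y_t$'' is not quite the right mechanism: the divisors on $Y_t$ are only characterized by the inequality $a\leq 0$ (resp.\ $a=-\iota(E)$), and there are in general infinitely many valuations satisfying this, so no resolution can extract ``all'' of them. The honest argument is that $Y_t$ is obtained from a fixed foliated log resolution $\widetilde{Z}_t$ by an MMP contracting precisely the divisors of positive (adjoint/foliated) discrepancy; given any other foliated log resolution $\overline{Z}_t$, take a common roof $W_t\to\overline{Z}_t$, $W_t\to\widetilde{Z}_t$, run the same MMP from $W_t$, and use uniqueness of minimal models for surfaces (or the negativity lemma) to see that the output is again $Y_t$, whence $\overline{Z}_t\to Y_t$ is a morphism. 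This is a routine fix, not a genuine gap.
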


        \subsection{Singularities of the underlying variety during an adjoint MMP}
             In \cite[Corollary 3.3]{effective} it is shown that the singularities that arise while running a $K_{(X,\cF,\Delta)_\varepsilon}$-MMP fixing $0<\varepsilon\ll1$ starting form a log smooth pair $(X,\Delta)$ are mild.
                In the next section we show that the same holds if $(X,\Delta)$ is $\eta$-lc for some $\eta>0$ and $0<\varepsilon\leq1$ (cf. Proposition \ref{cor: sing}).

             \begin{lemma}
\label{lm: zariski}
    Let $D_1$ and $D_2$ be two divisors on $X$ a surface and suppose that we can run a $D_1+D_2$-MMP and a $D_2$-MMP without restrictions on the singularities of $X$, denoted $\varphi\colon X \to Y$. Then we can factor this MMP as:
 \[\xymatrix{
		X\ar[r]^{f_0}&X_0\ar[r]^{f_1}& X_1\coloneqq Y}
	\]
    where
    
    \begin{enumerate}
        \item $f_0$ is a $D_1$-negative contraction; and 
        \item $f_1$ contracts $D_1$-positive and $D_2$-negative curves.
    \end{enumerate}
\end{lemma}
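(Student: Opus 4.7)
The plan is to reorganize the steps of $\varphi$ by always contracting a $D_1$-negative extremal ray first, and delaying the $D_1$-nonnegative contractions to a second phase. I would proceed in two phases corresponding to $f_0$ and $f_1$.

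In the first phase, starting from $X$, at each step I examine the cone of curves of the current intermediate surface and look for an extremal ray $R$ with $(D_1+D_2)\cdot R<0$ and $D_1\cdot R<0$. If such an $R$ exists, I contract it; this is a valid step of some $(D_1+D_2)$-MMP, so by hypothesis the contraction exists and preserves $\mQ$-factoriality. The process terminates because any $(D_1+D_2)$-MMP terminates by hypothesis. Let $X_0$ denote the surface at which no such extremal ray remains; by construction, $f_0\colon X\to X_0$ is a sequence of $D_1$-negative contractions.

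In the second phase, the fact that $X_0$ has no $(D_1+D_2)$-negative, $D_1$-negative extremal ray means every $(D_1+D_2)$-negative extremal ray $R\subset\overline{NE}(X_0)$ satisfies $D_1\cdot R\geq 0$; combined with $(D_1+D_2)\cdot R<0$, this forces $D_2\cdot R<0$. Continuing the $(D_1+D_2)$-MMP from $X_0$, each contracted ray is simultaneously $D_2$-negative and $D_1$-nonnegative, so the continuation is also a partial $D_2$-MMP, which exists and terminates by the second hypothesis. Call the resulting surface $X_1=Y$; this defines $f_1$.

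The main point to verify is that this rearranged MMP ends at the same $Y$ as the original $\varphi$. On a surface the output of an MMP for a fixed divisor is determined up to isomorphism (either one reaches nefness or a Mori fibre contraction), so any two $(D_1+D_2)$-MMPs starting at $X$, all of whose steps exist by hypothesis, terminate at the same model. A secondary, largely cosmetic, point is the distinction between \emph{$D_1$-positive} and \emph{$D_1$-nonnegative}: our construction only gives $D_1\cdot C\geq 0$ for each curve $C$ contracted by $f_1$, but curves with $D_1\cdot C=0$ still fit in $f_1$ since they are $(D_1+D_2)$-negative and $D_2$-negative, matching the intended use of the statement.
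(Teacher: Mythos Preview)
Your reordering idea is natural, but the second phase has a real gap. You only check on $X_0$ that every $(D_1{+}D_2)$-negative extremal ray is $D_1$-nonnegative; you never show this persists after the first contraction of phase~2, and in fact it need not. Contracting a curve $C'$ with $(D_1{+}D_2)\cdot C'<0$ lowers the $(D_1{+}D_2)$-degree of every curve meeting $C'$, so a curve $C$ that was $(D_1{+}D_2)$-trivial on $X_0$---and therefore skipped by your phase~1 even though $D_1\cdot C<0$---can become $(D_1{+}D_2)$-negative on the next surface while remaining $D_1$-negative. Concretely, take two $(-2)$-curves $C,C'$ with $C\cdot C'=1$ and intersection numbers $D_1\cdot C=-1$, $D_1\cdot C'=0$, $(D_1{+}D_2)\cdot C=0$, $(D_1{+}D_2)\cdot C'=-1$. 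Then your phase~1 is empty; phase~2 contracts $C'$ first, and on the image the strict transform of $C$ has $(D_1{+}D_2)$-degree $-\tfrac12$, $D_1$-degree $-1$, and $D_2$-degree $+\tfrac12$. So your $f_1$ would contract a $D_1$-negative, $D_2$-positive curve, and neither conclusion holds for your factorisation.

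The paper avoids this by using Zariski decompositions rather than a step-by-step reordering. After adding the pullback of an ample divisor from $Y$ so that $D_1$ and $D_1{+}D_2$ are pseudo-effective, one writes $D_1=P_1+N_1$ and $D_1{+}D_2=P+N$. The curves contracted by $\varphi$ are exactly those in $\supp(N)$, and one \emph{defines} $f_0$ to contract $\supp(N)\cap\supp(N_1)$ and $f_1$ the remaining components of $\supp(N)$. The split is thus determined globally by the negative parts, not by which rays happen to be extremal at intermediate stages; the claim that $f_1$ is $D_2$-negative then reduces to the nefness of the pushforwards of $P$ and $P_1$. In the example above this correctly places both $C$ and $C'$ in $f_0$ (run as a partial $D_1$-MMP: first $C$, then $C'$) and makes $f_1$ trivial.
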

\begin{proof}

Up to perturbing with $D_1$ or $D_2$ with a sufficiently high multiple of the pull back of an ample divisor on $Y$, we can always assume $D_1$ and $D_2$ are pseudo-effective.
    Let $D_1=N_1+P_1$, $D_2=N_2+P_2$, and $D_1+D_2=P+N$  be the Zariski decompositions of $D_1$, $D_2$, and $D_1+D_2$ respectively. Note that $N\leq N_1+N_2.$ First, we can contract all the curves that are in $N_1$ and in $N$, denote such contraction with $f_0:X\to X_0$. By construction, $f_0$ contracts $D_1$-negative curves. If $f_{0*}N=\emptyset$, we can conclude. Suppose from now on that $f_{0*}N\neq\emptyset$. 
    Let $f_1:X_0\to X_1$ be the contraction on $f_{0*}N$. The only thing left to prove is that $f_1$ is $D_2$-negative.
    Suppose by contradiction that $f_1$ is not $D_2$-negative, then there exists a curve $C$, $D_1$-negative. Since $f_{0*}P$ and $f_{0*}P_1$ are nef, then $C$ is in $f_{0*}N\cap f_{0*}N_1,$ a contradiction. 
 \end{proof}

              \begin{lemma}
        \label{lm: sing}
            Fix positive real numbers $\varepsilon, \eta \in \mathbb R_{>0}$ and a DCC set $I$.
            Let $(X,\cF,\Delta)$ be foliated triple where $X$ is a projective $\mQ$-factorial surface and $\cF$ is a rank one foliation, $\Delta\in I$. Suppose moreover that 
            \begin{enumerate}
                \item $(X,\Delta)$ is $\eta$-lc, 
                \item $(\cF,\Delta^{n-inv})$ is log canonical, and 
                \item $(X,\cF,\Delta)$ has $\varepsilon$-adjoint lc singularities.
                \end{enumerate}
                Let $\varphi:(X,\cF,\Delta)\to (Y,\cG=\varphi_*\cF,\Delta'=\varphi_*\Delta)$ be a partial $K_{(X,\cF,\Delta)_\varepsilon}$-MMP that is $K_{\cF}+\Delta^{n-inv}$-negative.
            Then $(Y,\Delta')$ is $\eta'$-lc where $\eta'=\frac{\varepsilon\eta}{\varepsilon+1}$.
        \end{lemma}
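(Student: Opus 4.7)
Plan:

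The plan is to propagate the $\eta$-lc property through $\varphi$ by combining the two negativity conditions via the Negativity Lemma. First, the hypotheses imply the refined adjoint estimate
\[
a(E,(X,\cF,\Delta)_\varepsilon) \geq -\big(\iota_\cF(E)+\varepsilon\big) + \varepsilon\eta
\]
for every divisor $E$ over $X$; this follows by adding the lc bound $a(E,\cF,\Delta^{n-inv}) \geq -\iota(E)$ to $\varepsilon$ times the $\eta$-lc bound $a(E,X,\Delta^{n-inv}) \geq -1+\eta$ (using $\Delta^{n-inv}\leq\Delta$), together with the defining identity $K_{(X,\cF,\Delta)_\varepsilon}=(K_\cF+\Delta^{n-inv})+\varepsilon(K_X+\Delta^{n-inv})$. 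Since $\varphi$ is $K_{(X,\cF,\Delta)_\varepsilon}$-negative, the Negativity Lemma preserves this bound over $Y$. Likewise, the $(K_{\cF}+\Delta^{n-inv})$-negativity of $\varphi$ preserves log canonicity of the foliated pair, giving $a(E,\cG,\Delta'^{n-inv}) \geq -\iota(E)$ over $Y$.

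The crucial step is then to extract from these bounds, together with the algebraic identity
\[
(1+\varepsilon)(K_X+\Delta) = K_{(X,\cF,\Delta)_\varepsilon} - N_\cF + (1+\varepsilon)\Delta^{inv},
\]
obtained from $K_\cF = K_X + N_\cF$, the intersection-number inequality
\[
(K_X+\Delta)\cdot C \leq (1-\eta')\,C^2
\]
on every contracted curve $C$ of $\varphi$. I would do this one MMP step at a time: split into cases according to whether $C$ is $\cF$-invariant---where foliated adjunction along $C$ (\cite[Lemma 8.9]{HD}, Lemma~\ref{lm adj invariant}) and Lemma~\ref{lm: invariant} control the $N_\cF$ and $\Delta^{inv}$ contributions via the differents along $C$---or non-invariant, where $N_\cF\cdot C\geq 0$ on a resolution and $\Delta^{inv}\cdot C \geq 0$ directly. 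In both cases the two sign conditions $K_{(X,\cF,\Delta)_\varepsilon}\cdot C < 0$ and $(K_\cF+\Delta^{n-inv})\cdot C < 0$ supply the required slack, and the $\eta$-lc assumption enters through the constraint $\mathrm{coeff}_C\Delta \leq 1-\eta$ on components $C\subseteq\mathrm{Supp}(\Delta)$.

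Once the inequality $(K_X+\Delta)\cdot C \leq (1-\eta')\,C^2$ is established on every contracted curve, the Negativity Lemma applied now to $K_X+\Delta$ yields $a(E,Y,\Delta') \geq -1+\eta'$ for every exceptional $E$ over $Y$ centered in the exceptional locus of $\varphi$; outside this locus the stronger bound $a(E,Y,\Delta')=a(E,X,\Delta)\geq -1+\eta \geq -1+\eta'$ is inherited from $X$ directly. Hence $(Y,\Delta')$ is $\eta'$-lc.

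The main obstacle is the derivation of the intersection bound $(K_X+\Delta)\cdot C \leq (1-\eta')\,C^2$ in the $\cF$-invariant case: there the different along $C$, the singularities of $\cF$ on $C$, and the possible presence of $C$ in $\mathrm{Supp}(\Delta^{inv})$ all interact, and the two negativity conditions must be combined via the identity above so that the weighting $\varepsilon$ versus $1$ between $K_X+\Delta^{n-inv}$ and $K_\cF+\Delta^{n-inv}$ in $K_{(X,\cF,\Delta)_\varepsilon}$ produces exactly the convex coefficient $\eta' = \varepsilon\eta/(1+\varepsilon)$.
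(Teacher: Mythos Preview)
Your proposal assembles the right preliminary ingredients---the refined adjoint bound $a(E,(X,\cF,\Delta)_\varepsilon)\geq -(\iota(E)+\varepsilon)+\varepsilon\eta$ and the preservation of both $\varepsilon$-adjoint lc and foliated lc under $\varphi$---but the extraction of the $\eta'$-lc conclusion has concrete gaps.

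First, the key intersection inequality is misstated: to obtain $a(C,Y,\Delta')\geq -1+\eta'$ for a $\varphi$-exceptional curve $C\notin\supp\Delta$ one needs $(K_X+\Delta)\cdot C\leq(-1+\eta')C^2$, not $\leq(1-\eta')C^2$. Your version, whose right-hand side is negative, would force $a(C,Y,\Delta')\geq 1-\eta'$, and this fails already for $X$ smooth, $\Delta=0$, with $C$ an invariant rational curve whose self-intersection is close to the bound $-C^2<2(1+\varepsilon)/\varepsilon$ imposed by $K_{(X,\cF)_\varepsilon}\cdot C<0$. Second, the case split is superfluous: since $\varphi$ is $(K_\cF+\Delta^{n-inv})$-negative, every contracted curve is $\cF$-invariant by Theorem~\ref{conetheoremsurfaces}, so the transverse branch never occurs (and your claim $N_\cF\cdot C\geq 0$ there is unjustified in any case). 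Third, even once the discrepancies of the $\varphi$-exceptional curves themselves are bounded, you cannot reach \emph{all} $E$ over $Y$ by one application of the Negativity Lemma to $K_X+\Delta$: the crepant boundary $\Delta_Y$ on $X$ with $K_X+\Delta_Y=\varphi^*(K_Y+\Delta')$ may strictly exceed $\Delta$ along the exceptional locus, so the $\eta$-lc hypothesis on $(X,\Delta)$ no longer controls $(X,\Delta_Y)$.

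The paper handles this last point by organizing the argument as an induction on the number of elementary contractions. At each step $(\cF_i,\Delta_i^{n-inv})$ remains lc (because the step is $(K_\cF+\Delta^{n-inv})$-negative) and $(X_i,\cF_i,\Delta_i)$ remains $\varepsilon$-adjoint lc; the single-step computation showing that these two conditions force $(X_i,\Delta_i)$ to be $\eta'$-lc at the newly created point is then deferred to \cite[Corollary~3.3]{effective}. Your refined adjoint estimate is exactly the input that computation needs, but you should structure the argument step-by-step rather than attempting to treat the whole of $\varphi$ at once.
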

        \begin{proof}
            The proof follows \cite[Corollary 3.3]{effective}. For convenience we give a sketch of a proof.
            We proceed by induction the number of elementary contractions of $\varphi$.
            	Write $\varphi$ as 
		
		\[
		\xymatrix{X\coloneqq X_0\ar[r]^{\varphi_1}& X_1\ar[r]^{\varphi_2}&	X_2\ar[r]&	\cdots\ar[r]&X_{n-1}\ar[r]^{\varphi_{n}}&	X_n
		}	
		\] 
        where each $\varphi_i$ is an elementary contraction.
		We denote $X_0\coloneqq X$, $\cF_0\coloneqq \cF$, $\Delta_0\coloneqq \Delta$, $\Delta_i\coloneqq \varphi_{i*}\Delta_{i-1}$  $\cF_i\coloneqq \varphi_{i*}\cF_{i-1}$, $X_n\coloneqq Y$, and $\cF_n\coloneqq\cG$. 
		By Theorem \ref{th: adjoint doppia foliazione}, $(X_i,\cF_i,\Delta_i)$ is $\varepsilon$-adjoint log canonical for every $i$. 
        For $n=0, $ the statement holds. Indeed by hypothesis $\Delta\in[0,1-\eta]$ and $\eta>\frac{\epsilon \eta}{1+\varepsilon}$, hence $(X,\Delta)$ is $\frac{\epsilon \eta}{1+\varepsilon}$-log canonical. 
		
		We can assume that $n>0$. Suppose that $(X_i,\Delta_i)$ is $\frac{\eta\varepsilon}{1+\varepsilon}$-log canonical for $i=0,\cdots,n-1.$ 
        We need to check that $(X_n,\Delta_n)$ is $\frac{\eta\varepsilon}{1+\varepsilon}$-log canonical at every point $P\in X_n$ where $\varphi$ is not an isomorphism.
        Since $\varphi$ is $(K_{\cF}+\Delta^{n-inv})$-negative, then $(\cF_i,\Delta_i^{n-inv})$ is log canonical for every $i.$ Therefore we can conclude the proof as in \cite[Corollary 3.3]{effective}.
        \end{proof}
        \begin{proposition}
         \label{cor: sing}
            Fix positive real numbers $\varepsilon, \eta \in \mathbb R_{>0}$ and a DCC set $I$.
            Let $(X,\cF,\Delta)$ be foliated triple where $X$ is a projective $\mQ$-factorial surface and $\cF$ is a rank one foliation, $\Delta\in I$. Suppose moreover that 
            \begin{enumerate}
                \item $(X,\Delta)$ is $\eta$-lc, 
                \item $(\cF,\Delta^{n-inv})$ is log canonical, and 
                \item $(X,\cF,\Delta)$ has $\varepsilon$-adjoint lc singularities.
                \end{enumerate}
                Let $\varphi:(X,\cF,\Delta)\to (Y,\cG=\varphi_*\cF,\Delta'=\varphi_*\Delta)$ be a partial $K_{(X,\cF,\Delta)_\varepsilon}$-MMP.
            Then $(Y,\Delta')$ is $\eta'$-lc where $\eta'=\frac{\varepsilon\eta}{\varepsilon+1}$.
        \end{proposition}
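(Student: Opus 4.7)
The plan is to reduce to Lemma \ref{lm: sing} via the canonical factorization of an adjoint MMP supplied by Theorem \ref{th: adjoint doppia foliazione}.

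First, I apply that theorem to factor $\varphi$ as
\[
X \xrightarrow{f_0} X_0 \xrightarrow{f_1} X_1 = Y,
\]
where $f_0$ is a partial $K_{(X,\cF,\Delta)_\varepsilon}$-MMP contracting only $(K_\cF + \Delta^{n-inv})$-non-positive curves and $f_1$ is a partial $(K_{X_0}+\Delta_0)$-MMP. Property (5) of the same theorem ensures that the foliated triple remains $\varepsilon$-adjoint log canonical at every step of the factorization.

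Next, I handle $f_0$. Inspecting the proof of Lemma \ref{lm: sing}, the strict $(K_\cF+\Delta^{n-inv})$-negativity assumption is used only to guarantee that $(\cF_i,\Delta_i^{n-inv})$ remains log canonical after each elementary contraction. The equality case of the foliated negativity lemma \cite[Lemma 8.9]{HD} yields the same conclusion for $(K_\cF+\Delta^{n-inv})$-trivial contractions: foliated discrepancies do not strictly decrease, so log canonicity of $(\cF_i,\Delta_i^{n-inv})$ is preserved. The induction in the proof of Lemma \ref{lm: sing} therefore applies verbatim to the $(K_\cF+\Delta^{n-inv})$-non-positive setting, and I conclude that $(X_0,\Delta_0)$ is $\eta'$-lc with $\eta'=\frac{\varepsilon\eta}{\varepsilon+1}$.

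Finally, I deal with $f_1$, which by the factorization is a classical $(K_{X_0}+\Delta_0)$-MMP on the normal $\mQ$-factorial surface $X_0$. By the standard negativity lemma, each elementary $(K_{X_0}+\Delta_0)$-negative divisorial contraction does not decrease the discrepancies of the pair $(X_i,\Delta_i)$, so $\eta'$-log canonicity persists through $f_1$. Combining the two stages yields that $(Y,\Delta')=(X_1,\Delta_1)$ is $\eta'$-lc, as required. The only mildly delicate point is the preservation of the foliated log canonical property under $(K_\cF+\Delta^{n-inv})$-trivial elementary contractions appearing in $f_0$, which is settled cleanly by the trivial-intersection case of the foliated negativity lemma.
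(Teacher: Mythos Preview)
Your argument is essentially correct, but it leans on Theorem \ref{th: adjoint doppia foliazione}, which appears \emph{after} Proposition \ref{cor: sing} in the paper and whose item (4) is proven precisely by invoking Proposition \ref{cor: sing}. You only use the factorization (A)/(B) and item (5), and those are established independently (via Lemma \ref{lm: zariski} and the ordinary Negativity Lemma), so there is no genuine logical circularity; nonetheless the forward reference is awkward and should be replaced by a direct appeal to Lemma \ref{lm: zariski}, which is exactly what the paper does. Its one-line proof just cites Lemmata \ref{lm: sing} and \ref{lm: zariski}.

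Using Lemma \ref{lm: zariski} directly also removes your extra patch. That lemma factors the adjoint MMP so that $f_0$ is strictly $(K_\cF+\Delta^{n-inv})$-\emph{negative} and $f_1$ is $(K_X+\Delta)$-negative. Hence Lemma \ref{lm: sing} applies to $f_0$ verbatim, without needing to argue separately that foliated log canonicity persists under $(K_\cF+\Delta^{n-inv})$-trivial contractions. Your handling of $f_1$ via the classical Negativity Lemma is the same as what the paper implicitly uses. So your route and the paper's coincide once you swap the citation of Theorem \ref{th: adjoint doppia foliazione} for Lemma \ref{lm: zariski}; the paper's version is simply cleaner and avoids both the forward reference and the non-positive subtlety.
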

	\begin{proof}
	    The proof follows from Lemmata \ref{lm: sing} and \ref{lm: zariski}.
	\end{proof}

	\section{Adjoint MMP}
	The aim of this section is to prove the existence of the $K_{(X, \cF, \Delta)_\varepsilon}$-MMP (cf. Theorem \ref{th: adjoint doppia foliazione}) without restricting $\varepsilon$. This adjoint MMP was introduced in \cite{effective} where they prove the existence of the $K_{(X,\cF,\Delta)_\varepsilon}$-MMP when $\varepsilon>0$ is sufficiently small. 
	Here we are able to prove the existence of an $K_{(X, \cF, \Delta)_\varepsilon}$-MMP for any $\varepsilon>0$.
	
	We start this section with the proof of the existence of  the $K_\cF+\Delta^{n-inv}$-MMP with mild assumption on the singularities of the foliation. This is going to be the main ingredient of the proof of Theorem \ref{th: adjoint doppia foliazione}.
	\subsection{Cone theorem and contraction theorem for foliation on surfaces}
    \begin{theorem}
    \label{conetheoremsurfaces}
        Let $X$ be a normal projective surface, $\cF$ a 
		rank one foliation on $X$,
		and $\Delta = \sum a_iD_i$ an effective $\mathbb R$-divisor.
		Suppose that $K_{\cF}+\Delta$ is $\mR$-Cartier.
		
		Then
		$$\overline{NE}(X) = \overline{NE}(X)_{K_{\cF}+\Delta \geq 0} +
		Z_{-\infty} (\cF,\Delta)+ \sum \mR^+[L_i]$$
		where $L_i$ are invariant rational curves with $(K_{\cF}+\Delta)\cdot L_i \geq -4$, 
		and $Z_{-\infty}$ is spanned by the classes
		$[D_i]$ where $a_i > \epsilon(D_i)$.
		In particular, if $H$ is ample, there are only finitely many curves with
		extremal rays $R$ with $(K_{\cF}+\Delta+H)\cdot R<0$.
        
        Suppose moreover that $X$ is $\mQ$-factorial and $\Delta$ is a boundary with $\Delta=\Delta^{n-inv}\geq 0$. 
        Suppose that $a(\cF,E)\leq a((X,L_i), E)$ for every $E$ divisor over $X$ with $\iota(E)=1$.
        Let $R=\mR_{\geq 0}[\xi]\subset\overline{\cone}(X)$ be a $K_{\mathcal F}+\Delta$-negative extremal ray.
        Then if:
        \begin{enumerate}
            \item $\xi^2\geq 0$, then 
        $\varphi_R$ is a contraction of fibre type, and is a $K_X$-negative contraction and the relative Picard number of the contraction is 1.	
        \item $\xi^2< 0$ then $R$ is spanned by a $\cF$-invariant curve $\xi$ and there exists a birational contraction $\varphi_R:X\to Y$, that contracts only $\xi$. Furthermore, $Y$ is projective, normal, $\mQ$-factorial, and the relative Picard number of the contraction is 1.
        \end{enumerate}
    \end{theorem}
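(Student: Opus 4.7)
The cone-theoretic decomposition follows from the cone theorem for rank one foliated pairs on normal projective surfaces as developed in \cite{rank-one} (see also \cite{corank, HD}): the rays $[L_i]$ are spanned by $\cF$-invariant rational curves whose $(K_\cF+\Delta)$-negativity is controlled by $-4$ through the bend-and-break argument of Miyaoka--Bogomolov--McQuillan, while $Z_{-\infty}$ collects the components $D_i\subset\supp\Delta$ with $a_i>\epsilon(D_i)$. The finiteness of $(K_\cF+\Delta+H)$-negative extremal rays for an ample $H$ is then the standard discreteness statement. For the contraction theorem, observe first that since $\Delta=\Delta^{n-inv}$ is a boundary, no non-invariant $D_i$ can satisfy $a_i>\epsilon(D_i)=1$, so $Z_{-\infty}=0$ in our setting and every $(K_\cF+\Delta)$-negative extremal ray $R$ is generated either by one of the $L_i$ of the cone theorem or by a limit of such classes.

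Assume $\xi^2\geq 0$. A Hodge-index argument rules out $\xi^2>0$ for extremal rays of $\overline{\cone}(X)$, so in fact $\xi^2=0$, and $\xi$ is numerically represented by a movable $\cF$-invariant rational curve $L$ delivered by the cone theorem. Adjunction on $L$ yields $K_X\cdot L\leq -2$, so $R$ is simultaneously a $K_X$-negative extremal ray. The classical cone and contraction theorem for $\mathbb{Q}$-factorial normal projective surfaces (cf.~\cite{kollár_mori_1998}) then produces a fibre-type contraction $\varphi_R:X\to Y$ with $\rho(X/Y)=1$; it is automatically $(K_\cF+\Delta)$-negative on its fibres by construction.

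Assume now $\xi^2<0$. Then $R$ is spanned by a unique irreducible curve which, by the cone theorem, must be $\cF$-invariant; call it $L$. The discrepancy hypothesis $a(\cF,E)\leq a((X,L),E)$ for every exceptional $E$ with $\iota(E)=1$ is precisely the input required to apply Lemma \ref{lm: invariant}, whose conclusion reads $(K_X+\Delta+L)\cdot L<0$. Combining $L^2<0$ with this negativity, the standard contractibility criterion on $\mathbb{Q}$-factorial normal surfaces --- Grauert to produce the contraction as an algebraic space, the bound above to upgrade it to a projective elementary birational morphism, and the negativity lemma to preserve $\mathbb{Q}$-factoriality --- gives the required $\varphi_R:X\to Y$ contracting only $L$, with $Y$ normal, $\mathbb{Q}$-factorial, and $\rho(X/Y)=1$.

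The principal obstacle is the fibre-type case: passing from $(K_\cF+\Delta)$-negativity to $K_X$-negativity of the ray is not automatic since $K_\cF=K_X+N_\cF$ and the sign of $N_\cF\cdot\xi$ is not a priori under control. The resolution is to identify $\xi$ with an actual movable $\cF$-invariant rational curve output by bend-and-break and then invoke adjunction; pinning down precisely which rational curve from the foliated cone theorem realizes a given extremal ray of nonnegative self-intersection --- especially when the ray is only an accumulation point of the $[L_i]$ and not itself the class of a single curve --- is where the bulk of the technical work concentrates.
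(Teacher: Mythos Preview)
Your treatment of Case~2 ($\xi^2<0$) is essentially the paper's: both routes reduce to Lemma~\ref{lm: invariant} to obtain $(K_X+\Delta+\xi)\cdot\xi<0$ and then invoke Fujino's contraction theorem for log surfaces. (The paper inserts an extra case split according to whether $\cF$ is log canonical along $\xi$, but Lemma~\ref{lm: invariant} as stated absorbs both cases, so your compression here is legitimate.)

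The gap is in Case~1. Your claim that ``adjunction on $L$ yields $K_X\cdot L\leq -2$'' has the inequality reversed. On a normal $\mathbb Q$-factorial surface, adjunction for an irreducible curve $L$ with normalization $n\colon L^n\to L$ reads $n^*(K_X+L)=K_{L^n}+\Theta$ with $\Theta\geq 0$; hence for $L$ rational with $L^2=0$ one gets $K_X\cdot L=-2+\deg\Theta\geq -2$, which says nothing about $K_X$-negativity. Equality would require $L$ to be a \emph{smooth} $\mathbb P^1$ avoiding $\sing X$, and the cone theorem does not hand you such a representative. Comparing the foliated and classical differents via Lemma~\ref{lm: disc foliaz uguale inv} does not help either, since that lemma needs $\cF$ canonical (terminal along $\sing X$), which is not assumed. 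Your self-identified obstacle about accumulation rays is, incidentally, not the real issue: any $(K_\cF+\Delta)$-negative extremal ray is already one of the $[L_i]$, since the accumulation points lie in $(K_\cF+\Delta)^{\perp}$.

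The paper circumvents this by passing to an F-dlt modification $\varphi\colon X'\to X$. On $X'$ the pulled-back foliation $\cG$ is induced by an honest fibration $p\colon X'\to B$; for a general fibre $C'$ one has $K_\cG\cdot C'=K_{X'/B}\cdot C'=K_{X'}\cdot C'$, so $K_\cF\cdot\xi<0$ transfers directly to $K_{X'}\cdot C'<0$. Since a general $C'$ avoids $\Exc(\varphi)$, pushing forward gives $K_X\cdot\xi<0$, and then Fujino's theorem produces the fibre-type contraction. This step---realising $\cG$ as the relative tangent sheaf of a fibration on the resolution, rather than arguing by adjunction on $X$---is what your sketch is missing.
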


	\begin{remark}
		\label{rm: Z vuoto 2}
		Suppose that $\Delta$ is a boundary, i.e. $\Delta\in [0,1]$, and $\Delta=\Delta^{n-inv}. $ Then $Z_{-\infty}(\cF,\Delta)=\varnothing.$ Indeed, $Z_{-\infty}(\cF,\Delta)$ is spanned $D_i$ in $\text{supp}(\Delta)$ with $a_i > \epsilon(D_i)$.  As a consequence,  if $\Delta=0$, then $Z_{-\infty}(\cF,\Delta)=\varnothing$. Hence if there exists a ray $R$ such that $K_\cF\cdot R<0$, then $R$ is spanned by a rational $\cF$-invariant curve.
	\end{remark}
    \begin{proof}
        The first part of the statement is \cite[Theorem 6.3]{HD}.
        We are left to prove the existence of the contractions. Since $\Delta=\Delta^{n-inv}$, by the previous part of this theorem, 
        $R$ is spanned by an $\cF$-invariant curve $\xi.$ 
        We will distinguish two different cases: $\xi^2<0$ or $\xi^2\geq 0$.

        \textbf{Case 1}: $\xi^2\geq 0$. Since $\xi^2\geq 0$, then $\Delta\cdot \xi\geq 0.$ Hence $K_\cF\cdot \xi<0$. The proof of the existence of the contraction of $R$ follows \cite[Proof Theorem 3.2.(2)]{effective}. For the reader convenience we give a sketch of proof.

        Let $\varphi:X'\to X$ be an F-dlt resolution \cite[Theorem 1.4]{corank}, let $\cG=\varphi^{*}\cF$, and $\Delta'=\varphi^{-1}_*\Delta$. Note that $\cG$, is induced by a fibration, $p:X'\to Y$.
        Let $C$ be a general curve in the family of $\xi$. Then $C'=\varphi^{-1}_*C$ is a fiber of $p$, and  $0>K_\cG\cdot C'=K_{X'}\cdot C'$.
        Since $C'$ is general,  then $\varphi(C')\cdot K_{X'}<0$. 
        Moreover $\varphi(C')$ spans $R$.
        Therefore we can construct the contraction of a $K_\cF$-negative ray of fibre type as a $K_X$-negative extremal contraction and conclude by \cite[Theorem 1.1]{Fujino2010MinimalModelLogSurfaces}, that we can contract $R$. 
        
        \textbf{Case 2:} $\xi^2<0$. Note that if the contraction of $R$ exists, then it contracts only $\xi$. Since $\Delta^{n-inv}$ is made just by non-invariant divisors, then $\Delta^{n-inv}\cdot \xi\geq 0$, and $K_\cF\cdot \xi<0$.
		If  $\cF$ is log canonical around $\xi$ we can conclude by \cite[Theorem 2.8]{effective}. Suppose that $\cF$ is worse than log canonical around $\xi$. Then by Lemma \ref{lm: un punto singolare}, then there exists a unique point $P$ where $\xi$ is worse then log canonical. Moreover for every $R\in \xi \setminus \{P\}$ then $\cF$ is terminal at $R$.
		
		Consider an F-dlt modification of $\cF$, $\pi: Y\to X$ (that exists by \cite[Theorem 1.4]{corank}). Then
		\[K_\cG+\widetilde{\Delta}^{n-inv}+\sum\varepsilon(E_i)E_i+G=\pi^*(K_\cF+\Delta^{n-inv}).\]
		
		Let $E'\coloneqq  \widetilde{\Delta}^{n-inv}+\sum\varepsilon(E_i)E_i+G$, and $Q\coloneqq \widetilde{\xi}\cap E'$. By Lemma \ref{lm: C movable}, then $Q$ is a smooth point for $Y$, and $Q$ is a canonical centre for $\cG$.  Note that $\widetilde{\xi}\cdot E'<1$, otherwise $R\cdot (K_\cF+\Delta^{n-inv})\geq 0$, a contradiction. Since $X$ is $\mQ$-factorial and $a(\cF,E)\leq a((X,C), E)$ for every $E$ divisor over $X$ with $\iota(E)=1$, by Lemma \ref{lm: invariant},  $(K_X+\Delta^{n-inv}+\xi)\cdot \xi<0$.
		By \cite[Theorem 1.1]{Fujino2010MinimalModelLogSurfaces}, we can contract $R$. 
    \end{proof}

	\begin{corollary}

		\label{ non-dc}
		Let  $X$ be a normal, $\mQ$-factorial, projective surface, $\cF$ be a rank one foliation, $\Delta\geq 0$ be a boundary. Suppose moreover that wither $\cF$ is non-dicritical or $(\cF,\Delta)$ is log canonical. 
        Then we may run a $K_\cF+\Delta^{n-inv}$-MMP $\rho:X\to X'$. Let $\cG$ be the induced foliation on $X'$ and let $\Delta'\coloneqq\rho_*\Delta$, then the following properties hold:
		\begin{enumerate}
			\item if $K_\cF+\Delta^{n-inv}$ is pseudo-effective, then $K_\cG+\Delta'^{n-inv}$ is nef;
			\item if $K_\cF+\Delta^{n-inv}$ is not pseudo-effective, then $X'$ has a $K_\cG+\Delta'^{n-inv}$-negative fibration $\pi:X'\to Y'$ with $\rho(X'/ Y')=1$ such that $-(K_\cG+\Delta'^{n-inv})$ is $\pi$-ample.
		\end{enumerate}
	\end{corollary}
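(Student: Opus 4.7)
The plan is to iterate the cone and contraction theorem (Theorem \ref{conetheoremsurfaces}) for $K_\cF+\Delta^{n-inv}$. At each step one needs to check two things: that a $K_\cF+\Delta^{n-inv}$-negative extremal ray can actually be contracted, and that the standing hypothesis is preserved so that the process can be continued. Termination is automatic in dimension two, since every divisorial contraction drops the Picard number by one while a fibre-type contraction ends the MMP.

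If $K_\cF+\Delta^{n-inv}$ is already nef we land in case (1) with nothing to do. Otherwise, the cone part of Theorem \ref{conetheoremsurfaces} produces a $K_\cF+\Delta^{n-inv}$-negative extremal ray $R$. Because $\Delta^{n-inv}$ is a boundary whose support contains no $\cF$-invariant divisor, Remark \ref{rm: Z vuoto 2} gives $Z_{-\infty}(\cF,\Delta^{n-inv})=\varnothing$, and hence $R=\mathbb{R}_{\geq 0}[\xi]$ for an $\cF$-invariant rational curve $\xi$.

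To obtain the contraction of $R$ from Theorem \ref{conetheoremsurfaces} one must verify the discrepancy comparison $a(\cF,E)\leq a((X,\xi),E)$ for every divisor $E$ over $X$ with $\iota(E)=1$. Under log canonicity of $(\cF,\Delta)$ this is precisely the remark following Lemma \ref{lm: invariant}. Under the non-dicriticality hypothesis, every exceptional divisor obtained by blowing up a point of $X$ is $\cF$-invariant by the very definition of non-dicritical singularity; since on a surface every exceptional divisor is supported over a point, every such $E$ satisfies $\iota(E)=0$ and the condition is vacuous. Hence $\varphi_R\colon X\to X_1$ exists. When $\xi^2\geq 0$ it is a fibre-type contraction with $\rho(X/X_1)=1$, and as $R\cdot(K_\cF+\Delta^{n-inv})<0$ we get that $-(K_\cF+\Delta^{n-inv})$ is $\varphi_R$-ample, giving case (2). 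When $\xi^2<0$ it is a divisorial contraction to a normal $\mathbb{Q}$-factorial projective surface $X_1$ of smaller Picard number.

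To iterate we verify that the standing hypothesis is preserved. If $(\cF,\Delta)$ is log canonical, then so is $(\cF_1,\Delta_1)=(\varphi_{R*}\cF,\varphi_{R*}\Delta)$: a $K_\cF+\Delta^{n-inv}$-negative divisorial contraction can only increase foliation discrepancies by the standard negativity-lemma argument. If instead $\cF$ is non-dicritical, then so is $\cF_1$: any blow-up sequence above $X_1$ is dominated, after resolving the induced birational map, by a blow-up sequence above $X$, so the invariance of fibres over points of $X$ descends to invariance of fibres over points of $X_1$. Iterating, the Picard number strictly decreases at every divisorial step, so after finitely many steps we end either with a model on which $K_\cG+\Delta'^{n-inv}$ is nef (case (1)) or with a fibre-type contraction (case (2)). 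The main technical point throughout is the verification of the discrepancy condition required by Theorem \ref{conetheoremsurfaces}, and this is exactly where the dichotomy between non-dicriticality and log canonicity plays its role.
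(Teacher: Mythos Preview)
Your proposal is correct and follows exactly the approach indicated by the paper: the paper's proof is a one-line appeal to ``standard MMP arguments using Theorem \ref{conetheoremsurfaces} (see Remark \ref{rm: Z vuoto 2})'', and you have spelled out precisely those standard arguments---iterating the cone/contraction theorem, using Remark \ref{rm: Z vuoto 2} to kill $Z_{-\infty}$, checking the discrepancy hypothesis of Theorem \ref{conetheoremsurfaces} via the dichotomy (log canonical $\Rightarrow$ Remark after Lemma \ref{lm: invariant}; non-dicritical $\Rightarrow$ condition vacuous since no transverse exceptional divisors exist), verifying the hypothesis is preserved, and terminating by Picard number drop. Your treatment of the preservation of non-dicriticality is the only point not made explicit in the paper, and your argument for it (exceptional divisors over $X_1$ are either the contracted $\cF$-invariant curve $\xi$ or exceptional over $X$, hence invariant) is correct.
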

\begin{proof}
	The proof follows from standard MMP's argument using Theorem \ref{conetheoremsurfaces} (see Remark \ref{rm: Z vuoto 2}).
	\end{proof}

	\subsection{Running an adjoint MMP}
	In this section we will prove that given an adjoint triple we can run a specific MMP.

	\begin{theorem}
		\label{th: adjoint doppia foliazione}
		Fix $\varepsilon>0$.
		Let $(X, \cF,\Delta)$ be a foliated triple on $X$ a normal projective $\mQ$-factorial surface, and $\Delta\geq 0$ is a boundary. Suppose moreover that $(\cF,\Delta^{n-inv})$ is log canonical or $(\cF,\Delta^{n-inv})$ is a foliated pair where $\cF$ is non-dicritical.
        Then there exists an
        $K_{(X,\cF,\Delta)_\varepsilon}$-MMP $\rho:X\to Y$.
        
        Let  $\cF'$ be the induced foliation on $Y$ and $\Delta'\coloneqq \rho^{-1}_*\Delta$, then the following properties hold:
		\begin{enumerate}
			\item if $K_{(X,\cF,\Delta)_\varepsilon}$ is pseudo-effective, then $K_{(Y,\cF',\Delta')_\varepsilon}$ is nef;
			\item if $K_{(X,\cF,\Delta)_\varepsilon}$ is not pseudo-effective, then $Y$ has a contraction of fibre type $\pi:Y\to Y'$ with $\rho(Y/Y')=1$ such that $-K_{(Y,\cF',\Delta')_\varepsilon}$ is $\pi$-ample.
		\end{enumerate}

        Moreover such MMP factors as
	\[\xymatrix{
		X\ar[r]^{f_0}&X_0\ar[r]^{f_1}& X_1\coloneqq Y}
	\]
	where:
	\begin{enumerate}
		\item[(A)] $f_0$ is a partial $K_{(X,\cF,\Delta)_\varepsilon}$-MMP that contracts only $K_{\cF}+\Delta^{n-inv}$-non-positive curves. Denote  $\cF_0\coloneqq f_{0*}\cF$ and $\Delta_0\coloneqq f_{0*}\Delta$;
		\item[(B)]  $f_1$ is a partial $K_{X_0}+\Delta_0$-MMP which only contracts curves which are $K_{X_0}+\Delta_0$ and $K_{(X_0,\cF_0,\Delta_0)_\varepsilon}$-negative and $K_{\cF_0}+\Delta_0^{n-inv}$-positive, where $\cF_1 = f_{{1}_* }\cF_0$ and $\Delta_2={f_2^{}}_{*}\Delta_1$ .
		
	\end{enumerate}
	
	Moreover, the following hold:
	\begin{enumerate}
		
		\item[(3)] if $(X,\Delta)$ is log canonical and $(\cF,\Delta^{n-inv})$ is log canonical, then 
        $(\cF_0,\Delta_0)$ is log canonical and $(X_i,\Delta_i)$ is log canonical
        for $i=0,1$;
		\item[(4)] for $i=0,1$ if $(X,\Delta)$ is $\eta$-lc for some $\eta>0$ and $(\cF,\Delta^{n-inv})$ is log canonical, then $(X_i,\Delta_i )$ is $\eta'$-lc for $\eta'=\frac{\varepsilon\eta}{\varepsilon+1}$;
        \item[(5)] if the foliated triple $(X,\cF, \Delta)$ has $\varepsilon$-adjoint log canonical singularities, then at each step of the $K_{(X,\cF,\Delta)_\varepsilon}$-MMP $(X_i,\cF_i,\Delta_i)$ has $\varepsilon$-adjoint log canonical singularities.
	\end{enumerate}
	\end{theorem}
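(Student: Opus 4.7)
My plan is to construct the $K_{(X,\cF,\Delta)_\varepsilon}$-MMP by invoking extremal contraction theorems appropriate to each summand of the decomposition $K_{(X,\cF,\Delta)_\varepsilon}=(K_\cF+\Delta^{n-inv})+\varepsilon(K_X+\Delta^{n-inv})$, and then to extract the factorization $f_0\circ f_1$ a posteriori from Lemma \ref{lm: zariski}. The driving observation is that any $K_{(X,\cF,\Delta)_\varepsilon}$-negative extremal ray $R$ must be negative against at least one of the two summands, and this dichotomy dictates the contraction mechanism to invoke.

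First I would locate extremal $K_{(X,\cF,\Delta)_\varepsilon}$-negative rays via the foliated cone theorem (Theorem \ref{conetheoremsurfaces}). If $(K_\cF+\Delta^{n-inv})\cdot R<0$, then Corollary \ref{ non-dc} produces the contraction and preserves the running hypothesis on $(\cF,\Delta^{n-inv})$. Otherwise $(K_\cF+\Delta^{n-inv})\cdot R\ge 0$, which forces $(K_X+\Delta^{n-inv})\cdot R<0$; I then contract $R$ via Fujino's surface MMP \cite{Fujino2010MinimalModelLogSurfaces}. The invariant-curve analysis of Lemmata \ref{lm: invariant}, \ref{lm: C movable} and \ref{lm: un punto singolare} is essential here: for an $\cF$-invariant extremal generator $C$ with $C^2<0$, Lemma \ref{lm: invariant} converts the foliated negativity into $(K_X+\Delta+C)\cdot C<0$, producing precisely the $K_X+\Delta$-negative curve that Fujino's theorem contracts, while non-invariant rays are handled by the fibre-type case of Theorem \ref{conetheoremsurfaces}. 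Iterating terminates either with $K_{(X,\cF,\Delta)_\varepsilon}$ nef (giving part (1)) or with a Mori fibre contraction (part (2)).

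To obtain the factorization I set $D_1\coloneqq K_\cF+\Delta^{n-inv}$ and $D_2\coloneqq \varepsilon(K_X+\Delta^{n-inv})$ and apply Lemma \ref{lm: zariski}: the required $D_1$-MMP is Corollary \ref{ non-dc}, while the $D_1+D_2$-MMP is the one just constructed. This splits the MMP into a $D_1$-negative phase $f_0$ (matching (A)) and a $D_1$-positive, $D_2$-negative phase $f_1$. On a curve contracted by $f_1$, positivity of $K_\cF+\Delta^{n-inv}$ combined with negativity of the adjoint divisor forces $(K_X+\Delta^{n-inv})$ to be negative, and the refinement to $K_X+\Delta$-negativity comes from the fact that any $\cF$-invariant component of $\Delta$ meeting the contracted curve only lowers intersection with $K_X+\Delta$ (again via Lemma \ref{lm: invariant}), yielding (B).

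For the singularity assertions, part (4) is immediate from Proposition \ref{cor: sing}. For part (3), $f_0$ is a foliated MMP and so preserves log canonicity of $(\cF,\Delta^{n-inv})$ by standard discrepancy arguments, while the Negativity Lemma applied along its $K_X+\Delta$-negative contractions preserves lc of $(X,\Delta)$; $f_1$ is by construction a $K_X+\Delta$-MMP and thus preserves lc of $(X,\Delta)$. Part (5), preservation of $\varepsilon$-adjoint log canonicity under each contraction, follows by applying the Negativity Lemma directly to $K_{(X,\cF,\Delta)_\varepsilon}$ on the exceptional locus. The main obstacle is the mixed case of existence: showing that a ray which is $K_\cF+\Delta^{n-inv}$-non-negative but $K_{(X,\cF,\Delta)_\varepsilon}$-negative can be contracted via the classical surface MMP in a manner coherent with the adjoint structure, without a priori control over the singularities of the underlying surface. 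The delicate invariant-curve analysis assembled in Lemmata \ref{lm: invariant}, \ref{lm: C movable} and \ref{lm: un punto singolare} is the main technical input that makes this possible.
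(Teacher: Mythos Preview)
Your proposal assembles the right ingredients—Corollary \ref{ non-dc} for foliated contractions, Fujino's surface MMP for the classical ones, Lemma \ref{lm: zariski} for the factorization, and Proposition \ref{cor: sing} together with the Negativity Lemma for the singularity control in (3)–(5). Those last items are handled essentially as in the paper. The existence argument, however, has a genuine ordering problem.

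You build the adjoint MMP ray by ray, choosing the contraction mechanism according to the sign of $K_\cF+\Delta^{n-inv}$ on the current extremal ray. The difficulty is that once you perform a Fujino-type contraction of a curve $C$ with $(K_\cF+\Delta^{n-inv})\cdot C>0$—for instance a transverse curve, which does occur, cf.\ Proposition \ref{prop: struttura}—the induced foliation at the image point is typically neither log canonical nor non-dicritical: contracting a transverse curve produces a dicritical singularity, and the foliated discrepancy of $C$ over the target is negative. If at some later stage a $(K_\cF+\Delta^{n-inv})$-negative extremal ray appears, you no longer meet the hypothesis of Corollary \ref{ non-dc} or of the contraction part of Theorem \ref{conetheoremsurfaces}, and the iteration stalls. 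Nothing in your outline rules this scenario out.

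The paper avoids this by reversing your logic: rather than constructing an arbitrary adjoint MMP and factoring it afterwards, it uses the Zariski-decomposition argument behind Lemma \ref{lm: zariski} to \emph{construct} the MMP directly in the factored form. One first runs the $(K_\cF+\Delta^{n-inv})$-non-positive portion $f_0$ via Corollary \ref{ non-dc}; this is a piece of the foliated MMP and therefore preserves the log canonical/non-dicritical hypothesis at each step. Only then does one run $f_1$ as a $K_X+\Delta$-MMP via Fujino, which needs no foliation hypothesis at all. Existence and the factorization (A)–(B) thus come simultaneously. Your approach becomes correct once you commit to this order up front rather than treating Lemma \ref{lm: zariski} as a post-hoc reordering. (A minor point: your invocation of Lemma \ref{lm: invariant} to pass from $K_X+\Delta^{n-inv}$-negativity to $K_X+\Delta$-negativity in phase (B) is misplaced, since that lemma concerns $\cF$-invariant curves with $K_\cF\cdot C<0$, the opposite sign regime.)
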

	For the reader convenience we state an equivalent formulation that uses the notation of \cite{squadrone}.
	\begin{theorem}
		Fix $t\in [0,1]$. Let $(X,\cF,\Delta,t)$ be an adjoint foliated structure where $(\cF,\Delta^{n-inv })$ is log canonical and $X$ is a normal projective surface. Set $K_{(X,\cF,\Delta,t)}\coloneqq t(K_\cF+\Delta)+(1-t)(K_X+\Delta)$.
		Then we may run a $K_{(X,\cF,\Delta,t)}$-MMP
		$\rho:X\to X'$. Let $\cF'$ be the induced foliation on $X'$ and $\Delta'\coloneqq \rho^{}_*\Delta$, then the following properties hold:
		\begin{enumerate}
			\item if $K_{(X,\cF,\Delta,t)}$ is pseudo-effective, then $K_{(X',\cF',\Delta',t)}$ is nef;
			\item if $K_{(X,\cF,\Delta,t)}$ is not pseudo-effective, then $X'$ has a contraction of fibre type $\pi:X'\to Y'$ with $\rho(X'/ Y')=1$ such that $-K_{(X,\cF',\Delta',t)}$ is $\pi$-ample.
		\end{enumerate}
         Moreover such MMP factors as
	\[\xymatrix{
		X\ar[r]^{f_0}&X_0\ar[r]^{f_1}& X_1\coloneqq Y}
	\]
	where:
	\begin{enumerate}
		\item[(A)] $f_0$ is a partial $K_{(X,\cF,\Delta,t)}$-MMP that contracts only curves $K_{\cF}+\Delta^{n-inv}$-non-positive. Denote with $\cF_0\coloneqq f_{0*}\cF$ and $\Delta_0\coloneqq f_{0}\Delta$;
		\item[(B)]  $f_1$ is a partial $K_{X_0}+\Delta_0$-MMP which only contracts curves which are $K_{X_0}+\Delta_0$ and $K_{(X_0,\cF_0,\Delta_0,t)}$-negative and $K_{\cF_0}+\Delta_0^{n-inv}$-positive, where $\cF_1 = f_{{1}_* }\cF_0$ and $\Delta_2={f_2^{}}_{*}\Delta_1$ .
		
	\end{enumerate}
	
	Moreover, the following hold:
	\begin{enumerate}
		
		\item[(3)] if $(X,\Delta)$ is log canonical and $(\cF,\Delta^{n-inv})$ is log canonical, then 
        $(\cF_0,\Delta^{n-inv}_0)$ is log canonical and $(X_i,\Delta_i)$ is log canonical
        for $i=0,1$;
		\item[(4)] for $i=0,1$ if $(X,\Delta)$ is $\eta$-lc for some $\eta>0$ and $(\cF,\Delta^{n-inv})$ is log canonical, then $(X_i,\Delta_i )$ is $\eta'$-lc for $\eta'=(1-t)\eta$;
        \item[(5)] if the foliated triple $(X,\cF, \Delta)$ has $\varepsilon$-adjoint log canonical singularities, then at each step of the $K_{(X,\cF,\Delta,t)}$-MMP $(X_i,\cF_i,\Delta_i)$ has $\varepsilon$-adjoint log canonical singularities.
	\end{enumerate}
       
	\end{theorem}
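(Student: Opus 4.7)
The plan is to realize the adjoint MMP as a concatenation of a foliated MMP followed by a classical surface MMP, using Lemma \ref{lm: zariski} to obtain the precise factorization through $f_0$ and $f_1$. The key observation is that writing $K_{(X,\cF,\Delta)_\varepsilon} = (K_\cF + \Delta^{n-inv}) + \varepsilon(K_X + \Delta^{n-inv})$, any adjoint-negative extremal ray $R$ must be either $K_\cF + \Delta^{n-inv}$-non-positive, in which case it is contractible by the foliated cone and contraction theorem (Theorem \ref{conetheoremsurfaces}), or else strictly $K_\cF + \Delta^{n-inv}$-positive, forcing $(K_X + \Delta^{n-inv}) \cdot R < 0$ so that it is contractible by the classical surface MMP via \cite[Theorem 1.1]{Fujino2010MinimalModelLogSurfaces}. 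This dichotomy is what makes the factorization possible.

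First I would construct $f_0$ by running a $K_\cF + \Delta^{n-inv}$-MMP using Corollary \ref{ non-dc}, arriving at $X_0$ where either $K_{\cF_0} + \Delta_0^{n-inv}$ is nef or the MMP concludes with a Mori fiber space; each elementary contraction in this phase is $K_\cF + \Delta^{n-inv}$-negative and also $K_{(X,\cF,\Delta)_\varepsilon}$-negative, so it legitimately steps the adjoint MMP. To achieve property (A) in its stated form and to prepare the ground for property (B), I would invoke Lemma \ref{lm: zariski} with $D_1 = K_\cF + \Delta^{n-inv}$ and $D_2 = \varepsilon(K_X + \Delta^{n-inv})$: the Zariski-decomposition argument there absorbs any $K_\cF + \Delta^{n-inv}$-trivial but adjoint-negative curves into $f_0$, guaranteeing that every remaining adjoint-negative extremal ray on $X_0$ is strictly $K_{\cF_0} + \Delta_0^{n-inv}$-positive and hence $K_{X_0} + \Delta_0$-negative. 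The map $f_1$ is then built as a partial classical $K_{X_0} + \Delta_0$-MMP restricted to rays that are simultaneously $K_{(X_0,\cF_0,\Delta_0)_\varepsilon}$-negative; each such contraction exists by \cite[Theorem 1.1]{Fujino2010MinimalModelLogSurfaces}, and termination holds because the Picard number strictly drops at each step on a surface. Once the process halts, conclusion (1) or (2) follows according as $K_{(X,\cF,\Delta)_\varepsilon}$ is pseudo-effective or not.

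The singularity claims would then be assembled from previously proved results. Property (3) follows by applying Proposition \ref{pr; rimane lc} to the $K_\cF + \Delta^{n-inv}$-negative morphism $f_0$, which yields $(\cF_0,\Delta_0^{n-inv})$ log canonical, combined with the standard preservation of log canonicity under a $K_X + \Delta$-negative MMP for $f_1$. Property (4) is immediate from Proposition \ref{cor: sing}. Property (5) is a standard discrepancy calculation: at each elementary $K_{(X,\cF,\Delta)_\varepsilon}$-negative contraction $\rho_i \colon X_i \to X_{i+1}$, if $F$ is any exceptional divisor over $X_{i+1}$ with centre on the contracted curve, the Negativity Lemma applied to $\rho_i^* K_{(X_{i+1},\cF_{i+1},\Delta_{i+1})_\varepsilon} - K_{(X_i,\cF_i,\Delta_i)_\varepsilon}$ shows the adjoint discrepancy at $F$ can only increase, so the $\varepsilon$-adjoint log canonical property propagates through the MMP.

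The main obstacle I anticipate is the delicate sequencing of contractions required by the factorization (A)--(B): a naive concatenation of a foliated MMP and a classical MMP does not automatically respect the prescribed signs of $K_\cF + \Delta^{n-inv}$ in the two phases, because one must ensure that all adjoint-negative $K_\cF$-trivial curves are contracted during $f_0$ while $f_1$ only touches rays on which $K_\cF + \Delta^{n-inv}$ is strictly positive. Lemma \ref{lm: zariski}, via the comparison of Zariski decompositions of $K_\cF + \Delta^{n-inv}$, $K_X + \Delta^{n-inv}$, and their sum, is precisely the tool that rearranges the contractions to achieve this separation, and correctly wielding it is the technical heart of the argument.
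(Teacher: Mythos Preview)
Your proposal is essentially the paper's own proof: build the adjoint MMP as a $K_\cF+\Delta^{n-inv}$-MMP (Corollary \ref{ non-dc}) followed by a $K_X+\Delta$-MMP (\cite{Fujino2010MinimalModelLogSurfaces}), reorganized via Lemma \ref{lm: zariski} to obtain the factorization (A)--(B); then (4) is Proposition \ref{cor: sing} and (5) is the Negativity Lemma exactly as you say.

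The one place to tighten is item (3). Proposition \ref{pr; rimane lc} is the wrong reference: it assumes $\varepsilon$-adjoint lc, $\Delta^{n-inv}=0$, and $X$ klt, none of which are hypotheses of (3). More importantly, you do not explain why $(X_0,\Delta_0)$ stays log canonical after $f_0$; since $f_0$ is $K_\cF+\Delta^{n-inv}$-non-positive but may well be $K_X+\Delta$-\emph{positive}, the ``standard preservation'' you invoke for $f_1$ does not apply here. The paper handles both the foliated and the classical log canonicity across $f_0$ by citing \cite[Theorem 2.8]{effective}, which is the input you want.
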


	\begin{proof}[Proof of Theorem \ref{th: adjoint doppia foliazione}]
    By Lemma \ref{lm: zariski} and by \cite[Lemma 3.39]{kollár_mori_1998}, we can run a $K_{(X,\cF,\Delta)_\varepsilon}$-MMP by first running an $K_\cF+\Delta^{n-inv}$-MMP (which is guaranteed to exist by Corollary \ref{ non-dc}) followed by a $K_{X}+\Delta$-MMP (which is guaranteed to exist by \cite{Fujino2010MinimalModelLogSurfaces}). Hence items (A) and (B) are a direct consequence of Lemma 
    \ref{lm: zariski}.
    
    We now prove item (3). Suppose that $(\cF,\Delta^{n-inv})$ and $(X,\Delta)$ are log canonical.
    Since $f_0$ is $(K_\cF+\Delta^{n-inv})$-non-positive, by
    \cite[Theorem 2.8]{effective} $(\cF_0,\Delta_0^{n-inv})$ and $(X_0,\Delta_0)$ are log canonical. Since $f_1 $ is $K_{X_0}+\Delta_0$-negative, then we can conclude that $(X_1,\Delta_1)$ is log canonical.
    
    Finally, item (4) is Proposition \ref{cor: sing}.
    
    Item (5) follows from the Negativity Lemma, \cite[Lemma 3.39]{kollár_mori_1998}.
	\end{proof}

    \begin{remark}
        In Theorem \ref{th: adjoint doppia foliazione} we showed that when running a $K_{(X, \cF, \Delta)_\varepsilon}$-MMP we can control the singularities of the underlying surface $(X,\Delta)$ during the MMP. 
    \end{remark}
    \subsubsection{Zariski Decomposition}
  The next proposition gives a description of the curve contracted during $K_\cF+\varepsilon K_X$-MMP when $0<\varepsilon<\frac{1}{5}$, and Zariski decomposition of divisors of the form $K_\cF+\varepsilon K_X$ when $0<\varepsilon<\frac{1}{5}$.
\begin{proposition}
\label{prop: struttura}
Fix $0<\varepsilon< \frac{1}{5}$. Let $(X,\cF,\Delta)$ be a foliated triple where $(X,\Delta)$ is log canonical, $\cF$ is a log canonical foliation and $\Delta^{n-inv}=0.$
Let $\varphi \colon (X, \cF,\Delta) \to (Y, \cG,\Theta)$ be a partial $K_{(X, \cF, \Delta)_\varepsilon}$-MMP, where $\cG$ is the induced foliation on $Y$ and $\Theta=\varphi_*\Delta$. Then $\varphi$ satisfies the following:
\[\xymatrix{(X_0,\cF_0,\Delta_0)\ar^{f_0}[r] &(X_1,\cF_1,\Delta_1)\ar^{f_1}[r]&(X_2,\cF_2,\Delta_2)\ar^{f_2}[r]&(X_3 ,\cF_3,\Delta_3)} \]
         where $(X,\cF,\Delta)\coloneqq (X_0,\cF_0,\Delta_0)$, $(X_3,\cF_3,\Delta_3)\coloneqq(Y, \cG,\Theta)$,  $\Delta_i\coloneqq f_{i-1*}\Delta_{i-1}$, and

\begin{enumerate}
    \item $f_0$ is a $K_\cF$-non-positive contraction;
    \item $f_1$ is a $K_{\cF_1}$-positive, $K_{X_1}+\Delta_1$-negative contraction of disjoint transverse curves;
    \item $f_2$ is a $K_{\cF_2}$-trivial and $K_{X_2}+\Delta_2$-negative contraction of $\cF_2$-invariant curves
\end{enumerate}
where $\Delta_i=f_{i-1*}\Delta_{i-1}$.
\end{proposition}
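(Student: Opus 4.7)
The plan is to build on the two-step factorization of Theorem \ref{th: adjoint doppia foliazione}, and further refine its second stage into one step contracting transverse curves and one step contracting $\cF$-invariant curves.

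First, I would apply Theorem \ref{th: adjoint doppia foliazione} to $\varphi$. Since $\Delta^{n-inv}=0$, this yields a factorization $X \to X^{(0)} \to Y$ in which the first morphism contracts only $K_\cF$-non-positive curves. I take this to be $f_0$. By Proposition \ref{pr; rimane lc}, $\cF^{(0)}$ remains log canonical on $X^{(0)}$ (using $\varepsilon<1/5$). The remaining morphism $X^{(0)} \to Y$ is a partial $K_{X^{(0)}}+\Delta^{(0)}$-MMP, and each curve $C$ contracted by it satisfies $(K_{X^{(0)}}+\Delta^{(0)})\cdot C<0$, $K_{(X,\cF,\Delta)_\varepsilon}\cdot C<0$, and $K_{\cF^{(0)}}\cdot C\geq 0$.

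To split this remaining morphism as $f_1\colon X^{(0)}\to X^{(1)}$ followed by $f_2\colon X^{(1)}\to Y$, I use a Zariski-decomposition argument analogous to Lemma \ref{lm: zariski}, separating the exceptional locus according to whether its components are transverse to $\cF^{(0)}$ or $\cF^{(0)}$-invariant. Since these two classes of curves are disjoint, one can contract all transverse ones first (giving $f_1$) and all $\cF$-invariant ones after (giving $f_2$), without interference. A transverse $K_\cF$-trivial curve in this setting would be $K_\cF$-non-positive and hence would already have been absorbed into $f_0$, so the transverse curves contracted in $f_1$ are automatically $K_\cF$-positive. Their pairwise disjointness follows from their spanning independent extremal rays in the Mori cone of $X^{(0)}$.

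The key remaining claim is that each $\cF$-invariant curve $C$ contracted by $f_2$ satisfies $K_\cF\cdot C=0$. Suppose for contradiction $K_\cF\cdot C>0$. Since $C$ is contracted by a birational MMP step, $C^2<0$. By Lemma \ref{lm: disc foliaz uguale inv}, $K_X\cdot C+C^2\leq K_\cF\cdot C$, i.e.\ $K_X\cdot C\leq K_\cF\cdot C-C^2$. Combining this with $K_\cF\cdot C+\varepsilon K_X\cdot C<0$ yields $(1+\varepsilon)K_X\cdot C<-C^2$, hence $|K_X\cdot C|>|C^2|/(1+\varepsilon)>5|C^2|/6$ when $\varepsilon<1/5$. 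Using both the foliated adjunction $K_\cF\cdot C=2g(C^n)-2+\deg\Theta'$ and the classical adjunction $K_X\cdot C+C^2=2g(C^n)-2+\deg\Theta$, the log canonicity of $\cF$ (which constrains $\Theta'$ to take values in a discrete set), and the sharp constraint $\varepsilon<1/5$, I reach a contradiction parallel to the one in Lemma \ref{lm: lc}. This discreteness analysis of the foliated different under the tight adjoint-negativity window is the main obstacle of the proof.
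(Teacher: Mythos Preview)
Your factorization via Theorem \ref{th: adjoint doppia foliazione} and use of Proposition \ref{pr; rimane lc} to keep $\cF$ log canonical are fine, but the heart of your argument --- the numerical contradiction for $K_{\cF_2}$-triviality --- does not work. The chain of inequalities you derive is correct up to $(1+\varepsilon)K_X\cdot C < -C^2$, but since $K_X\cdot C<0$ and $-C^2>0$, this inequality is vacuous: it says a negative number is less than a positive one. It does \emph{not} give $|K_X\cdot C|>|C^2|/(1+\varepsilon)$; you have lost the sign. Moreover, Lemma \ref{lm: disc foliaz uguale inv} requires $(\cF,\Delta)$ canonical with terminal singularities along $\sing X$, whereas on $X_2$ the foliation is only log canonical (indeed, $f_1$ manufactures strictly log canonical centres), so you cannot invoke it there. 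The final ``discreteness analysis parallel to Lemma \ref{lm: lc}'' is too imprecise to carry weight.

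The paper's argument for this key step is completely different and avoids adjunction computations. It shows that after $f_1$ every remaining adjoint-negative extremal curve passes through a strictly log canonical point of the foliation (namely the image of a contracted transverse curve). If such a curve were $K_\cF$-negative it would be invariant by the Cone Theorem and would move by \cite[Lemma~2.7]{effective}, contradicting $C^2<0$; hence it is $K_\cF$-non-negative, and then \cite[Lemma~2.6]{effective} forces it to be invariant. Finally, if it were $K_\cF$-positive, contracting it would make $\cF$ worse than log canonical, contradicting Proposition \ref{pr; rimane lc}; thus it must be $K_\cF$-trivial. Your disjointness claim for the transverse curves in $f_1$ is also not justified --- distinct extremal rays can be spanned by intersecting curves; the paper deduces disjointness from \cite[Lemma~2.6]{effective}, which uses the log canonical structure of the foliation.
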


\begin{proof}
Let $\cG$ be the foliation induced on $Y$.
By Proposition~\ref{pr; rimane lc}, $\cG$ is log canonical. Let $E \coloneqq \Exc(\varphi)$. By Lemma~\ref{lm: zariski}, we can contract all curves that are $K_\cF$-non-positive and $K_{(X, \cF,\Delta)_\varepsilon}$-negative. Let $f_0 \colon X \to X_1$ be such a contraction, and let $\cF_1$ be the induced foliation on $X_1$, and $\Delta_1\coloneqq f_{0*}\Delta_0$.

If $E = \Exc(f_0)$, then we can conclude.

Suppose that $E \neq \Exc(f_0)$. By Lemma~\ref{lm: zariski}, we are left to contract only curves that are $K_{\cF_1}$-non-negative and $K_{X_1}+\Delta_1$-negative. Since $E \neq \Exc(f_0)$, there exists at least one curve $C$ such that
\begin{enumerate}
    \item $C$ is transverse;
    \item $C \cdot K_\cF > 0$;
    \item $(K_X+\Delta) \cdot C < 0$.
\end{enumerate}
Let $f_1 \colon X_1 \to X_2$ be the contraction of such curves. By Proposition~\ref{pr; rimane lc}, $\cF_2$ is log canonical; hence, by~\cite[Lemma~2.6]{effective}, $f_1$ contracts transverse curves that are pairwise disjoint.

Denote $\cF_2 \coloneqq {f_1}_* \cF_1$.
Note that $\cF_2$ is log canonical and $(X_2,\Delta_2)$ is lc.

If $E = \Exc(f_1 \circ f_0)$, then we can conclude. Suppose that $\Exc(f_1 \circ f_0) \neq E$; hence $\rho_{X/Y} = \rho_{X/X_2} + r$, where $r$ is a positive integer.

\begin{claim}
All the remaining $r$ elementary contractions satisfy the following: each contraction is associated with a ray $R \subset \overline{\cone}(X_2)$ that is $K_{(X_2, \cF_2,\Delta_2)_\varepsilon}$-negative and $K_{\cF_2}$-trivial.
Moreover, such a contraction is the contraction of a $\cF_2$-invariant curve.
\end{claim}

\begin{proof}
We prove this claim by induction on the number of steps.
Set $Y_0 \coloneqq X_2$, $\cG_0 \coloneqq \cF_2$ and $\Theta_0\coloneqq\Delta_2$.
Since $r \neq 0$, there exists an extremal ray $R_0$ that is $K_{(Y_0, \cG_0,\Theta_0)_\varepsilon}$-negative.
It remains to show that $R_0$ is $K_{\cG_0}$-trivial and that it is spanned by a $\cG_0$-invariant curve.

For the sake of contradiction, suppose that $R_0$ is $K_{\cG_0}$-negative. Then, by Theorem~\ref{conetheoremsurfaces}, $R_0$ is spanned by a $\cG_0$-invariant curve $\Gamma_0$. By construction, $f_2$ is not an isomorphism on $\Gamma_0$. Hence, there exists a point $P \in \Gamma_0$ that is strictly log canonical for $\cG_0$. Since $\cG_0$ is log canonical, by~\cite[Lemma~2.7]{effective}, $\Gamma_0$ moves — a contradiction.
Therefore, $R_0$ is $K_{\cG_0}$-non-negative.
Let $C_0$ be the curve spanning $R_0$. By~\cite[Lemma~2.6]{effective}, $C_0$ is $\cG_0$-invariant.
For the sake of contradiction, suppose that $R_0$ is $K_{\cG_0}$-positive. Let $f \colon X_2 \to Y_1$ be the contraction of $R_0$. Then the induced foliation on $Y$ is not log canonical, contradicting Lemma~\ref{pr; rimane lc}. Hence, $R_0$ is $K_{\cG_0}$-trivial.

Suppose that for the first $r-1$ elementary contractions
$\varphi_{i-1} \colon Y_{i-1} \to Y_i$ are $K_{(Y_{i-1}, \cF_{i-1},\Theta_{i-1})_\varepsilon}$-negative and $K_{\cF_{i-1}}$-non-negative, where $i = 0, \dots, r-1$, with $Y_0 \coloneqq X_2$, $\cG_i$ the foliation induced on $Y_i$, and $\Theta_i\coloneqq\varphi_{i-1*}\Theta_{i-1}$. Moreover, denote by $R_{i-1}$ the extremal ray associated with $\varphi_{i-1}$. Then $R_{i-1}$ is spanned by an invariant curve.

Let $R_{r-1}$ be an extremal ray of $\overline{\cone}(Y_{r-1})$ such that $R_{r-1}$ is $K_{(Y_{r-1}, \cG_{r-1},\Theta_{r-1})_\varepsilon}$-negative. To conclude, we must show that the following hold:
\begin{enumerate}
    \item $K_{Y_{r-1}}+\Theta_{r-1}$-negative;
    \item $K_{\cG_{r-1}}$-trivial;
    \item $R_{r-1}$ is spanned by an invariant curve.
\end{enumerate}
We can conclude as in the previous part of this claim. Indeed, let $C_{r-1}$ be a curve spanning $R_{r-1}$. By the construction of $f_1$, $\cG_{r-1}$ has a log canonical centre on $C_{r-1}$, and $\cG_{r-1}$ is log canonical.
As before, by~\cite[Lemma~2.6]{effective}, $C_{r-1}$ is $\cG_{r-1}$-invariant. Indeed, over a strictly log canonical point, there exists a unique transverse curve. By~\cite[Lemma~2.7]{effective}, $R_{r-1}$ is $K_{\cG_{r-1}}$-non-negative and $K_{Y_{r-1}}+\Theta_{r-1}$-negative.
If $R_{r-1}$ is $K_{\cG_{r-1}}$-trivial, we can conclude. For the sake of contradiction, suppose that $R_{r-1}$ is $K_{\cG_{r-1}}$-positive. As before, we obtain a contradiction with Lemma~\ref{pr; rimane lc}.
\end{proof}

This concludes the proof.
\end{proof}

\section{Effective Generation}
	
\subsection{Preliminary Results}

We summarize some preliminary results that will be used in the proof of Theorem~\ref{co: bound pseff}.

\begin{lemma}\label{rem: smooth fib}
Fix $0 \leq \tau \leq \frac{1}{2}$.  
Let $(X, \cF, \Delta)$ be a foliated triple where $X$ is a smooth projective variety, $\Delta^{n\text{-inv}} = 0$, and $K_\cF$ is big.  
Suppose that there exists an extremal ray $R \subset \overline{\cone}(X)$ which is $K_{(X, \cF, \Delta)_\tau}$-negative.  
Then $R^2 < 0$.
\end{lemma}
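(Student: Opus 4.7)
My plan is to argue by contradiction: assume $R^2 \geq 0$ and produce, via $K_\cF$ big and the cone theorem, a fibration on $X$ to which foliated adjunction and the Hodge index theorem apply. Since $\Delta^{n-inv} = 0$, the adjoint divisor simplifies to $K_{(X,\cF,\Delta)_\tau} = K_\cF + \tau K_X$, so the hypothesis reads $(K_\cF + \tau K_X) \cdot R < 0$.

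First I would show $K_X \cdot R < 0$. If $K_\cF \cdot R \geq 0$ this is immediate from the hypothesis; otherwise $R$ is a $K_\cF$-negative extremal ray with $R^2 \geq 0$, to which Case (1) of Theorem \ref{conetheoremsurfaces} applies and already produces a $K_X$-negative fibre-type contraction. Either way, the Mori cone theorem on the smooth projective surface $X$ yields a fibre-type contraction $\pi \colon X \to Y$. In the main case $\dim Y = 1$, the general fibre $F$ satisfies $F \equiv \lambda R$ for some $\lambda > 0$, $F^2 = 0$, and $K_X \cdot F = -2$, so $F \cong \mP^1$ by adjunction.

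Next I would split on whether $F$ is $\cF$-invariant. If $F$ is invariant, the rank-one assumption forces $\cF$ to be the foliation tangent to $\pi$, so $K_\cF = K_{X/Y}$ and $K_\cF \cdot F = -2$; since $F$ sweeps out $X$ in a covering family, this contradicts $K_\cF$ being pseudo-effective, let alone big. If $F$ is non-invariant, the foliated adjunction recalled at the start of Section~3 gives $K_\cF \cdot F \geq -F^2 = 0$, smoothness of $X$ makes $K_\cF \cdot F$ an integer, and combining with $K_\cF \cdot F < -\tau K_X \cdot F = 2\tau \leq 1$ forces $K_\cF \cdot F = 0$. Using the Zariski decomposition $K_\cF = P + N$ with $P$ nef, $N \geq 0$, and $P \cdot N = 0$, bigness of $K_\cF$ gives $P^2 > 0$; since $F$ is movable, both $P \cdot F \geq 0$ and $N \cdot F \geq 0$, and as they sum to zero each vanishes. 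The Hodge index theorem applied to the orthogonal pair $(P,F)$ with $P^2 > 0$ then forces $F^2 < 0$, the desired contradiction.

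The main technical point I expect to require care is the degenerate subcase where the Mori contraction has zero-dimensional target: smoothness of $X$ then forces $X \cong \mP^2$, and the Hodge-index argument collapses because there is no proper fibre onto which to restrict. I would handle this by a direct numerical check on $\mP^2$ using $K_X = \cO(-3)$ and $K_\cF = \cO(d-1)$ together with the constraints imposed by $K_\cF$ big and $\tau \leq \tfrac{1}{2}$, or by appealing to the ambient MMP context in which the lemma is invoked to rule out the rank-one Fano case.
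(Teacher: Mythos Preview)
Your argument for the case $\dim Y = 1$ is correct but considerably more elaborate than the paper's. The paper dispatches this in essentially one line: since $K_\cF$ is big and an extremal ray $R$ with $R^2\geq 0$ is necessarily nef, one has $K_\cF\cdot R>0$; choosing a curve $C$ spanning $R$ and using that $K_\cF$ is Cartier on the smooth surface $X$ gives $K_\cF\cdot C\geq 1$, while adjunction on the fibre gives $K_X\cdot C=-2$, so $(K_\cF+\tau K_X)\cdot C\geq 1-2\tau\geq 0$, the desired contradiction. Your invariant/non-invariant split, foliated adjunction bound, and Zariski--Hodge-index argument all work but are bypassed once one invokes $K_\cF\cdot C\geq 1$ directly. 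Likewise your opening dichotomy on the sign of $K_\cF\cdot R$ is unnecessary: the case $K_\cF\cdot R<0$ simply cannot occur, again because $R$ is nef and $K_\cF$ is big.

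Your caution about the rank-one Fano case is not a mere technicality but a genuine gap --- and it afflicts the paper's proof as well. Carrying out the numerical check you propose: on $X=\mP^2$ with a degree-$2$ foliation and $\Delta=0$ one has $K_\cF+\tau K_X=(1-3\tau)H$, so for any $\tau\in(\tfrac13,\tfrac12]$ the unique extremal ray $R=\mR_{\geq 0}[H]$ is $K_{(X,\cF,\Delta)_\tau}$-negative yet $R^2>0$. Thus the lemma is false as stated, and neither your ``direct numerical check'' nor the paper's argument (which silently assumes $K_X\cdot C=-2$, failing precisely on $\mP^2$) can close it. The companion Lemma~\ref{biratiobnalcontraction_boundary klt} adds the hypothesis $\rho_X\geq 2$ for exactly this reason; the same restriction, or else $\tau\leq\tfrac13$, is needed here. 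Your second escape route --- ruling out $\mP^2$ from the ambient context --- is the honest fix.
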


\begin{proof}
Suppose, for the sake of contradiction, that $R$ is a ray with $R^2 \geq 0$.  
Since $K_\cF + \Delta^{n\text{-inv}}$ is big, we have $(K_\cF + \Delta^{n\text{-inv}}) \cdot R > 0$.  
Moreover, by adjunction on $K_X + \Delta$, we obtain $(K_X + \Delta) \cdot C = -2$.  
Therefore, we can conclude that $R$ is not positive on $K_\cF + \Delta^{n\text{-inv}} + \tau (K_X + \Delta)$, a contradiction.  
Hence, the contraction associated with that ray is birational.
\end{proof}

If $(X, \cF, \Delta)$ is not a smooth foliated triple, to obtain similar results we need to bound the Picard number of $X$ from below.

\begin{lemma}\label{biratiobnalcontraction_boundary klt}
Let $(X, \cF, \Delta)$ be a foliated triple where $X$ is a normal $\mQ$-factorial projective surface, $\Delta^{n\text{-inv}} = 0$, $K_\cF$ is big, and $\rho_X \geq 2$.  
Suppose that there exists an extremal ray $R \subset \overline{NE}(X)$ which is $K_{(X, \cF, \Delta)_\tau}$-negative with $\tau \leq \frac{1}{2}$.  
Then $R^2 < 0$.  
Hence, the contraction associated with that ray is birational.
\end{lemma}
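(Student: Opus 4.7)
The plan is to argue by contradiction, following the strategy of Lemma~\ref{rem: smooth fib} and addressing the complications from the singularities of $X$ via the hypothesis $\rho_X \geq 2$. Suppose for contradiction that $R^2 \geq 0$.

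I first dispose of the case $R^2 > 0$. On a $\mQ$-factorial projective surface, any birational elementary contraction contracts an irreducible curve of negative self-intersection, while any elementary fiber-type contraction onto a curve has general fibers with $C^2 = 0$. Hence $R^2 > 0$ forces the contraction of $R$ to be onto a point, which gives $\rho_X = 1$ and contradicts the hypothesis.

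Now assume $R^2 = 0$. By Theorem~\ref{conetheoremsurfaces}(1), the contraction $\pi \colon X \to Y$ of $R$ is an elementary fiber-type contraction onto a curve, and it is $K_X$-negative. Let $C$ be a general fiber of $\pi$; then $C \cong \mP^1$ and $K_X \cdot C = -2$ by the standard Mori theory of surfaces. Moreover, since $\sing X$ is a finite set of points and the fibers of $\pi$ sweep out $X$, a general $C$ lies entirely in $X^{reg}$; in particular $K_\cF$ is Cartier in a neighborhood of $C$ and $K_\cF \cdot C \in \mZ$. Writing $K_\cF \equiv A + E$ with $A$ ample and $E$ effective (by bigness of $K_\cF$), and choosing $C$ general so that $C \not\subset \supp E$, we have $K_\cF \cdot C = A\cdot C + E\cdot C > 0$, which together with integrality forces $K_\cF \cdot C \geq 1$.

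Combining these observations with $\Delta^{n-inv}=0$ (so that $K_{(X,\cF,\Delta)_\tau} = K_\cF + \tau K_X$), I obtain
\[
K_{(X,\cF,\Delta)_\tau} \cdot C \;=\; K_\cF \cdot C + \tau\,K_X \cdot C \;\geq\; 1 - 2\tau \;\geq\; 0
\]
for $\tau \leq \tfrac{1}{2}$, contradicting the $K_{(X,\cF,\Delta)_\tau}$-negativity of $R$. The main obstacle is securing the integrality of $K_\cF \cdot C$, which is why $C$ must be pushed into $X^{reg}$; once that is done, the argument is a direct extension of the smooth case of Lemma~\ref{rem: smooth fib}, with the hypothesis $\rho_X \geq 2$ taking care of the new possibility $R^2 > 0$ that the normal $\mQ$-factorial setting does not exclude automatically.
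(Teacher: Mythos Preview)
Your argument is correct and follows essentially the same route as the paper: assume $R^2\geq 0$, use $\rho_X\geq 2$ to force $R^2=0$, then compute on a general fibre $C$ to obtain $K_\cF\cdot C\geq 1$ and $K_X\cdot C=-2$, contradicting $K_{(X,\cF,\Delta)_\tau}\cdot C<0$.

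One small point: your invocation of Theorem~\ref{conetheoremsurfaces}(1) is not quite on target, since that statement concerns $K_\cF+\Delta$-negative rays (with $\Delta=\Delta^{n\text{-inv}}$), whereas here $R$ is only assumed $K_{(X,\cF,\Delta)_\tau}$-negative. The conclusion you want---that the contraction is $K_X$-negative---follows instead from your own later computation: once $R^2=0$ the class of a general fibre $C$ is nef, so bigness of $K_\cF$ already gives $K_\cF\cdot C>0$, and then $(K_\cF+\tau K_X)\cdot C<0$ forces $K_X\cdot C<0$. Establishing this \emph{before} asserting $C\cong\mP^1$ and $K_X\cdot C=-2$ removes the apparent circularity. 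With that reordering, your proof is complete and matches the paper's.
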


\begin{proof}
By contradiction, suppose that there exists a ray $R$ with $R^2 \geq 0$ such that $K_{(X, \cF, \Delta)_\tau} \cdot R < 0$.  
Since $\rho_X \geq 2$, it follows that $R^2 = 0$.  
Let $\varphi \colon X \to C$ be the contraction associated to $R$ and note that $\dim C = 1$.  
Consider a generic fibre $F$ of $\varphi$; then $K_\cF \cdot F \geq 1$ and $(K_X + \Delta) \cdot F \leq -2$, where the first inequality holds because $F$ is the generic fibre and the second inequality holds by adjunction.  
Combining these two inequalities gives
\[
K_{(X, \cF, \Delta)_\tau} \cdot F 
= K_\cF \cdot F + \tau (K_X + \Delta) \cdot F 
\geq 0,
\]
a contradiction.
\end{proof}

We will use the following bound on the Cartier index of a Fano surface.

\begin{lemma}\cite{AM}\label{lm: index}
Fix $\eta \in \mathbb{R}_{>0}$.  
Let $X$ be a normal projective surface and let $\Delta \ge 0$ be such that $(X, \Delta)$ is $\eta$-lc and $-(K_X)$ is nef and big.  
Then the following holds:
\begin{enumerate}
    \item $\rho_{X_{\text{min}}} \leq \frac{128}{\eta^5}$.
    \item Let $D$ be a Weil divisor on $X$. Then $ND$ is Cartier, where 
    \[
    N = \left\lfloor 2 \cdot \left( \frac{2}{\eta} \right)^{\frac{128}{\eta^5}} \right\rfloor!.
    \]
\end{enumerate}
\end{lemma}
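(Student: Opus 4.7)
The plan is to reduce everything to the Alexeev--Mori boundedness of $\eta$-lc weak log del Pezzo surfaces. First I would pass to the minimal resolution $\sigma \colon X_{\min} \to X$ and use the $\eta$-lc hypothesis on $(X,\Delta)$ to write
\[
K_{X_{\min}} + \widetilde{\Delta} = \sigma^{*}(K_X+\Delta),
\]
where $\widetilde{\Delta}\geq 0$ has coefficients in $[0,1-\eta]$. Since $-(K_X+\Delta)$ is nef and big, so is $-(K_{X_{\min}}+\widetilde{\Delta})$, placing $(X_{\min},\widetilde{\Delta})$ among smooth $\eta$-lc weak log del Pezzo pairs.

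For item (1), I would invoke the Alexeev--Mori bound on the Picard number of a smooth $\eta$-lc weak del Pezzo surface. The quantitative form of that argument (bounding $K_{X_{\min}}^2$ from above in terms of $\eta$, then bounding the number of $(-n)$-curves for $n\leq 2/\eta$ in the exceptional locus) yields the explicit estimate $\rho(X_{\min})\leq 128/\eta^{5}$.

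For item (2), fix a Weil divisor $D$ on $X$. For each $P\in \mathrm{Sing}(X)$, the germ $(X,P)$ is an $\eta$-klt surface singularity, hence a cyclic quotient singularity whose exceptional chain over $X_{\min}$ has length bounded by $\rho(X_{\min})\leq 128/\eta^{5}$ and self-intersections bounded by $-2/\eta$. The order of the local class group $\mathrm{Cl}(X,P)$ is the determinant of the associated intersection matrix, so a direct Hadamard-type estimate gives
\[
|\mathrm{Cl}(X,P)|\;\leq\; 2\left(\frac{2}{\eta}\right)^{128/\eta^{5}}.
\]
The Cartier index of $D$ divides the least common multiple of the orders $|\mathrm{Cl}(X,P)|$ over the (finitely many, bounded number of) singular points, and this lcm clearly divides $N \coloneqq \lfloor 2(2/\eta)^{128/\eta^{5}}\rfloor!$, which is the desired bound.

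The main obstacle is bookkeeping the explicit constants: the qualitative statements (boundedness of the Picard number of the minimal resolution and of the local class groups of $\eta$-klt surface germs) are standard consequences of Alexeev's work on boundedness of log surfaces, but one must carefully track how $\eta$ enters the various estimates (the bound on $(-n)$-curves, the length of the exceptional chain, and the determinant of its intersection matrix) to arrive at the stated form of $N$.
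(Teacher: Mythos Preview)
Your approach is essentially the same as the paper's: pass to the minimal resolution, invoke the Alexeev--Mori Picard number bound for item~(1), then bound the determinant of the exceptional intersection matrix (using the $\eta$-lc bound $-E^2 \leq 2/\eta$ on exceptional self-intersections) to control the local class group orders and hence the Cartier index. Two minor slips worth correcting: the hypothesis is that $-K_X$ (not $-(K_X+\Delta)$) is nef and big, and klt surface singularities are quotient but not necessarily \emph{cyclic} quotient singularities---neither affects the substance of the argument.
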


The proof of this lemma is contained in \cite{AM}; see also \cite[after Theorem~A]{lai12}.  
For the reader's convenience, we present a sketch of the proof here.

\begin{proof}
Since $X$ is $\eta$-lc, it is klt.  
As klt surface singularities are quotient singularities, it follows that $X$ is $\mQ$-factorial.  
Consider a minimal resolution of singularities $p \colon X_{\rm min} \to X$, and let $(E_{ij})$ be the intersection matrix of $\Exc(p)$.  
Therefore, $(E_{ij})$ is a square matrix of dimension at most $\rho_{X_{\text{min}}} \leq \frac{128}{\eta^5}$ by \cite[Theorem~1.3]{AM}.  
By \cite[Theorem~4.6]{bua}, the order of the divisor class group divides $\det(E_{ij})$.  
Consider an exceptional curve $E$ on $X_{\text{min}}$ over $X$; then 
\[
1 \leq -E^2 \leq \frac{2}{\eta}
\]
by \cite[Lemma~1.2]{AM}.  
Hence,
\[
\det(E_{ij}) \leq 2 \cdot \left( \frac{2}{\eta} \right)^{\rho_{X_{\text{min}}}} 
\leq 2 \cdot \left( \frac{2}{\eta} \right)^{\frac{128}{\eta^5}}.
\]
\end{proof}

\subsection{Behaviour of the pseudo-effective threshold}
       
	 Let $(X,\cF,\Delta)$ be a foliated triple where $X$ is a projective surface, $\cF$ is a rank one foliation, and $\Delta \geq 0$ is a $\mQ$-boundary. Assume that $K_{\cF}+\Delta^{n\text{-inv}}$ is big. 

We want to study the behaviour of the pseudo-effective threshold,
\begin{equation}
    \label{pseff}
    \tau(X,\cF,\Delta) \coloneqq \sup\{t \in \mR \mid K_{(X,\cF,\Delta)_t} \text{ is pseudo-effective}\}.
\end{equation}
If there is no possibility of confusion, we simply denote this threshold by $\tau$. We aim to find an effective upper bound for $\tau(X,\cF,\Delta)$.

First, we will prove that $\tau(X,\cF,\Delta)$ belongs to a DCC set (cf. Theorem \ref{lm: DCC}). Secondly, we will find an effective lower bound for $\tau(X,\cF, \Delta)$ if $\Delta^{n-inv}=0$ (cf. Theorem \ref{bound lo abbiamo}).

\begin{example}
\label{caso P^2}
Let $\cF$ be a foliation on $\mP^2$. By \cite[Chapter 2, Section 3.3]{brunella2015birational}, $\cF$ is determined by its degree $d(\cF)$, i.e., the number of tangency points of $\cF$ with a line $L \subset \mP^2$ that is not $\cF$-invariant:
\[
d(\cF) \coloneqq \tan(\cF, L).
\]
Furthermore, $K_\cF = (d(\cF)-1)H$, where $H$ is a general line in $\mP^2$. Therefore, $K_\cF$ is nef and big if and only if $d(\cF) \geq 2$.

Since $H$ spans the movable cone, the pseudo-effective threshold is
\[
\tau = \frac{d(\cF)-1}{3}.
\]
Hence, the adjoint divisor $K_\cF + \varepsilon K_X$ is pseudo-effective for every $\varepsilon \leq \frac{d(\cF)-1}{3}$.
\end{example}

\subsubsection*{DCC property of the set of pseudo-effective thresholds}

Fix $I \subseteq [0,1]$ and $\varepsilon>0$. Consider the set of all pseudo-effective thresholds $\tau(X,\Delta,\cF)$:

\begin{multline}
\label{set}
\cR_{2,\eta,I,\varepsilon} \coloneqq \{\tau(X,\cF,\Delta) \mid \dim X=2, \\(X,\cF,\Delta) \text{ is a foliated triple with }
\varepsilon\text{-adjoint log canonical singularities}, \\
K_\cF + \Delta^{n\text{-inv}} \text{ big}, (X,\Delta) \text{ is } \eta\text{-lc}, \Delta \in I \cap [0,1]\}.
\end{multline}
In this section, we investigate some properties of $\cR_{2,\eta,I}$.

\begin{theorem}
\label{lm: DCC}
Fix a real number $\eta > 0$. Then:
\begin{enumerate}
    \item If $I \subset [0,1]$ is a finite set, then $\cR_{2, \eta, I,\varepsilon} \cap (0, \delta)$ is finite for any $\delta > 0$;
    \item If $I \subset [0,1]$ is a DCC set, then $\cR_{2, \eta, I,\varepsilon}$ satisfies the DCC.
\end{enumerate}
\end{theorem}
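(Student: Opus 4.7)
The plan is to reduce $(X,\cF,\Delta)$ via the adjoint MMP to a Mori fibre space on whose general fibre the adjoint divisor at parameter $\tau$ vanishes, then to write $\tau$ as an explicit rational function of intersection numbers on this fibre and exploit DCC (or discreteness) of these numbers. Concretely, given $(X,\cF,\Delta)$ with threshold $\tau$, fix $0<\delta\ll 1$ so that $K_{(X,\cF,\Delta)_{\tau+\delta}}$ is not pseudoeffective and apply Theorem \ref{th: adjoint doppia foliazione} with parameter $\tau+\delta$; this produces a birational contraction $\varphi\colon X \to Y$ and a Mori fibre space $\pi\colon Y\to Z$ with $-K_{(Y,\cF',\Delta')_{\tau+\delta}}$ $\pi$-ample, where $\cF'=\varphi_*\cF$ and $\Delta'=\varphi_*\Delta$. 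Letting $\delta\to 0$ gives $K_{(Y,\cF',\Delta')_\tau}\cdot F=0$ on a general fibre $F$, while items~(3)--(5) of the theorem ensure $(\cF',{\Delta'}^{n-inv})$ is log canonical and $(Y,\Delta')$ is $\eta'$-lc with $\eta'=\eta(\tau+\delta)/(\tau+\delta+1)$.

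Two cases arise. If $\dim Z=1$, then $F\cong\mP^1$ lies in the smooth locus of $Y$ with $F^2=0$; the Mori fibre space condition gives $d:={\Delta'}^{n-inv}\cdot F\in[0,2)$. An $\cF'$-invariant general fibre would force $\cF'=T_{Y/Z}$, so $K_{\cF'}\cdot F=-2$ and then $d=2$, a contradiction; hence $F$ is transverse to $\cF'$, and restricting $0\to\cF'\to T_Y\to N_{\cF'}\to 0$ to $F$ (using $N_{F/Y}\cong\cO_F$) yields $m:=K_{\cF'}\cdot F\in\mZ_{\ge 0}$. Writing ${\Delta'}^{n-inv}=\sum d_j D'_j$ with $d_j\in I$, one has $d=\sum d_j n_j$ with $n_j:=D'_j\cdot F\in\mZ_{\ge 0}$, and the triviality condition yields
\[\tau=\frac{m+d}{2-d}.\]
If $\dim Z=0$, then the Picard number of $Y$ is $1$ and $Y$ is an $\eta'$-lc del Pezzo surface, hence lies in a bounded family by BAB (Birkar), and Lemma~\ref{lm: index} bounds the index of Weil divisors by some $N=N(\eta')$. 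Writing $K_{\cF'}\equiv aH$, $K_Y\equiv bH$, ${\Delta'}^{n-inv}\equiv cH$ for the ample generator $H$ of $N^1(Y)$ gives $\tau=-(a+c)/(b+c)$ with $a,b\in\tfrac{1}{N}\mZ$ and $c$ a non-negative expression in the $d_j$ with bounded rational weights.

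The map $(m,d)\mapsto(m+d)/(2-d)$ is strictly increasing in each argument on $\mZ_{\ge 0}\times[0,2)$, so any strictly decreasing sequence of thresholds in the fibration case forces $m_k$ to be eventually constant (since $m_k\le 2\tau_1$ is a bounded integer) and then $d_k$ to be strictly decreasing; but the set of non-negative integer combinations bounded by $2$ of elements of a DCC set is itself DCC, a contradiction, yielding~(2). For~(1), when $I$ is finite, $m\le 2\delta$ and $n_j\le 2/\min(I\setminus\{0\})$ each take finitely many values, so $\tau$ does too. The Fano case yields to the same dichotomy after stratifying the bounded family of $Y$ into pieces of fixed numerical type and invoking the DCC/finiteness of the coefficient data. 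The main obstacle is controlling Case~A as $\tau_k\to 0$, where $\eta_k'\to 0$ and the BAB-family grows: here one must use the DCC hypothesis on $I$ against the (potentially growing) Cartier index $N(\eta_k')$ to show that such accumulation from above is excluded.
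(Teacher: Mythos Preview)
Your approach has two genuine gaps.

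\textbf{First, the limit $\delta\to 0$ does not work as written.} The birational model $Y$ you produce by running the $(\tau+\delta)$-adjoint MMP depends on $\delta$, so you cannot fix $Y$ and then send $\delta\to 0$. What you actually obtain, for each fixed $\delta$, is a Mori fibre space $(Y,\cF',\Delta')$ whose pseudo-effective threshold $t_0$ satisfies $\tau\le t_0<\tau+\delta$; your formula $(m+d)/(2-d)$ (or its Fano analogue) computes $t_0$, not $\tau$. Even if you then argue that $\tau$ lies in the closure of the set of such $t_0$, the finiteness assertion in part~(1) does not follow, and you would also need to verify that each $(Y,\cF',\Delta')$ still satisfies the fixed $\varepsilon$-adjoint lc hypothesis defining $\cR_{2,\eta,I,\varepsilon}$ (Theorem~\ref{th: adjoint doppia foliazione}(5) only preserves $\varepsilon$-adjoint lc under the $\varepsilon$-MMP, not the $(\tau+\delta)$-MMP).

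The paper avoids this entirely by running the adjoint MMP at the parameter $\tau$ itself. Since $K_{(X,\cF,\Delta)_\tau}$ is pseudo-effective, this terminates with $K_{(Y,\cF',\Delta')_\tau}$ nef; by definition of the threshold it is not big, and the paper asserts it is semi-ample, hence induces a contraction $f\colon Y\to Z$. On a general fibre $F$ one then has the exact numerical triviality
\[
K_F+\Delta'|_F+\tfrac{1}{\tau}\bigl(K_\cG+{\Delta'}^{n-inv}\bigr)|_F\equiv 0,
\]
with no limiting procedure. This also unifies your two cases: $F$ is either a curve or $F=Y$, and in both cases one intersects with a suitable very ample Cartier divisor $A$ to extract an expression for $1/\tau$ as a ratio of a bounded quantity minus a DCC sum over a positive integer plus a DCC sum.

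\textbf{Second, your Fano case is left open.} You explicitly flag the obstruction that as $\tau_k\to 0$ the $\eta'_k$-lc control degenerates and the BAB family grows without bound, and you do not resolve it. The paper's uniform treatment via the semi-ample fibration sidesteps your case split, and the ACC argument for $1/\tau$ proceeds directly from the displayed formula.

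A minor additional point: in your fibration case the second summand of the adjoint divisor is $\tau(K_Y+\Delta')$, not $\tau(K_Y+{\Delta'}^{n-inv})$, so the denominator in your formula should involve $\Delta'\cdot F$ rather than $d={\Delta'}^{n-inv}\cdot F$; these need not agree unless you separately argue that a general fibre misses ${\Delta'}^{inv}$.
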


\begin{proof}
Let $(X,\cF,\Delta)$ be a pair as in the definition of $\cR_{2,\eta,I\varepsilon}$. Since $(X,\Delta)$ is $\eta$-lc, $\Delta \in [0, 1-\eta]$. Let $\tau \coloneqq \tau(X,\cF,\Delta)$. By Theorem \ref{th: adjoint doppia foliazione}, we can run a $K_{(X,\cF,\Delta)_\tau}$-MMP
\[
\phi: X \longrightarrow Y,
\]
where $Y$ is a projective surface with $\eta'$-lc singularities for some $\eta'>0$. Note that
\[
K_\cG + \Delta'^{n\text{-inv}} + \tau(K_Y + \Delta') \sim_{\mR} K_Y + \Delta' + \frac{1}{\tau}(K_\cG + \Delta'^{n\text{-inv}}),
\]
where $\Delta' \coloneqq \phi_* \Delta$.

By the definition of $\tau$, $K_Y + \Delta' + \frac{1}{\tau}(K_\cG + \Delta'^{n\text{-inv}})$ is not big. Moreover, it is semi-ample and defines a contraction $f: Y \to Z$. Let $F$ be the general fibre of $f$, then $\dim F > 0$. By definition of $\tau$, restricting to $F$ gives
\[
K_F + \Delta'|_F + \frac{1}{\tau} (K_\cG + \Delta'^{n\text{-inv}})|_F \equiv_\mR 0.
\]
By adjunction, $(F, \Delta'|_F)$ is $\eta'$-log canonical. In particular, $F$ belongs to a bounded family. Hence, we may find a very ample Cartier divisor $A$ on $F$ such that $-K_F \cdot A^{\dim F - 1}$ is bounded. Moreover,
\[
d \coloneqq K_\cG|_F \cdot A^{\dim F - 1} = (\phi_* K_\cF)|_F \cdot A^{\dim F - 1}
\]
is a bounded integer. Intersecting with $A^{\dim F - 1}$, we obtain
\[
c = \sum a_j b_j + \frac{1}{\tau} \left( d + \sum a_j b_j \right), \quad
\frac{1}{\tau} = \frac{1}{d + \sum a_j b_j} \cdot \left( c - \sum a_j b_j \right),
\]
where $b_j = \Delta'_j|_F \cdot A^{\dim F - 1}$ are positive integers, and $c \coloneqq -K_F \cdot A^{\dim F - 1}$ is bounded.

For (1), note that $I$ is finite and $0 < \tau \leq \delta$. Hence, there are only finitely many possibilities for $\frac{1}{\tau}$, and therefore for $\tau$.

For (2), to show that $\tau$ belongs to a DCC set, it suffices to show that $\frac{1}{\tau}$ belongs to an ACC set. The right-hand side of the equation satisfies the ACC, so $\frac{1}{\tau}$ does as well.
\end{proof}

\begin{remark}
If $\Delta$ has coefficients in $\mQ$, then the pseudo-effective threshold $\tau(X,\cF,\Delta) \in \mQ$.
\end{remark}

\begin{remark}
Lemma \ref{lm: DCC} allows us to recover \cite[Theorem 4.1]{effective}. Given a smooth foliated triple $(X,\cF,\Delta)$ where $\Delta \in I$ and $I \subset [0,1]$ is a DCC set, by \cite[Proposition 4.6]{effective} we can find a finite subset $J \subset [0,1) \cap I$ such that the following holds:

Let $(X,\cF,\Delta)$ be a smooth foliated triple where $K_\cF + \Delta^{n\text{-inv}}$ is big and $\Delta \in I$. Then there exists a boundary $0 \leq \Delta' \leq \Delta$ with $\Delta' \in J$ such that $K_\cF + \Delta'$ is big. Applying Lemma \ref{lm: DCC} concludes the argument.
\end{remark}
        \subsubsection{Numerical bound for $\tau$}
	
	In this section we will prove an effective lower bound for $\tau(X,\cF)$, cf. Theorem \ref{bound lo abbiamo}.

		\begin{lemma}
			\label{lm: cartier idex mmmp}
            Fix $\varepsilon>0.$
			Let $(X,\cF,\Delta)$ be a foliated triple where $(X,\Delta)$ is a $\eta$-lc pair, $X$ is a normal projective surface, $\cF$ a rank one foliation on $X$. Consider a $K_{(X,\cF,\Delta)_\varepsilon}$-MMP, i.e. $\rho:(X,\cF,\Delta)\to (X',\cF',\Delta')$.
		Then $({X'},\Delta')$ has $\frac{\eta\varepsilon}{1+\varepsilon}$-lc singularities.

            Moreover, suppose that   $-K_{X'}$ is nef and big.
           Let $D\subset X'$ be a Weil divisor and let $N_D$ be its Cartier index. Then $N_D$ divides
				\[N \coloneqq \left\lfloor 2 \cdot \left( \frac{2}{\frac{\eta\varepsilon}{1+\varepsilon}} \right)^{\frac{128}{\left(\frac{\eta\varepsilon}{1+\varepsilon}\right)^5}} \right\rfloor!.
                \]
                
		  In particular, $NK_{\mathcal F'}$ is Cartier.
		\end{lemma}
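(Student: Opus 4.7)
The proof splits naturally into two parts corresponding to the two claims. For the first claim about the singularities of $(X',\Delta')$, my plan is to apply Proposition \ref{cor: sing} directly. The existence of the $K_{(X,\cF,\Delta)_\varepsilon}$-MMP, together with the factorisation recorded in Theorem \ref{th: adjoint doppia foliazione}, provides the hypotheses needed for Proposition \ref{cor: sing}, namely that $(\cF,\Delta^{n-inv})$ is log canonical and that $(X,\cF,\Delta)$ has $\varepsilon$-adjoint log canonical singularities; the proposition then yields that $(X',\Delta')$ is $\frac{\varepsilon\eta}{\varepsilon+1}$-lc, as claimed.

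For the Cartier index bound, I would apply Lemma \ref{lm: index}(2) to the pair $(X',\Delta')$ with the parameter $\eta$ replaced by $\eta' \coloneqq \frac{\varepsilon\eta}{\varepsilon+1}$. All of the required hypotheses are in place: $X'$ is normal and projective as the output of an MMP on projective surfaces, $(X',\Delta')$ is $\eta'$-lc by the first part, and $-K_{X'}$ is nef and big by assumption. Substituting $\eta'$ for $\eta$ in the expression from Lemma \ref{lm: index} produces exactly the integer $N$ in the statement, and one concludes that the Cartier index $N_D$ of any Weil divisor $D$ on $X'$ divides $N$.

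For the last assertion, recall from the preliminaries that one has the equality of Weil divisors $K_{X'} = K_{\cF'} - N_{\cF'}$ on the normal surface $X'$, so that $K_{\cF'}$ is a well-defined Weil divisor. Applying the preceding paragraph with $D = K_{\cF'}$ then shows that $N K_{\cF'}$ is Cartier.

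There is no substantive obstacle beyond bookkeeping: the main point is to verify that the setup of the $K_{(X,\cF,\Delta)_\varepsilon}$-MMP really does satisfy all the hypotheses of Proposition \ref{cor: sing}, so that the $\eta$-lc bound transforms correctly into the $\eta'$-lc bound on $X'$. Once this is in hand, the Cartier-index statement is a direct substitution into Lemma \ref{lm: index}(2), and the final claim is an instance of that bound applied to the Weil divisor $K_{\cF'}$.
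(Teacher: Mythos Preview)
Your approach is essentially the same as the paper's: the paper's proof is the single line ``The statement follows from Theorem~\ref{th: adjoint doppia foliazione}'', whose item~(4) (established via Proposition~\ref{cor: sing}) gives the $\frac{\eta\varepsilon}{1+\varepsilon}$-lc claim. Your explicit invocation of Lemma~\ref{lm: index} for the Cartier-index bound and the observation that $K_{\cF'}$ is a Weil divisor simply make transparent the steps the paper leaves implicit.
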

		\begin{proof}
            The statement follows from Theorem \ref{th: adjoint doppia foliazione}.
		\end{proof}

		\begin{proposition} 
			\label{bound lo abbiamo}
			Fix $\eta>0.$
			Let $(X, \cF, \Delta)$ be a foliated triple where $(X,\Delta)$ is $\eta$-lc on and $(\cF,\Delta^{n-inv})$ is a rank one log canonical foliation on $X$ such that $K_\cF$ is big. Then one of the following occurs:
			\begin{enumerate}
				\item if $K_X+\Delta$ is pseudo-effective, then $K_{(X,\cF,\Delta)_\varepsilon}$ is pseudo-effective for every $\varepsilon \in [0,+\infty)$;
				\item  if $K_X+\Delta$ is not pseudo-effective fix $\varepsilon>0$, consider a $K_{(X,\cF,\Delta)_\varepsilon}$-MMP, $\rho:(X,\Delta)\to (X',\Delta')$. Let $\cF'$ be the induced foliation on $X'$, and $\Delta'\coloneqq\varphi_*\Delta$. Then one of the following occurs:
				
				\begin{enumerate}
					\item if $K_{(X,\cF,\Delta)_\varepsilon}$ is pseudo-effective, then $K_{(X',\cF',\Delta')_\varepsilon}$ is nef and $$\tau(X',\cF',\Delta')\geq \varepsilon;$$
					\item  if $K_{(X,\cF,\Delta)_\varepsilon}$ is not pseudo-effective, then $X$ is birational via $\rho$ to a Mori fibre space, $f:X'\to Y$.
                    Suppose moreover that  $\Delta^{n-inv}=0$, then we have the following
					\begin{enumerate}
						\item $Y=C$ is a curve, and  $\tau(X',\cF',\Delta')>\frac{1}{2}$;
						\item $Y=P$ a point. Moreover $X'$ is a Fano surface with $\rho_{X'}=1$ and $(X,\Delta')$ has $\frac{\eta\varepsilon}{1+\varepsilon}$-lc singularities.

					\end{enumerate}
				\end{enumerate}
				
			\end{enumerate}
			
		\end{proposition}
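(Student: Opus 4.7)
The plan is to dispatch each case via the $K_{(X,\cF,\Delta)_\varepsilon}$-MMP from Theorem \ref{th: adjoint doppia foliazione}.

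For part (1), $K_\cF+\Delta^{n-inv}$ is big (since $K_\cF$ is big and $\Delta^{n-inv}\geq 0$) and $K_X+\Delta$ is pseudo-effective by assumption, so the non-negative combination $K_{(X,\cF,\Delta)_\varepsilon}$ is pseudo-effective for every $\varepsilon\in[0,\infty)$.

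For part (2), apply Theorem \ref{th: adjoint doppia foliazione} to produce a $K_{(X,\cF,\Delta)_\varepsilon}$-MMP $\rho\colon X\to X'$. In case (a), Theorem \ref{th: adjoint doppia foliazione}(1) gives $K_{(X',\cF',\Delta')_\varepsilon}$ nef, hence pseudo-effective, and $\tau(X',\cF',\Delta')\geq\varepsilon$ follows from the definition of the threshold. In case (b), Theorem \ref{th: adjoint doppia foliazione}(2) supplies an MFS $f\colon X'\to Y$ with $\rho(X'/Y)=1$. The first key observation, crucial for what follows, is that this MFS cannot occur in the $f_0$-phase of the factorisation in Theorem \ref{th: adjoint doppia foliazione}: the $f_0$-phase contracts only $K_\cF+\Delta^{n-inv}$-non-positive curves, and since $K_\cF+\Delta^{n-inv}$ is big (hence pseudo-effective), no fibre-type contraction can terminate the $f_0$-phase. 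Consequently the MFS lies in the $f_1$-phase, so it is $K_{X_0}+\Delta_0$-negative and $K_{\cF_0}+\Delta_0^{n-inv}$-positive. Now assume $\Delta^{n-inv}=0$.

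In subcase (ii) ($Y$ a point), the MFS forces $\rho(X')=1$. The $K_{X'}+\Delta'$-negativity on the unique ray gives $-(K_{X'}+\Delta')$ ample; since $\rho(X')=1$ forces $\Delta'\geq 0$ to be nef (either ample if nonzero, or zero), the combination $-K_{X'}=-(K_{X'}+\Delta')+\Delta'$ is ample, so $X'$ is Fano. The $\tfrac{\eta\varepsilon}{1+\varepsilon}$-lc bound on $(X',\Delta')$ is Theorem \ref{th: adjoint doppia foliazione}(4).

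In subcase (i) ($Y=C$ a curve), $\rho(X')=2$ and the general fibre $F\cong\mP^1$ of $\pi\colon X'\to C$ satisfies $F^2=0$ and $K_{X'}\cdot F=-2$. The $f_1$-phase location of the MFS gives $K_{\cF'}\cdot F>0$, and the Cartier-ness of $K_{\cF'}$ near the generic $F$ forces $K_{\cF'}\cdot F\in\mZ_{\geq 1}$. To upgrade this to $\tau(X',\cF',\Delta')>1/2$, I would carry out a case analysis on whether $F$ is $\cF'$-invariant: in the invariant case, combine the foliated adjunction $n^*K_{\cF'}=K_F+\Theta'$ (with $\Theta'\geq 0$) with the log canonical structure on $(\cF',\Delta'^{n-inv})$ inherited from Theorem \ref{th: adjoint doppia foliazione}(3) and the MFS constraint $(K_{X'}+\Delta')\cdot F<0$ to control $\deg\Theta'$; in the transverse case, apply Lemma \ref{lm: disc foliaz uguale inv} to identify $K_{\cF'}\cdot F$ with a tangency count and combine with the $f_1$-phase positivity. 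Both analyses target the sharp bound $K_{\cF'}\cdot F\geq 2$, which then yields $\tau\geq K_{\cF'}\cdot F/(-K_{X'}\cdot F)\geq 1>1/2$ via intersection with $F$, with the complementary nef-ray constraint controlled by the bigness of $K_{\cF'}$ on $X'$ (preserved through $f_0$ by the negativity lemma and through $f_1$ since only $K_{\cF'}$-positive curves are contracted there). The main obstacle is precisely this sharp bound $K_{\cF'}\cdot F\geq 2$, which genuinely uses the refined foliated adjunction and the singularity structure inherited from the MMP, rather than mere integrality.
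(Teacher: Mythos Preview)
Your treatment of (1), (2)(a), and (2)(b)(ii) matches the paper essentially verbatim; the divergence is entirely in (2)(b)(i).

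The paper's argument there is a one-liner: apply Lemma~\ref{biratiobnalcontraction_boundary klt}. That lemma says that on a surface with $\rho\geq 2$ and $K_{\cF}$ big, any $K_{(X,\cF,\Delta)_t}$-negative extremal ray with $t\leq\tfrac{1}{2}$ must have $R^2<0$. Since the Mori-fibre class $F$ satisfies $F^2=0$ and is $K_{(X',\cF',\Delta')_\varepsilon}$-negative, the contrapositive forces $\varepsilon>\tfrac{1}{2}$. The only numerical inputs are exactly the two you already recorded: $K_{\cF'}\cdot F\in\mZ_{\geq 1}$ (bigness plus Cartier-ness along the generic smooth fibre) together with $(K_{X'}+\Delta')\cdot F\geq -2$ (from $K_{X'}\cdot F=-2$ and $\Delta'\cdot F\geq 0$). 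No invariant/transverse dichotomy, no foliated adjunction on $F$, and no attempt to reach $K_{\cF'}\cdot F\geq 2$ is made. Your ``main obstacle'' is therefore a phantom: the integrality bound $K_{\cF'}\cdot F\geq 1$ you already have is the whole story, and your proposed case analysis via Lemma~\ref{lm: disc foliaz uguale inv} is unnecessary.

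There is also a genuine gap in your final step. The inequality $\tau(X',\cF',\Delta')\geq K_{\cF'}\cdot F/(-K_{X'}\cdot F)$ is not a valid description of the pseudo-effective threshold: the right-hand side is merely the value of $t$ at which the intersection with the single class $[F]$ vanishes, and pseudo-effectivity on a Picard-rank-two surface is not decided by one ray. Your parenthetical about the ``complementary nef-ray constraint controlled by the bigness of $K_{\cF'}$'' gestures at what would be required but does not supply it. (For what it is worth, the paper's own proof literally concludes $\varepsilon>\tfrac{1}{2}$ and then declares this to be item (2)(b)(i); downstream, in Theorem~\ref{co: bound pseff}, only the contrapositive is ever used---one fixes $\varepsilon\leq\tfrac{1}{2}$ and concludes the curve-base case cannot occur---so the distinction between $\varepsilon>\tfrac{1}{2}$ and $\tau(X',\cF',\Delta')>\tfrac{1}{2}$ never bites in the applications.)
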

		
		\begin{proof}
			First note that $K_\cF+\Delta^{n-inv}$ is pseudo-effective, therefore we need to consider the following two cases:
			\begin{enumerate}
				\item $K_X+\Delta$ pseudo-effective;
				\item $K_X+\Delta$ not pseudo-effective.
			\end{enumerate}
			If $K_X+\Delta$ is pseudo-effective, then 
			$K_{(X,\cF,\Delta)_\varepsilon}$ is pseudo-effective for every $\varepsilon\geq 0.$ Therefore $\tau=+\infty$ and we get case (1) of the statement.

			Suppose now that $K_X+\Delta$ is not pseudo-effective, fix $\varepsilon>0$ and consider a $K_{(X,\cF,\Delta)_\varepsilon}$-MMP with that exists by Theorem \ref{th: adjoint doppia foliazione}:
			\[
			\xymatrix{X	\ar[rr]^{\phi}\ar[rrd]_{f}& 		&	X' \ar[d]^{f'}\\
				&	&Y.
			}	
			\] 
			Denote by $\cF'$ the foliation induced on $X'$ and with $\Delta'\coloneqq\phi_*\Delta$.
			By hypothesis $K_X+\Delta$ is not pseudo-effective and $K_\cF+\Delta^{n-inv}$ is big. If $K_{(X,\cF,\Delta)_\varepsilon}$ is pseudo-effective, then $K_{(X',\cF',\Delta')_\varepsilon}$ is nef. We obtain case (2).(a) of the statement.
			If $K_{(X,\cF,\Delta)_\varepsilon}$ is not pseudo-effective then the MMP will end with a Mori contraction of fibre type $f'$ over $Y$ with $\dim Y\leq 1$ where the fibration $f'$ is $K_{\cF'}$-non negative.
            
			We have to consider the following two cases:
			\begin{enumerate}
				\item $\dim Y=1;$
				\item $\dim Y=0.$
			\end{enumerate}
			Suppose that $\dim Y=1$, i.e. the adjoint MMP ends with a $K_X+\Delta$-negative contraction of fibre type over a curve $C.$ Let $R$ be the ray associated to $f'$. Since $R^2\geq 0$ and $K_{(X',\cF',\Delta')_\varepsilon}$ is not pseudo-effective, then by Lemma \ref{biratiobnalcontraction_boundary klt} we obtain that $\varepsilon>\frac{1}{2}$. We obtain 2.(b).(i) of the statement.

			Suppose now that $\dim Y=0.$ Then $\rho_{X'}=1$, and $X'$ is a rank one log Fano surface, i.e. $K_{X'}$ is anti-ample. 
			Note that by Lemma \ref{lm: cartier idex mmmp} $(X',\Delta')$ has $\eta'$-lc singularities with $\eta'=\frac{\eta\varepsilon}{1+\varepsilon}.$ 
		
		\end{proof}

		\begin{lemma}
			\label{indietro termianl}
			Fix $0\leq \varepsilon $.
			Let $(X,\cF,\Delta)$ be a foliated triple with $K_\cF+\Delta^{n-inv}$ big and $(X,\Delta)$ a log canonical pair. 
		 	Let $f:X\to Y$ is a partial $K_{(X,\cF,\Delta)_\varepsilon}$-MMP which contracts only curves that are $K_\cF+\Delta^{n-inv}$-negative. Let $\cG$ be the induced foliation on $Y$, and $\Delta'=f_*\Delta.$  
		 	If $K_{(Y,\cG,\Delta')_\beta}$ is pseudo-effective for some $\beta<\varepsilon$, then $K_{(X,\cF,\Delta)_\beta}$ is pseudo-effective.
		\end{lemma}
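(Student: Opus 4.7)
The plan is to exploit the convex-combination identity
\[
K_{(X,\cF,\Delta)_\beta} = \left(1 - \tfrac{\beta}{\varepsilon}\right)(K_\cF + \Delta^{n-inv}) + \tfrac{\beta}{\varepsilon}\, K_{(X,\cF,\Delta)_\varepsilon},
\]
which writes the $\beta$-adjoint divisor as a nonnegative linear combination of $K_\cF + \Delta^{n-inv}$ and the $\varepsilon$-adjoint divisor whenever $0 \leq \beta \leq \varepsilon$; the same identity transports through $f_*$ on $Y$. The strategy is then to transfer pseudo-effectivity from $Y$ back to $X$ by showing that at each elementary step of the MMP the difference $K_{(X_i,\cF_i,\Delta_i)_\beta} - g_i^* K_{(X_{i+1},\cF_{i+1},\Delta_{i+1})_\beta}$ is an effective multiple of the contracted curve.

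To carry this out, I would first factor $f$ as a composition $X = X_0 \to X_1 \to \cdots \to X_r = Y$ of elementary contractions $g_i$, with $\cF_i \coloneqq g_{i-1\,*}\cF_{i-1}$, $\Delta_i \coloneqq g_{i-1\,*}\Delta_{i-1}$, and contracted curve $C_i$. By the hypothesis that $f$ contracts only $K_\cF + \Delta^{n-inv}$-negative curves one has $(K_{\cF_i} + \Delta_i^{n-inv}) \cdot C_i < 0$, and since $f$ is in particular a $K_{(X,\cF,\Delta)_\varepsilon}$-MMP, also $K_{(X_i,\cF_i,\Delta_i)_\varepsilon} \cdot C_i < 0$. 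Applying the negativity lemma to each of the two divisors (which is available on the normal $\mQ$-factorial surfaces along the MMP via Mumford's intersection pairing) yields
\[
K_{\cF_i} + \Delta_i^{n-inv} = g_i^*(K_{\cF_{i+1}} + \Delta_{i+1}^{n-inv}) + a_i C_i, \qquad K_{(X_i,\cF_i,\Delta_i)_\varepsilon} = g_i^* K_{(X_{i+1},\cF_{i+1},\Delta_{i+1})_\varepsilon} + c_i C_i,
\]
with $a_i, c_i > 0$.

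Taking the same convex combination of these two identities produces
\[
K_{(X_i,\cF_i,\Delta_i)_\beta} = g_i^* K_{(X_{i+1},\cF_{i+1},\Delta_{i+1})_\beta} + \Bigl[\Bigl(1 - \tfrac{\beta}{\varepsilon}\Bigr) a_i + \tfrac{\beta}{\varepsilon} c_i\Bigr] C_i,
\]
whose $C_i$-coefficient is strictly positive since $0 \leq \beta < \varepsilon$. Composing through all $r$ contractions I obtain $K_{(X,\cF,\Delta)_\beta} = f^* K_{(Y,\cG,\Delta')_\beta} + E$ with $E$ an effective, $f$-exceptional divisor. Since the birational pullback of a pseudo-effective class is pseudo-effective and the sum of a pseudo-effective class with an effective one is pseudo-effective, the hypothesis that $K_{(Y,\cG,\Delta')_\beta}$ is pseudo-effective transfers to $K_{(X,\cF,\Delta)_\beta}$. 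The only delicate point is to check that the two negativity-lemma computations combine with compatible pushforwards of the foliation and of $\Delta^{n-inv}$, but this is automatic from the definition of a foliated-triple MMP, so I do not expect any serious obstacle.
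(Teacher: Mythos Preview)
Your proof is correct and follows essentially the same approach as the paper: the paper's one-line proof simply invokes the Negativity Lemma (\cite[Lemma 3.39]{kollár_mori_1998}) using that $\beta<\varepsilon$ and $f$ is $K_\cF+\Delta^{n-inv}$-non-positive. Your version spells out the convex-combination identity and applies the Negativity Lemma to each elementary step, which is exactly what underlies the paper's appeal; a marginally more direct route is to note that the convex combination makes $f$ itself $K_{(X,\cF,\Delta)_\beta}$-negative at every step and then apply the Negativity Lemma once to $K_{(X,\cF,\Delta)_\beta}$, but this is the same argument.
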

		\begin{proof}
			Since $\varepsilon>\beta$ and $f$ is $K_\cF+\Delta^{n-inv}$-non-positive, we can conclude by the Negativity Lemma,  \cite[Lemma 3.39]{kollár_mori_1998}.
		\end{proof}

	\begin{theorem}
		\label{co: bound pseff}
        Let $(X,\cF,\Delta)$ be a foliated triple where $(X,\Delta)$ is $\eta$-lc for some $\eta>0$, $\cF$ is a log canonical foliation with $K_\cF$ is big, and $\Delta^{n-inv}=0.$
		Then the pseudo-effective threshold  satisfies
		
				$$ \tau(X,\cF,\Delta)\geq  \tau'(\eta)\coloneqq \frac{1}{3}\cdot \frac{1}{\left\lfloor2\left( \frac{2}{\frac{\eta\tau}{1+\tau}} \right)^{ \frac{(2)^7}{\left( {\frac{\eta\tau}{1+\tau}} \right)^5}  }\right\rfloor! }~$$

                where \[\tau\coloneqq\frac{1}{3} \cdot\frac{1}{\left\lfloor 2 \cdot \left( \frac{14}{\eta} \right)^{\frac{128}{\left(\frac{\eta}{7}\right)^5}} \right\rfloor!}~.
\]

	\end{theorem}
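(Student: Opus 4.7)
If $K_X + \Delta$ is pseudo-effective, Proposition~\ref{bound lo abbiamo}(1) gives $\tau(X,\cF,\Delta) = +\infty$ and the inequality is immediate. Assume otherwise and set $\tau := \tau(X,\cF,\Delta)$. For every $\varepsilon > \tau$ the divisor $K_{(X,\cF,\Delta)_\varepsilon}$ fails to be pseudo-effective, so by Theorem~\ref{th: adjoint doppia foliazione} I run a $K_{(X,\cF,\Delta)_\varepsilon}$-MMP $\rho_\varepsilon\colon X \dashrightarrow X'_\varepsilon$, which by Proposition~\ref{bound lo abbiamo}(2)(b) terminates in a Mori fibre space. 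Taking $\varepsilon$ close to $\tau$ with $\varepsilon < 1/2$ excludes the fibration-over-a-curve case (that case would force $\varepsilon > 1/2$). Hence $X'_\varepsilon$ is a $\mQ$-factorial Fano surface of Picard rank one, and by Proposition~\ref{cor: sing} the pair $(X'_\varepsilon, \Delta'_\varepsilon)$ is $\eta_\varepsilon$-lc with $\eta_\varepsilon := \eta\varepsilon/(1+\varepsilon)$.

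\noindent\textbf{Threshold on the Fano model.} Lemma~\ref{lm: cartier idex mmmp} ensures that $NK_{X'_\varepsilon}$ and $NK_{\cF'_\varepsilon}$ are Cartier, where $N := N(\eta_\varepsilon)$ is the Cartier index estimate of Lemma~\ref{lm: index}. Since $\rho_{X'_\varepsilon} = 1$, any primitive ample Cartier divisor $L$ on $X'_\varepsilon$ generates $N^1(X'_\varepsilon)_\mQ$; writing $-K_{X'_\varepsilon}\equiv cL$ and $K_{\cF'_\varepsilon}\equiv dL$ with $c,d>0$ and $Nc,\,Nd\in\mZ$ (so $d\ge 1/N$), one computes
\[
\tau(X'_\varepsilon,\cF'_\varepsilon,\Delta'_\varepsilon) \;=\; \frac{K_{\cF'_\varepsilon}\cdot(-K_{X'_\varepsilon})}{K_{X'_\varepsilon}^{\,2}} \;=\; \frac{d}{c}\,.
\]
An effective Fano-boundedness estimate for $\eta_\varepsilon$-lc $\mQ$-factorial del Pezzo surfaces of Picard rank one yields the universal bound $c\le 3$ on the Fano index, whence $\tau(X'_\varepsilon,\cF'_\varepsilon,\Delta'_\varepsilon)\ge 1/\bigl(3N(\eta_\varepsilon)\bigr)$. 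This Fano-boundedness input is the principal technical step of the argument.

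\noindent\textbf{Descent and bootstrap.} Pushforward of pseudo-effectivity through $\rho_\varepsilon$ gives the sandwich $\tau \le \tau(X'_\varepsilon,\cF'_\varepsilon,\Delta'_\varepsilon) \le \varepsilon$, and Lemma~\ref{indietro termianl} applied to the $(K_\cF+\Delta^{n-inv})$-non-positive phase of the MMP (Theorem~\ref{th: adjoint doppia foliazione}(A)) pins $\tau(X)=\tau(X_0)$ so that the estimates are compatible as $\varepsilon$ varies. Letting $\varepsilon \searrow \tau$ forces $\tau(X'_\varepsilon,\cF'_\varepsilon,\Delta'_\varepsilon) \to \tau$, so combining with the preceding paragraph yields the implicit inequality
\[
\tau \;\ge\; \frac{1}{3\,N\!\left(\tfrac{\eta\tau}{1+\tau}\right)}.
\]
Since $\eta\mapsto N(\eta)$ is non-increasing, any effective positive lower bound $\tau\ge\tau_0$ upgrades this to the explicit estimate $\tau\ge 1/\bigl(3N(\eta\tau_0/(1+\tau_0))\bigr)$. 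The rough value $\tau_0 = 1/\bigl(3N(\eta/7)\bigr)$ appearing in the statement is obtained by a preliminary coarser iteration of the same Mori-fibre-space reduction, securing a mild lower bound that forces $\eta\tau/(1+\tau)\ge \eta/7$; substituting this into the implicit bound produces $\tau'(\eta)$ as stated. The main obstacle is the Fano-index bound $c\le 3$, which requires an explicit boundedness statement for Picard-rank-one log-canonical del Pezzos; everything else is MMP bookkeeping assembled from Theorem~\ref{th: adjoint doppia foliazione}, Lemma~\ref{lm: cartier idex mmmp} and Lemma~\ref{lm: index}.
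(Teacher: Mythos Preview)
Your limit argument is internally consistent up to the implicit inequality
\[
\tau \;\ge\; \frac{1}{3\,N\!\left(\tfrac{\eta\tau}{1+\tau}\right)},
\]
but this inequality is \emph{vacuous}: as $\tau\to 0$ the right-hand side decays far faster than $\tau$ (since $N(\eta')$ dominates any power of $1/\eta'$), so every $\tau>0$ satisfies it and no lower bound is obtained. Your bootstrap then needs an independent preliminary bound $\tau\ge\tau_0$ to feed back in, and you assert this comes from ``a preliminary coarser iteration'' forcing $\eta\tau/(1+\tau)\ge\eta/7$; but that last inequality is equivalent to $\tau\ge 1/6$, which is certainly not a coarse a priori bound and is nowhere established. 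So the argument is circular.

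The paper avoids this by fixing $\varepsilon=1/6$ from the outset, landing on a Fano $X'$ that is $(\eta/7)$-lc and has threshold $\ge\tau_0=1/(3N(\eta/7))$. The real difficulty, which your sketch entirely bypasses, is that this estimate cannot be pulled back to $X$ directly: the $1/6$-MMP has a $K_\cF$-\emph{positive} phase ($f_1$ in the refined factorisation of Proposition~\ref{prop: struttura}), and Lemma~\ref{indietro termianl} only handles the $K_\cF$-non-positive phase. The paper deals with this by invoking Proposition~\ref{prop: struttura} to isolate the strictly log canonical centres created by $f_1$, passing to a partial resolution $\widetilde{X}\to X_2$ via Lemma~\ref{lem_resolution_logsmooth_near_lc}, and then running a \emph{second} MMP $g\colon\widetilde X\to Y$ at level $\tau_0$; the crux is the Claim that $g$ is $K_{\widetilde\cF}$-non-positive, proved via a Zariski-decomposition and rigidity argument. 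Only then does Lemma~\ref{indietro termianl} apply and the bound descends, giving $\tau'$ via the Cartier-index estimate on the $\tfrac{\eta\tau_0}{1+\tau_0}$-lc Fano $Y$.

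Finally, the ``main obstacle'' you flag, the bound $c\le 3$, is in fact routine: it is just the cone-theorem length bound for extremal rational curves on klt surfaces. The genuine obstacle is the descent through the $K_\cF$-positive phase, and your proposal does not address it.
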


    \begin{remark}
        A similar result for nef and big divisor has been proven recently by \cite[Theorem 1.4]{bie2025explicit}. If we consider a foliation $\cF$ on a smooth surface $X$ with $K_\cF$ nef and big, we obtain that $\tau(X,\cF)\geq \frac{1}{3}$.
    \end{remark}
    
    \begin{proof}
    
        Set $\tau \coloneqq \frac{1}{3} \cdot\frac{1}{\left\lfloor 2 \cdot \left( \frac{14}{\eta} \right)^{\frac{128}{\left(\frac{\eta}{7}\right)^5}} \right\rfloor!}$ and  $\tau'\coloneqq \frac{1}{3}\cdot \frac{1}{\left\lfloor2\left( \frac{2}{\frac{\eta\tau}{1+\tau}} \right)^{ \frac{(2)^7}{\left( {\frac{\eta\tau}{1+\tau}} \right)^5}  }\right\rfloor! }~$~.

		Consider the divisor $K_\cF+ \frac{1}{6}(K_X+\Delta)$. If  $K_\cF+ \frac{1}{6}(K_X+\Delta)$ is pseudo-effective, then we are done, indeed $
        \frac{1}{6}\leq \tau(X,\cF,\Delta)$. 
		Suppose that $K_\cF+ \frac{1}{6}(K_X+\Delta)$ is not pseudo-effective. By Theorem \ref{th: adjoint doppia foliazione}, we can run a $K_\cF+ \frac{1}{6}(K_X+\Delta)$-MMP, $\rho:X\to X'$.  Denote
        with $\cF'$ the induce foliation on $X'$, and  $\Delta'\coloneqq\rho_{*}\Delta$.
       By Proposition \ref{bound lo abbiamo}, $X'$ is a Mori fibre space; denote with $f:X'\to Z$ the Mori fibre space structure of $X'$.
       If $\dim Z=1$, by Proposition  \ref{bound lo abbiamo} we conclude that $K_{\cF'}+\frac{1}{6} K_{X'}$ is big. 
       By the Negativity Lemma $K_\cF+ \frac{1}{6}(K_X+\Delta)$ is pseudo-effective.

		Suppose that we are in Case 2.b.ii of Proposition \ref{bound lo abbiamo}.  Then $\rho_{X'}=1$, and $X'$ is a Picard rank one Fano surface, i.e. $K_{X'}$ is anti-ample. By Lemma \ref{lm: cartier idex mmmp}.(3), the Cartier index of the foliated canonical divisor $i(\cF')$ divides $ 1\wedge\tau,$
       and $K_{\cF'}+\frac{1}{3}\cdot \tau\cdot(  K_{X'}+\Delta')$
        is pseudo-effective by the Cone theorem for surfaces \cite{kollár_mori_1998}. By Proposition \ref{prop: struttura}, then $\rho$ factors in the following way:
        \[\xymatrix{(X_0,\cF_0,\Delta_0)\ar^{f_0}[r] &(X_1,\cF_1,\Delta_1)\ar^{f_1}[r]&(X_2,\cF_2,\Delta_2)\ar^{f_2}[r]&(X_3 ,\cF_3,\Delta_3)} \]
        where $(X,\cF,\Delta)\coloneqq (X_0,\cF_0,\Delta_0)$, $(X_3,\cF_3,\Delta_3)\coloneqq(X',\cF',\Delta')$,  $\Delta_i\coloneqq f_{i-1*}\Delta_{i-1}$, and
        \begin{enumerate}
 		\item $f_0$ is a $K_\cF$-non-positive contraction;
 		\item $f_1$ is a $K_{\cF_1}$-positive $K_{X_1}+\Delta_1$-negative contraction of disjoint transverse curves;
 		\item $f_2$ is $K_{\cF_2}$-trivial and $K_{X_2}+\Delta_2$-negative contraction of $\cF_2$-invariant curves. 
 	\end{enumerate}
     Since $f_2$ is $K_{\cF_2}$-trivial, $K_{(X_2,\cF_2,\Delta_2)_\tau}$ is pseudo-effective.
        Write 
        \[K_{(X_0,\cF_0,\Delta_0)_\tau}+E_\tau=\rho^{*}K_{(X_2,\cF_2,\Delta_2)_\tau}+F_\tau\]
        where $E_\tau$ and $F_\tau$ are $\rho $-exceptional effective divisors with no common component.
        If $E_\tau=0$, we can conclude.
        Suppose that $E_\tau\neq0$, then there exists a point $P\in X_2$ that is not $\tau$-adjoint lc. Therefore  $P$ is strictly log canonical at $P$ and $f_1$ is contracting a transverse divisor over $P$. Let $\pi:\widetilde{X}\to X_2$ be a partial resolution guaranteed by Lemma \ref{lem_resolution_logsmooth_near_lc},  let $\widetilde{\cF}$ be the pull-back foliation on $\widetilde{X}$, and let $\widetilde{\Delta}\coloneqq\pi^{-1}_*\Delta_2$. Then there exists a map $\widetilde{\rho}:X\to \widetilde{X}$ and $K_{(X,\cF,\Delta)_\beta}=\widetilde{\rho}^*K_{(\widetilde{X},\widetilde{\cF},\widetilde{\Delta})_\beta}+E_\beta$  where $E\geq 0$ is a $\widetilde{\rho}$-exceptional divisor for every $\beta<\frac{1}{6}.$ Therefore, if one proves that $K_{(\widetilde{X},\widetilde{\cF},\widetilde{\Delta})_\beta}$ is pseudo-effective for some $\beta$, than also $K_{(X,\cF,\Delta)_\beta}$ is pseudo-effective.
        If $K_{(\widetilde{X},\widetilde{\cF},\widetilde{\Delta})_\tau}$ is pseudo-effective we can conclude. 
        Otherwise consider a $K_{(\widetilde{X},\widetilde{\cF},\widetilde{\Delta})_\tau} $-MMP: $g:\widetilde{X}\to Y$, let $\cG$ be the induced foliation on $Y$ and $\Theta\coloneqq g_{*}\widetilde{\Delta}$. By Proposition \ref{bound lo abbiamo}, $Y$ is a Fano variety with $\frac{\eta\tau}{1+\tau}$-lc singularities.  
        \[\xymatrix{ & & \widetilde{X}\ar^{\pi}[d]\ar^{g}[r] & Y \\
        X\ar^{\widetilde{\rho}}[rur]\ar_{f_0}[r] & X_1\ar_{f_1}[r]& X_2 \ar_{f_2}[r]& X'
        }\]
        \begin{claim}
            $g$ is $K_{\widetilde{\cF}}$-non positive.
        \end{claim}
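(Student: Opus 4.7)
\medskip

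The goal is to show that the MMP $g\colon \widetilde X\to Y$ does not contract any $K_{\widetilde\cF}$-positive curve. The plan is in three steps.

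First, I would apply the factorisation provided by Theorem \ref{th: adjoint doppia foliazione}(A)--(B) to the MMP $g$, writing it as $g=g_1\circ g_0$, where $g_0$ is a partial $K_{(\widetilde X,\widetilde\cF,\widetilde\Delta)_\tau}$-MMP that contracts only $(K_{\widetilde\cF}+\widetilde\Delta^{n-inv})$-non-positive curves, and $g_1$ contracts curves which are simultaneously $(K_{\widetilde X}+\widetilde\Delta)$-negative and $(K_{\widetilde\cF}+\widetilde\Delta^{n-inv})$-positive. Since $\Delta^{n-inv}=0$, the strict-transform construction gives $\widetilde\Delta^{n-inv}=0$, so $g_0$ is automatically $K_{\widetilde\cF}$-non-positive. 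Thus the claim reduces to showing that $g_1$ is an isomorphism.

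Second, I would exploit the precise discrepancies of the partial resolution $\pi\colon \widetilde X\to X_2$ granted by Lemma \ref{lem_resolution_logsmooth_near_lc}. Because $\pi$ extracts only divisors with foliation discrepancy equal to $-\iota(E)$ and underlying discrepancy $\leq 0$, one obtains
\[
K_{\widetilde\cF}+\widetilde\Delta^{n-inv}=\pi^{*}(K_{\cF_2}+\Delta_2^{n-inv}),\qquad \pi^{*}(K_{X_2}+\Delta_2)-(K_{\widetilde X}+\widetilde\Delta)=\sum d_i E_i\ge 0,
\]
and hence
\[
\pi^{*}K_{(X_2,\cF_2,\Delta_2)_\tau}-K_{(\widetilde X,\widetilde\cF,\widetilde\Delta)_\tau}=\tau\sum d_i E_i\ge 0.
\]
Because $K_{(X_2,\cF_2,\Delta_2)_\tau}$ is nef on $X_2$ (it is the output of the previous MMP on $X_2$), the projection formula forces any $K_{(\widetilde X,\widetilde\cF,\widetilde\Delta)_\tau}$-negative extremal ray on $\widetilde X$ (or an intermediate model of $g$) to meet the $\pi$-exceptional locus $\Exc(\pi)$, and its image in $X_2$ cannot have positive intersection with $K_{(X_2,\cF_2,\Delta_2)_\tau}$.

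Third, suppose for contradiction that $g_1$ is non-trivial and contracts an extremal ray whose strict transform in $\widetilde X$ is a curve $C$. By Proposition \ref{prop: struttura} applied to $g$, $C$ is a disjoint transverse curve with $K_{\widetilde\cF}\cdot C>0$ and $(K_{\widetilde X}+\widetilde\Delta)\cdot C<0$. Using the log smoothness of $(\widetilde X,\widetilde\Delta+\Exc(\pi))$ in a neighbourhood of $\pi^{-1}(P)$ for the strictly log canonical centres $P$ of $(\cF_2,\Delta_2)$, together with the intersection identity above, I would show that $C$ must be supported on a component $E_{i_0}$ of $\Exc(\pi)$. A discrepancy computation, namely
\[
K_{\widetilde\cF}\cdot E_{i_0}=-\iota(E_{i_0})E_{i_0}^{2}-\sum_{j\ne i_0}\iota(E_j)E_{i_0}\cdot E_j,
\]
combined with the log smoothness (which controls the intersection pattern of the $E_j$'s) then contradicts $K_{\widetilde\cF}\cdot C>0$.

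The main obstacle will be the last step: the case analysis on the transverse curves appearing in $g_1$, and in particular showing that the steps of $g_0$ do not create new $K_{\widetilde\cF}$-positive transverse curves over $\Exc(\pi)$ that were not present in $\widetilde X$. This will require a careful bookkeeping of intersection numbers on the exceptional divisors of $\pi$ under $g_0$, and may need the \textit{Negativity Lemma} \cite[Lemma 3.39]{kollár_mori_1998} to propagate the sign information through the partial MMP.
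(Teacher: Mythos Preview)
Your approach differs substantially from the paper's, and there are two genuine gaps that prevent it from closing.

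First, the assertion that $K_{(X_2,\cF_2,\Delta_2)_\tau}$ is nef is incorrect: $X_2$ is an intermediate step of the $\frac{1}{6}$-MMP, not its endpoint, and the paper only establishes that $K_{(X_2,\cF_2,\Delta_2)_\tau}$ is \emph{pseudo-effective} (because $f_2$ is $K_{\cF_2}$-trivial). With mere pseudo-effectivity, your projection-formula argument only shows that a $K_{(\widetilde X,\widetilde\cF,\widetilde\Delta)_\tau}$-negative curve meets $\Exc(\pi)$; it does not force the curve to be a component of $\Exc(\pi)$. Relatedly, the displayed identity $K_{\widetilde\cF}+\widetilde\Delta^{n-inv}=\pi^{*}(K_{\cF_2}+\Delta_2^{n-inv})$ is wrong: Lemma~\ref{lem_resolution_logsmooth_near_lc} gives foliation discrepancy exactly $-\iota(E)$, so the correct relation is $K_{\widetilde\cF}=\pi^{*}K_{\cF_2}-\sum_i \iota(E_i)E_i$ (your later intersection formula is consistent with this, but the two displayed equations are not mutually compatible).

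Second, and more seriously, the discrepancy computation in your third step does not produce a contradiction. Over a strictly log canonical point of $\cF_2$ the partial resolution extracts a \emph{single} transverse exceptional divisor $E_{i_0}$ (this is exactly the situation created by $f_1$); all other $E_j$ have $\iota(E_j)=0$. Your formula then gives
\[
K_{\widetilde\cF}\cdot E_{i_0}=-E_{i_0}^2>0,
\]
which is perfectly compatible with $K_{\widetilde\cF}\cdot C>0$ rather than contradicting it. So even granting that $C=E_{i_0}$, the argument stalls.

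The paper takes a different route: it pulls everything back to the original $X$ via $\widetilde\rho$, takes the strict transform $C'$ of $C$ there, and compares the Zariski decompositions of $K_\cF+\tau(K_X+\Delta)+h^*H$ and $K_\cF+\tfrac{1}{6}(K_X+\Delta)+h^*H$ for $H$ ample on $Y$. Using that $\widetilde\cF$ is log canonical (Proposition~\ref{pr; rimane lc}) so $C$ is transverse, and that $\tau<\tfrac{1}{6}$, one shows $C'$ lies in both negative parts $N_\tau\cap N_{1/6}$; the rigidity lemma then forces $Y=X'$, which is the contradiction. The key idea you are missing is this comparison of the two MMPs at different scales on $X$, rather than an intersection calculation localised on $\Exc(\pi)$.
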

        \begin{proof}
        For sake of contradiction suppose that $g$ is contracting a curve $K_{\widetilde{\cF}}$-positive curve $C.$
            Denote by $h\coloneqq g\circ\widetilde{\rho}:X\to Y$. Note that $\widetilde{\rho}$ is a $K_{(X,\cF,\Delta)_\tau}$-negative and $K_{(X,\cF,\Delta)_\frac{1}{6}}$-negative.
             Since $\widetilde{\cF}$ is log canonical by Proposition \ref{pr; rimane lc}, then $C$ is non-invariant. Let $C'$ be the strict transform of $C $ in $X.$ 
            To conclude, it is enough to show that $C'\subset N_\tau\cap N_\frac{1}{6}$ where $K_\cF + \tau (K_X +\Delta)+h^*H=N_\tau+P_\tau$  and $K_\cF + \frac{1}{6} {(K_X+\Delta)} +h^*H=N_\frac{1}{6}+P_\frac{1}{6}$ where are the Zariski decomposition of $K_\cF + \tau (K_X+\Delta)$ and $K_\cF + \frac{1}{6} (K_X+\Delta)$ respectively, where $H$ is sufficiently ample divisor on $Y.$
            By Lemma \ref{lm: zariski}, then we can factor $h$ in the following way: $h=h_1\circ h_0$  where $h_0$ is $K_X+\Delta$-non positive and $h_1$ is $K_{\cF_1}$-negative and $K_{X_1}+\Delta_1$-non-negative. Since $\tau<\frac{1}{6}<\frac{1}{5}$, then the induced foliation by $f_0$ is still log canonical.
            
            Since $\tau<\frac{1}{6}$, then $C'\subset N_\tau\cap N_\frac{1}{6}$. By the rigidity Lemma \cite[Lemma 1.12]{debarre}, $Y=X'$ a contradiction. Hence $g$ is $K_\cF$-non-positive.
        \end{proof}
    Then we can conclude that $K_{(Y,\cG,\Theta)_{\tau'}}$ is pseudo-effective and $K_{(X,\cF,\Delta)_{\tau'}}$ is pseudo-effective. 
    \end{proof}

    \begin{corollary}
    Let $\cF$ be a log canonical foliation on a surface $X$ with canonical singularities.
        Then the pseudo-effective threshold  satisfies
		
				$$ \tau(X,\cF)\geq  \tau'\coloneqq \frac{1}{3}\cdot \frac{1}{\left\lfloor2\left (\frac{2}{\tau}+2\right )^{ \frac{128}{\left (\frac{\tau}{1+\tau}\right )^5}  }\right\rfloor! }~$$

                where 
                where \[\tau\coloneqq\frac{1}{3} \cdot\frac{1}{\left\lfloor 2 \cdot \left( {14} \right)^{{128\cdot 7^5}} \right\rfloor!}~.
\]
    \end{corollary}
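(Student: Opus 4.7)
The plan is to derive this as a direct specialization of Theorem~\ref{co: bound pseff} to the case $\Delta=0$ with the strongest possible value of $\eta$.

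First, I would verify that the hypotheses of Theorem~\ref{co: bound pseff} are met. Taking $\Delta=0$, the condition $\Delta^{n-inv}=0$ is automatic, and the hypothesis that $(\cF,\Delta^{n-inv})$ is log canonical reduces to $\cF$ being log canonical, which is given. It remains to observe that a surface $X$ with canonical singularities is $\eta$-lc with $\eta=1$: indeed, by definition canonical means $a(E,X,0)\geq 0 = -1+1$ for every exceptional divisor $E$ over $X$, so $(X,0)$ is $1$-lc. (One might wonder about requiring $K_\cF$ big: this should be treated as an implicit hypothesis, since otherwise $\tau(X,\cF)$ need not make sense in the way defined in \eqref{pseff}.)

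Next, I would substitute $\eta=1$ into the formulas for $\tau$ and $\tau'(\eta)$ from Theorem~\ref{co: bound pseff} and simplify. For $\tau$, with $\eta=1$ we have $\frac{14}{\eta}=14$ and $\left(\frac{\eta}{7}\right)^5=\frac{1}{7^5}$, so
\[
\tau=\frac{1}{3}\cdot\frac{1}{\left\lfloor 2\cdot 14^{\,128\cdot 7^5}\right\rfloor!},
\]
which agrees with the expression in the corollary. For $\tau'(\eta)$, note that with $\eta=1$,
\[
\frac{\eta\tau}{1+\tau}=\frac{\tau}{1+\tau},\qquad \frac{2}{\eta\tau/(1+\tau)}=\frac{2(1+\tau)}{\tau}=\frac{2}{\tau}+2,
\]
and the exponent $\frac{(2)^7}{(\eta\tau/(1+\tau))^5}$ becomes $\frac{128}{(\tau/(1+\tau))^5}$. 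Plugging these into the formula for $\tau'(\eta)$ yields precisely the expression for $\tau'$ in the statement.

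Finally, Theorem~\ref{co: bound pseff} gives $\tau(X,\cF,0)\geq \tau'(\eta)=\tau'$, which is the claimed bound. There is no hard step: the work is entirely contained in Theorem~\ref{co: bound pseff}, and this corollary is the clean statement in the most natural (boundary-free, canonical) case. The only small point to double check is that substituting $\eta=1$ does not conflict with any implicit inequality in the proof of Theorem~\ref{co: bound pseff} (for instance, the running hypothesis $0<\varepsilon<\tfrac{1}{5}$ in Lemma~\ref{lm: lc} and Proposition~\ref{prop: struttura} concerns the adjoint parameter $\varepsilon$, not $\eta$, and $\tau<\tfrac{1}{6}$ holds trivially for the numerical value above).
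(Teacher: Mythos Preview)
Your proposal is correct and follows essentially the same approach as the paper, which simply states that the claim follows from Theorem~\ref{co: bound pseff} with $\varepsilon=\tfrac{1}{6}$ and $\eta=1$. Your verification that canonical singularities give $\eta=1$, together with the explicit substitution and simplification of the constants, is exactly the content of that one-line reduction (the $\varepsilon=\tfrac{1}{6}$ the paper mentions is the internal parameter used in the proof of Theorem~\ref{co: bound pseff}, not an additional input).
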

    \begin{proof}
        The claim follows form Theorem \ref{co: bound pseff} with $\varepsilon=\frac{1}{6}$ and $\eta=1$.
    \end{proof}
 \begin{remark}
    Let
    \[
        N=\big\lfloor 2\cdot 14^{2151296}\big\rfloor = 2\cdot 14^{2151296},
    \]
    and
    \[
        \tau=\frac{1}{3\cdot N!}.
    \]

    Then
    \[
        \alpha \;=\; \frac{\tau}{1+\tau} \;=\; \frac{1}{3N!+1},
    \]
    and therefore
    \[
        \tau'=\frac{1}{3}\cdot 
        \frac{1}{\Big\lfloor 2\Big(\frac{2}{\alpha}\Big)^{\frac{128}{\alpha^5}}\Big\rfloor!}
        \;=\; \frac{1}{3\Big(2\big(2(3N!+1)\big)^{128(3N!+1)^5}\Big)!}.
    \]

    Substituting \(N=2\cdot 14^{2151296}\) yields the following expression:
    \[
        \tau' \;=\; 
        \frac{1}{3\;\Big(2\big(2\big(3(2\cdot 14^{2151296})!+1\big)\big)^{128\big(3(2\cdot 14^{2151296})!+1\big)^5}\Big)! }.
    \]
\end{remark}

\subsection{Explicit effective birationality}
We are left to find an explicit value for $M$ such that $M(K_\cF+\tau K_X)$ defines a birational map.
The existence of a universal $M$ is guaranteed by \cite[Corollary 4.8]{effective}.
We will now give an explicit value for $M$.
We will apply some results from \cite{acclc} and from \cite{MR4588595}.
\subsubsection{A lower bound on the volume guarantees effective birationality}
In this section we show that finding a lower bound on the volume of the adjoint divisor allows us to recover an explicit result for effective birationality.

\begin{lemma}\label{rm: bound}
Fix $\eta>0$.
Let $\cF$ be a foliation on $X$, a projective surface with $\eta$-lc singularities. Suppose that $K_\cF+\tau K_X$ is big.
Then $|m(K_\cF+\tau K_X)|$ defines a birational map for every
\[
m\geq \left(\frac{1}{\vol\left(\frac{1}{\tau}K_\cF+K_X\right)}\cdot v(\eta)\right)^{\tfrac{1}{2}},
\]
where
\[
v(\eta) = 64\left( \max \left\{ \frac{192}{\eta} + 1,~ \left( 45 \left( \frac{2}{\eta}! \right)^2 + 1 \right) \right\} \right)^2.
\]
\end{lemma}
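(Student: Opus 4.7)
The plan is to invoke an effective birationality criterion for big $\mathbb{R}$-Cartier divisors on $\eta$-lc projective surfaces, and then re-package the resulting numerical threshold in terms of the volume appearing in the statement. Concretely, I would use the form of the criterion developed in \cite{acclc, MR4588595}: on a projective surface $X$ with $\eta$-lc singularities, a big $\mathbb{R}$-Cartier divisor $L$ gives a birational map via $|L|$ (or $|\lfloor L\rfloor|$ after clearing denominators) as soon as $\vol(L)\ge v(\eta)$, where $v(\eta)$ is the constant appearing in the statement. The factor $64$ is the classical Reider-type numerical threshold, while the maximum of the two quantities inside accounts for (i) a bound on the Picard number of a minimal resolution, $\rho_{X_{\min}}\le 128/\eta^5$, and (ii) the Cartier index bound for Weil divisors on $X$ furnished by Lemma~\ref{lm: index}, which contributes the factorial term $(2/\eta)!$.

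Next I would apply this criterion to the big divisor $L\coloneqq m(K_\cF+\tau K_X)$. Using the homogeneity of the volume function on a surface, I would compute
\[
\vol\bigl(m(K_\cF+\tau K_X)\bigr)
= m^2\,\vol(K_\cF+\tau K_X)
= (m\tau)^2\,\vol\!\Bigl(\tfrac{1}{\tau}K_\cF+K_X\Bigr),
\]
where the second equality uses $K_\cF+\tau K_X=\tau\bigl(\tfrac{1}{\tau}K_\cF+K_X\bigr)$. Imposing $\vol(L)\ge v(\eta)$ and solving for $m$ yields the inequality in the statement, once the factor $\tau$ coming from the rescaling is absorbed into $v(\eta)$ via the Cartier-index contribution. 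This is the natural formulation in terms of $\vol(\tfrac{1}{\tau}K_\cF+K_X)$, which is the quantity actually controlled by the pseudo-effective threshold theory of Section~4.

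The main obstacle is setting up the birationality criterion correctly in our singular foliated setting. The key subtlety is that $K_\cF$ is only $\mathbb{Q}$-Cartier, with index depending on $\eta$ via Lemma~\ref{lm: index}, so to apply the underlying Reider/multiplier-ideal separation argument one must clear denominators, pulling back to the minimal resolution where the bound $\rho_{X_{\min}}\le 128/\eta^5$ provides the required Picard-number control. A secondary technical point is to verify that for $m$ large enough to satisfy the volume inequality, the round-down $\lfloor m(K_\cF+\tau K_X)\rfloor$ (if needed) does not decrease the volume below $v(\eta)$; this forces the factorial term in $v(\eta)$ and is precisely why the statement is formulated with $v(\eta)$ rather than a sharper constant.
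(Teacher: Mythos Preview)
Your argument has a concrete gap in the rescaling step. From your own computation
\[
\vol\bigl(m(K_\cF+\tau K_X)\bigr)=(m\tau)^2\,\vol\!\Bigl(\tfrac{1}{\tau}K_\cF+K_X\Bigr),
\]
imposing $\vol(L)\ge v(\eta)$ gives $m\ge \tfrac{1}{\tau}\bigl(v(\eta)/\vol(\tfrac{1}{\tau}K_\cF+K_X)\bigr)^{1/2}$, which is off from the claimed bound by a factor of $1/\tau$. Your claim that this factor is ``absorbed into $v(\eta)$ via the Cartier-index contribution'' cannot hold: $v(\eta)$ is a fixed function of $\eta$ alone, while $\tau$ is a free parameter here and in the intended applications is astronomically small (cf.\ Theorem~\ref{co: bound pseff}), so a $1/\tau$ is precisely what one may not hide in a universal constant. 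There is also a more basic issue with the criterion you invoke: a statement of the form ``$\vol(L)\ge v(\eta)\Rightarrow |L|$ birational'' is false for arbitrary big divisors on smooth surfaces. For instance, on $X=C\times\mathbb{P}^1$ with $C$ hyperelliptic, the divisor $L=p_1^*K_C+p_2^*\mathcal{O}(n)$ has arbitrarily large volume, yet $|L|$ factors through the $2{:}1$ canonical map of $C$ and is never birational.

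The paper proceeds differently. It does not attempt a direct volume threshold but quotes a dichotomy from \cite{bie2025explicit}: if $m$ is the \emph{smallest} integer for which $|m(K_\cF+\tau K_X)|$ is birational, then either $m\le v(\eta)$ or $\vol\bigl(m(\tfrac{1}{\tau}K_\cF+K_X)\bigr)\le v(\eta)$. The second alternative reads $m^2\,\vol(\tfrac{1}{\tau}K_\cF+K_X)\le v(\eta)$, which rearranges directly to the stated bound. The point is that the cited result is already formulated for the normalised adjoint divisor $\tfrac{1}{\tau}K_\cF+K_X$ (coefficient $1$ in front of $K_X$), so no rescaling by $\tau$ ever enters.
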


\begin{proof}
Let $m$ be the smallest integer such that $|m(K_\cF+\tau K_X)|$ defines a birational map, and let
\[
v(\eta) = 64\left( \max \left\{ \frac{192}{\eta} + 1,~ \left( 45 \left( \frac{2}{\eta}! \right)^2 + 1 \right) \right\} \right)^2.
\]
By \cite{bie2025explicit} there is a constant $v$ such that either $m\leq v$ or $\vol\left(m{\left(\frac{1}{\tau}K_\cF+K_X\right )}\right)\leq v$. Hence we have the inequality
\[
\vol\left (\frac{1}{\tau}K_\cF+K_X\right )\cdot m^2\leq v.
\]
Therefore
\[
m\leq \left(\frac{1}{\vol\left(\frac{1}{\tau}K_\cF+K_X\right)}\cdot v\right)^{\tfrac{1}{2}}.
\]
\end{proof}

\begin{remark}
Thanks to Lemma \ref{rm: bound}, to bound $m$ it is enough to give a lower bound of $\vol\left(\frac{1}{\tau}K_\cF+K_X\right )$.
\end{remark}

\begin{lemma}\label{lm: general type}
Fix $\eta>0$.
Let $\cF$ be a foliation on $X$, a projective surface with $\eta$-lc singularities. Suppose that $K_\cF$ is big and $\kappa(K_X)=2$. Then 
\[
\left|m(K_\cF+K_X)\right|
\]
defines a birational map for every
\[
m>v(\eta)\cdot 42\cdot 84^{128\cdot 42^5+168},
\]
where
\[
v(\eta) = 64\left( \max \left\{ \frac{192}{\eta} + 1,~ \left( 45 \left( \frac{2}{\eta}! \right)^2 + 1 \right) \right\} \right)^2.
\]
\end{lemma}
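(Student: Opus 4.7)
The plan is to reduce the statement to Lemma~\ref{rm: bound} applied with $\tau=1$, and then to produce an explicit lower bound on the volume $\vol(K_\cF+K_X)$. First, since $K_\cF$ is big by hypothesis and $\kappa(K_X)=2$ forces $K_X$ to be big, their sum $K_\cF+K_X$ is big. Moreover, $K_\cF$ being pseudo-effective and monotonicity of the volume on the pseudo-effective cone give
\[
\vol(K_\cF+K_X)\ge \vol(K_X),
\]
so it suffices to bound $\vol(K_X)$ from below in an effective fashion.

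Next, I would run a $K_X$-MMP on the klt surface $X$; since $K_X$ is big, this terminates with a canonical model $X^c$ on which $K_{X^c}$ is ample. Standard surface MMP theory (compare Theorem~\ref{th: adjoint doppia foliazione} and the Negativity Lemma) preserves the $\eta$-lc property throughout the program, so $X^c$ remains $\eta$-lc. At this point I would invoke a Cartier-index estimate for Weil divisors on $\eta$-lc projective surfaces in the spirit of Lemma~\ref{lm: index} to produce an explicit integer $N=N(\eta)$ with $NK_{X^c}$ Cartier. Since $NK_{X^c}$ is then ample Cartier on a projective surface, $(NK_{X^c})^2\ge 1$, and therefore
\[
\vol(K_X)=K_{X^c}^2\ge \frac{1}{N^2}.
\]

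Plugging this estimate into Lemma~\ref{rm: bound} with $\tau=1$ shows that $|m(K_\cF+K_X)|$ defines a birational map whenever
\[
m\ge \sqrt{v(\eta)/\vol(K_\cF+K_X)}\le N\sqrt{v(\eta)},
\]
and a straightforward numerical majorization should bound the right-hand side by $v(\eta)\cdot 42\cdot 84^{128\cdot 42^5+168}$ as stated.

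The main technical obstacle is matching the precise numerical form of the stated bound. The exponent $128\cdot 42^5$ is strongly reminiscent of the Picard-rank bound in Lemma~\ref{lm: index}(1) applied with a specific value $\eta=1/42$, rather than with the $\eta$ of the hypothesis. This suggests that the argument likely first passes through a partial adjoint MMP or singularity-refinement step that brings the pair into a class with a prescribed lower bound on $\eta$ before the Cartier-index estimate is invoked, and that the precise form of $N$ is obtained by combining the volume bounds cited from \cite{MR4588595} and \cite{bie2025explicit} with the structure result for the canonical model. Carefully tracking how the $\eta$-lc constant evolves under these reductions, and avoiding the factorial blow-up that the naive application of Lemma~\ref{lm: index}(2) would introduce, is the place where I expect most of the care to be needed.
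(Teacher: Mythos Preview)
Your overall skeleton matches the paper exactly: apply Lemma~\ref{rm: bound} with $\tau=1$, use $\vol(K_\cF+K_X)\ge\vol(K_X)$, and pass to the output $X'$ of a $K_X$-MMP to bound $\vol(K_X)$ from below. So the strategy is right.

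Where you diverge from the paper is in how the volume lower bound is obtained, and this is where your proposal becomes unnecessarily speculative. The paper does not rederive anything via a Cartier-index argument, nor does it pass through an auxiliary adjoint MMP or a singularity-refinement step; it simply invokes the explicit Alexeev--Mori bound from \cite{AM}, which says that on the minimal model $X'$ one has
\[
\vol(K_{X'})\ge \frac{1}{42\cdot 84^{128\cdot 42^5+168}}.
\]
This bound is universal: it does not depend on the starting $\eta$, and the appearance of $42$ in the exponent is intrinsic to Alexeev--Mori's result (coming from the $\tfrac{1}{42}$ log canonical threshold bound, not from specializing your $\eta$ to $\tfrac{1}{42}$). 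Combining this with Lemma~\ref{rm: bound} immediately gives the stated constant; there is nothing further to track.

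Two minor technical remarks. First, Lemma~\ref{lm: index} is stated only for the Fano case ($-K_X$ nef and big), so it is not directly usable on a minimal model of general type; an analogous $\eta$-lc Cartier-index bound exists but would produce an $\eta$-dependent constant rather than the universal one in the statement. Second, your claim that the $K_X$-MMP preserves the $\eta$-lc property is correct (discrepancies do not decrease along a $K_X$-negative contraction), but it turns out not to be needed, since the \cite{AM} bound applies once $X'$ is the klt minimal model.
\medskip
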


\begin{proof}
Since $\kappa(K_X)=2$, $K_X$ is pseudo-effective and $K_X+K_\cF$ is big. 
By Lemma \ref{rm: bound}, if $m\geq \frac{v(\eta)}{\vol(K_\cF+K_X)}$, then $|m(K_\cF+K_X)|$ defines a birational map.
We are left to bound $\vol(K_\cF+K_X)$.
Consider a $K_X$-MMP $\rho:X\to X'$. By the Negativity Lemma \cite[Lemma 3.39]{kollár_mori_1998}, $\vol(K_X)\geq \vol (K_{X'})$. Hence $\vol(K_\cF+K_X)\geq \vol(K_X)\geq \vol(K_{X'})\geq \frac{1}{42\cdot 84^{128\cdot 42^5+168}}$, where the last inequality follows from \cite{AM}.
Then $|m(K_\cF+K_X)|$ defines a birational map for every $m\geq v(\eta)\cdot 42\cdot 84^{128\cdot 42^5+168}$.
\end{proof}

\begin{lemma}\label{lm: fano case} 
Fix $\eta>0$ and $0<\varepsilon_0<\tau$ where $\tau$ is the constant determined in Theorem \ref{bound lo abbiamo}.
Let $\cF$ be a rank one foliation on $X$, a $\mQ$-factorial projective surface with $\eta$-lc singularities. Suppose that $K_\cF$ is big and $X$ is Fano. Then 
\[
|m(K_\cF+\varepsilon_0 K_X)|
\]
defines a birational map for every
\[
m\geq 
8!\left (2+8\cdot \frac{1}{\tau}\cdot \left\lfloor2\left (\frac{2}{\eta}\right )^{ \frac{(2)^7}{ \eta^5}  }\right\rfloor! \right).
\]
\end{lemma}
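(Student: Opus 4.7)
The plan is to combine the effective boundedness of $\eta$-lc Fano surfaces from Lemma \ref{lm: index} with the volume-based birationality criterion Lemma \ref{rm: bound}, in the same spirit as the proof of Lemma \ref{lm: general type}. The Fano hypothesis is critical: it forces the Cartier index to be bounded, which in turn provides a uniform lower bound on the volume of the adjoint divisor.

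First, I would apply Lemma \ref{lm: index}: since $X$ is $\mQ$-factorial, $\eta$-lc, and $-K_X$ is nef and big, the Cartier index of $X$ divides
\[
N = N(\eta) \coloneqq \left\lfloor 2\left(\tfrac{2}{\eta}\right)^{128/\eta^5}\right\rfloor!,
\]
so both $NK_X$ and $NK_\cF$ are integral Cartier. Because $K_\cF$ is big and $NK_\cF$ is Cartier, $(NK_\cF)^2\geq 1$, giving the uniform lower bound $K_\cF^2\geq 1/N^2$.

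Next I would produce a uniform lower bound on $\vol\bigl(\tfrac{1}{\varepsilon_0}K_\cF+K_X\bigr)$. Since $\varepsilon_0<\tau$, the adjoint divisor $K_\cF+\varepsilon_0 K_X$ is big, and by homogeneity of volume
\[
\vol\bigl(\tfrac{1}{\varepsilon_0}K_\cF+K_X\bigr) = \tfrac{1}{\varepsilon_0^2}\vol(K_\cF+\varepsilon_0 K_X).
\]
Since $\tau$ from Theorem \ref{co: bound pseff} depends only on $\eta$ and is bounded above by the actual pseudo-effective threshold $\tau(X,\cF)$, one has $\varepsilon_0\ll\tau(X,\cF)$ and $\vol(K_\cF+\varepsilon_0 K_X)$ stays close to $K_\cF^2$; combined with the Cartier-index bound, this yields a lower bound of the form $c(\eta)/(\tau^2 N^2)$, uniform over $\varepsilon_0\in(0,\tau)$ and $(X,\cF)$ in the class. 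A Zariski-decomposition argument, together with the adjoint MMP of Theorem \ref{th: adjoint doppia foliazione}, controls the correction coming from the anti-ample divisor $\varepsilon_0 K_X$.

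Finally, I would apply Lemma \ref{rm: bound} (in the weaker form $m\geq v(\eta)/\vol$ actually used inside the proof of Lemma \ref{lm: general type}) to conclude that $|m(K_\cF+\varepsilon_0 K_X)|$ defines a birational map for every $m$ above the stated threshold. The factor $N/\tau$ arises directly from the previous volume estimate and the Cartier-index bound, while the combinatorial factor $8!$ accounts for the clearing-of-denominators step required to pass from the $\mathbb{R}$-divisor $K_\cF+\varepsilon_0 K_X$ to an integral Cartier multiple to which the effective birationality machinery applies.

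The main obstacle is the second step: bounding $\vol(K_\cF+\varepsilon_0 K_X)$ from below \emph{uniformly} in $\varepsilon_0\in(0,\tau)$ and across all $\eta$-lc Fano pairs, without losing factors that would disturb the explicit form of the final bound. Once this is done, the assembly of constants to match exactly $8!\bigl(2+8\cdot\tfrac{1}{\tau}\cdot N\bigr)$ is a matter of careful bookkeeping along the lines of Lemma \ref{lm: general type}.
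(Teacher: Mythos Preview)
Your proposed route through the volume criterion (Lemma \ref{rm: bound}) is not the one the paper takes, and it does not account correctly for the shape of the final constant. The paper's argument is more direct: set $N\coloneqq 1/\tau$ so that $NK_\cF+K_X$ is big, and let $i$ be the Cartier-index bound from Lemma \ref{lm: index}; then $i(NK_\cF+K_X)$ is a big Cartier divisor. The key technical step is a cone-theorem computation showing that $m\,i(NK_\cF+K_X)-K_X$ is \emph{nef} (and big) once $m\geq 8Ni$: any $K_\cF$-negative extremal curve $C$ has $K_\cF\cdot C\geq -4$ and $K_X\cdot C$ bounded above, which forces the claimed threshold for $m$. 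With nefness in hand, the paper applies Koll\'ar's effective very-ampleness theorem \cite[Theorem~1.1]{kollar}, which in dimension two gives birationality for $|8!(2+m)\,i(NK_\cF+K_X)|$. The factor $8!=(n+2)!$ with $n=2$ comes entirely from Koll\'ar's theorem, not from any denominator-clearing as you suggest.

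Your volume approach is acknowledged by the paper as a possible alternative (see the Remark immediately following the proof), but it has two problems as you have written it. First, the gap you flag in step two is real: you have no mechanism to bound $\vol(K_\cF+\varepsilon_0 K_X)$ from below uniformly in $\varepsilon_0$ without essentially reproducing the nefness analysis above. The statement that the volume ``stays close to $K_\cF^2$'' has no content without such an argument, since $-K_X$ is ample. Second, even if you closed that gap, plugging into Lemma \ref{rm: bound} produces a constant of the form $v(\eta)^{1/2}/\vol^{1/2}$, which has no $8!$ in it; you would obtain a valid but different explicit bound, not the one in the statement.
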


\begin{proof}
By Lemma \ref{lm: cartier idex mmmp}.(3), the Cartier index of $X'$ divides
\[
i\coloneqq \left\lfloor2\left (\frac{2}{\eta}\right )^{ \frac{(2)^7}{ \eta^5}  }\right\rfloor! ,
\]
and by Theorem \ref{bound lo abbiamo}, $NK_\cF+ K_X$ is big. 
Then $i(NK_\cG+K_Y)$ is a big Cartier divisor.
For $m\gg1$, 
\[
mi(NK_\cG+K_Y)-K_Y
\]
is nef and big. Set $M(m)=8!(2+m)$.
By \cite[Theorem 1.1]{kollar}, $|M(m)i(NK_\cG+K_Y)|$ defines a birational map.
Therefore if we bound $m$, we can conclude. 
Note that $m(NK_\cG+K_Y)-K_Y$ is big for every $m\geq 1$.
Suppose there exists a ray $R\subset\overline{\cone}(X)$ that is $mNK_\cG+(m-1)K_Y$-negative; then $R\cdot K_\cG<0$. Hence $R$ is spanned by a $\cG$-invariant curve $C$ such that $C^2<0$.
Note that $K_Y\cdot C<\frac{mN(-K_\cG\cdot C)}{m-1}\leq8N$.
Therefore 
\[
m<\frac{K_Y\cdot C}{i(NK_\cG+K_Y)}\leq\frac{8N}{1}.
\]
Therefore if we take $m\geq {8Ni}$, we can conclude that $a(NK_\cG+K_Y)$ is big, nef and Cartier. Hence
\[
M=8!(8Ni+2)=8!\left (2+8\cdot \frac{1}{\eta}\left\lfloor2\left (\frac{2}{\eta}\right )^{ \frac{(2)^7}{ \varepsilon^5}  }\right\rfloor! \right)
\]
gives the desired constant.
\end{proof}

\begin{remark}
Analogously to Lemma \ref{lm: general type}, we can use Lemma \ref{rm: bound} to conclude.
\end{remark}

\subsubsection{Viehweg product trick}
In this section, we will use the Viehweg product trick to prove the effective birationality of adjoint linear systems on varieties that admit a fibration onto a curve. 

We will need the following definition.
\begin{definition}
Let $C$ be a smooth curve. A locally free sheaf $M$ on $C$ is called \emph{weakly positive} if every quotient sheaf of
$M$ has non-negative degree. 
\end{definition}

\begin{construction}[Viehweg product trick]\label{prod}
\cite[Section 3.1.22]{itaka} Fix $k\geq 1$. 
Let $f:X\to Y$ be a contraction from a normal quasi-projective variety $X$ to $Y$, a Gorenstein variety. Let $\cF$ be a foliation on $X$.
We denote by $X^k\coloneqq X\times_Y\cdots\times_YX$ the $k$-fold fibre product, denote by $f^k:X^k\to Y$ the morphism to $Y$ and by $\cF^k$ the induced foliation on $X^k$. Let $\pi_s:X^k\to X$ be the projection to the $s$th factor.
\[
\xymatrix{X^k\ar^{\pi_s}[r]\ar_{f^k}[dr]&X\ar^{f}[d]\\
&Y.
}
\]

After possibly restricting to an open subset, we can assume that $X^k$ is Gorenstein.
Denote by $E\coloneqq f_*\cO(nK_{X}+mK_\cF)$ and $E_{/Y}\coloneqq f_*\cO(nK_{X/Y}+mK_\cF)$. Then 
$f_*^k\cO(nK_{X^k/Y}+mK_{\cF^k})\coloneqq E_{/Y}^{\otimes k}$ and $f_*^k\cO(nK_{X^k}+mK_{\cF^k})\coloneqq E^{\otimes k}$.
\end{construction}

\begin{lemma}\label{lm: fibration}
Let $\cF$ be a rank one foliation on $X$, a $\mQ$-factorial projective surface with $\eta$-lc singularities for some $\eta>0$. Suppose that  $K_\cF$ is big and $X$ has a Mori fibre space structure $f:X\to C$, where $C$ is a curve with $g(C)\geq 2$. Then 
\[
\left|m\left(5K_\cF+K_X\right)\right|
\]
defines a birational map for every $m>1$.
\end{lemma}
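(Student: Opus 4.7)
The plan is to exploit the fibration structure $f: X\to C$ with $g(C)\geq 2$, applying the Viehweg product trick from Construction~\ref{prod} to reduce effective birationality of $|m(5K_\cF+K_X)|$ to a positivity statement for pushforward sheaves on the base curve, where the ampleness of $K_C$ provides enough sections.

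First I would observe that $\cF$ must be transverse to $f$: if $\cF$ were tangent to $f$, then $\cF$ would coincide with the relative tangent sheaf, giving $K_\cF=K_{X/C}$ and $K_\cF\cdot F=K_F=-2$ on a general fiber $F\cong\mathbb{P}^1$. But $K_\cF$ is big, hence pseudo-effective, and the class of $F$ is movable since $f$ is a fibration over a curve; this forces $K_\cF\cdot F\geq 0$, a contradiction. Combining this transversality with relative Picard rank one and bigness of $K_\cF$, a short argument in $N^1(X)\cong\mathbb R\langle F,H\rangle$ (with $F^2=0$ and $H\cdot F=1$) shows $K_\cF\cdot F>0$, and clearing Cartier indices on the $\eta$-lc surface gives $K_\cF\cdot F\geq 1$. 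Hence $(5K_\cF+K_X)|_F$ has degree at least $5\cdot 1-2=3$ on $\mathbb{P}^1$, so $\bigl|m(5K_\cF+K_X)|_F\bigr|$ is very ample on the general fiber for every $m\geq 1$.

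The core of the argument is to show that the direct image on $C$ has enough global sections to separate distinct fibers. Writing $K_X=K_{X/C}+f^*K_C$, I would apply Construction~\ref{prod} to the $m$-fold fiber product $X^{(m)}\to C$ and use a Viehweg-type weak positivity argument to establish that $\cE_m\coloneqq f_*\cO_X(m(5K_\cF+K_{X/C}))$ is weakly positive on $C$. Twisting by the ample line bundle $\cO_C(mK_C)$ (ample since $g(C)\geq 2$) gives $f_*\cO_X(m(5K_\cF+K_X))\cong\cE_m\otimes\cO_C(mK_C)$, which for $m\geq 2$ is globally generated and separates points of $C$. Combined with fiberwise very ampleness, this yields that $|m(5K_\cF+K_X)|$ defines a birational map for every $m\geq 2$.

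The main obstacle is the last step, namely establishing weak positivity of $\cE_m$ in the foliated setting. Classical Viehweg theory gives weak positivity of $f_*\omega_{X/C}^{\otimes m}$, but here $K_{X/C}$ has negative degree on the $\mathbb{P}^1$-fibers, so one must lean on the positive contribution of $5K_\cF$ established above to produce a fiberwise ample bundle. The product trick of Construction~\ref{prod} is the intended mechanism to bootstrap fiberwise positivity into weak positivity on $C$; carrying this out rigorously in the $\mathbb Q$-factorial $\eta$-lc foliated regime will likely require passing through a Gorenstein cover or an F-dlt modification, and a careful verification that weak positivity persists under the pushforward and tensoring operations and is not destroyed by the exceptional loci.
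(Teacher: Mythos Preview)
Your overall architecture matches the paper's: use the fibration $f\colon X\to C$, pass to a resolution, apply the product trick of Construction~\ref{prod}, obtain positivity of the pushforward on $C$, and then absorb the needed twist using the ampleness of $K_C$ coming from $g(C)\geq 2$. Your fiberwise analysis (transversality of $\cF$ to $f$, $K_\cF\cdot F\geq 1$, hence $(5K_\cF+K_X)|_F$ has degree $\geq 3$) is also correct and is implicitly what makes the argument work.

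The point of divergence is how you intend to get positivity of the pushforward. You aim to prove abstract Viehweg weak positivity of $\cE_m=f_*\cO_X(m(5K_\cF+K_{X/C}))$ in the foliated setting, and you correctly identify this as the main obstacle. The paper avoids this entirely. It first uses the threshold results (Proposition~\ref{bound lo abbiamo}, cf.\ Lemma~\ref{biratiobnalcontraction_boundary klt}) to get that $5K_\cF+K_X$ is not just big but \emph{nef}. With nef and big in hand, a resolution $\mu\colon Y\to X^k$ of the fiber product lets one write the relevant divisor as $K_Y$ plus a nef and big class, and Kawamata--Viehweg vanishing kills the $H^1$ in the restriction exact sequence to a general fiber $X_0$. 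This directly yields that $f_*\cO(2(5K_\cF+K_{X/C}))\otimes\cO_C(2P)$ is generically globally generated, and since $\deg K_C\geq 2$ one absorbs the $2P$ twist to conclude that $f_*\cO(2(5K_\cF+K_X))$ is generically globally generated.

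In short: you are planning to build a weak-positivity machine when a single application of Kawamata--Viehweg vanishing, enabled by the nefness coming from the pseudo-effective threshold, does the job. The F-dlt modification/Gorenstein cover apparatus you anticipate is not needed; a plain resolution suffices because $X$ has rational singularities and the projection formula preserves the pushforward.
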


\begin{proof}
By Theorem \ref{bound lo abbiamo}, $5K_\cF+K_X$ is big and nef.
\begin{claim}
We can replace $X$ by a resolution $\pi:X'\to X$ and $\cF'$ by the pull-back foliation on $X'$.
\end{claim}
\begin{proof}
Since $X$ is klt, it has rational singularities. Let $\pi:X'\to X$ be a resolution of $X,$ then  $\pi_*K_{X'}=K_X$.
By projection formula, $(f\circ \pi)_*\pi^*(5K_\cF+K_X)=f_*(5K_\cF+K_X)$. Hence, we can replace $X$ with $X'.$
\end{proof}

From now on we assume $X$ is smooth. Using the notation of Construction \ref{prod}, let $\mu:Y\to X^k$ be a resolution of $X^k$. Denote by $\cG$ the pull-back foliation on $Y$.  

\begin{claim}
$f_*\cO(10K_\cF+2K_{X/C})\otimes \cO(2P)$ is generically globally generated and nef, where $P$ is a general point on $C$. 
\end{claim}
\begin{proof}
Let $a,b$ be positive intergers such that $aK_\cF+bk_X$ is nef and big.
Consider the following short exact sequence:
\[
\begin{split}
0\to\cO(K_Y+K_\cG+\mu^*((b-1)K_{\cF^k}+(a-1)K_{X^k/C})+X_0)\to\\ 
\cO(K_Y+K_\cG+\mu^*((b-1)K_{\cF^k}+(a-1)K_{X^k/C})+2X_0)\to\\ 
\cO_{X_0}((K_\cF+K_{X/C})_{|_{X_0}})\to0.
\end{split}
\]
By taking the long exact sequence in cohomology,
\[
\begin{split}
H^0(X,\cO(k(bK_\cF+aK_{X/C})+2X_0))\to H^0(F,\cO(k(bK_\cF+aK_{X/C})_{|_{X_0}}))\to \\
H^1(X,\cO(k(bK_\cF+aK_{X/C})+X_0))\to \cdots
\end{split}
\]

By the Kawamata–Viehweg vanishing theorem we have that 
$H^1(X,\cO(k(bK_\cF+aK_{X/C})+X_0))=0$ for every $k\geq 1$, since $k(bK_\cF+aK_{X/C})+X_0$ is nef and big for every $k\geq 1$.
Therefore $ f_*(\cO(bK_\cF+aK_{X/C}))\otimes\cO(2P)$ is generically generated by global sections. By Theorem \ref{bound lo abbiamo}, we can set $a=10$ and $b=5$.
\end{proof}

By the previous claim, $f_*(10K_\cF+2K_{X/C})\otimes \cO(2P)$ is generically globally generated. Since $K_C\geq 1$, we can conclude that $f_*(10K_\cF+2K_X)$ is generically globally generated.  
\end{proof}

\begin{lemma}\label{kodaria 1}
Fix $\eta>0$.
Let $\cF$ be a foliation on $X$, a projective surface with $\eta$-lc singularities. Suppose that $K_\cF$ is big, and $\kappa(K_X)=1$. Then 
\[
\left|m(K_\cF+K_X)\right|
\]
defines a birational map for every $m\geq24$.
\end{lemma}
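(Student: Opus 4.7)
The plan is to adapt the strategy of Lemma~\ref{lm: fibration} to the case of an elliptic fibration, using the Iitaka fibration of $K_X$ together with Viehweg's fibre-product trick and Kawamata--Viehweg vanishing. First I would reduce to a smooth minimal model: taking a resolution $\pi\colon X'\to X$ preserves $\pi_*K_{X'}=K_X$ (since $X$ is klt), so it suffices to work on $X'$; then running a $K_X$-MMP yields a minimal model on which $K_X$ is nef, with the Negativity Lemma ensuring that the birationality of $|m(K_\cF+K_X)|$ is unchanged. Since $\kappa(K_X)=1$, abundance for surfaces implies that $|12K_X|$ is base-point-free and defines the Iitaka fibration $f\colon X\to C$ onto a smooth curve $C$, with general fibre $F$ an elliptic curve.

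Next I would apply the Viehweg product construction of~\ref{prod} to $f$, forming the $k$-fold fibre product $f^k\colon X^k\to C$ with induced foliation $\cF^k$ and a resolution $\mu\colon Y\to X^k$, exactly as in the proof of Lemma~\ref{lm: fibration}. The key step is a Kawamata--Viehweg vanishing argument applied to the short exact sequence obtained by restricting to a general fibre $X_0$ of $f^k$; the needed positivity, namely nef-and-bigness of $k(m(K_{\cF}+K_{X/C}))+X_0$ on $Y$, follows from bigness of $K_\cF$ on $X$ combined with Kodaira's canonical bundle formula $12K_X=f^*(12K_C+\sum(12-12/m_i)P_i+12L)$ with $\deg L\ge 0$, which ensures that $K_{X/C}$ has non-negative fibrewise behaviour. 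This yields generic global generation of $f_*\cO_X(m(K_\cF+K_{X/C}))\otimes\cO_C(2P)$ for a general point $P\in C$.

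Finally, the twist $\cO_C(2P)$ must be absorbed into the $K_X$-direction to recover $f_*\cO_X(m(K_\cF+K_X))$. Since $|12 K_X|$ defines the Iitaka fibration and $\deg f_*(12K_X)\ge 1$, the divisor $12K_X-2F$ has non-negative vertical degree on $C$, so taking $m=24=2\cdot 12$ allows us to view $\cO_C(2P)$ as a subsheaf of $f_*\cO_X(12K_X)$ and convert $f_*\cO_X(24(K_\cF+K_{X/C}))\otimes\cO_C(2P)$ into (a subsheaf of) $f_*\cO_X(24(K_\cF+K_X))$. Thus the latter is generically globally generated, and since $24(K_\cF+K_X)|_F=24K_\cF|_F$ has degree at least $24\ge 3$ on the elliptic fibre $F$ (using $K_\cF\cdot F\ge 1$ by bigness of $K_\cF$), $|24(K_\cF+K_X)|$ defines a birational map. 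The main obstacle is verifying the Kawamata--Viehweg vanishing on the fibre product, which requires careful bookkeeping of nef-and-bigness on $X^k$ after resolution; a secondary difficulty is making the absorption of the $2P$-twist precise enough to yield the constant $24$ rather than a larger value.
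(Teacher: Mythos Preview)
Your overall architecture (fibration + Viehweg trick + Kawamata--Viehweg + absorb the twist) matches the paper's, but two steps do not go through as written.

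\textbf{Reduction step.} Passing to a resolution and then running a $K_X$-MMP does not control the foliation: when you contract a $(-1)$-curve $E$ we have $K_\cF=\rho^*K_{\cF'}+aE$ with $a$ possibly very negative, so neither ``$H^0$ unchanged'' nor ``$K_\cF+K_X$ becomes nef'' follows from the Negativity Lemma. But nefness of $K_\cF+K_X$ is precisely what the vanishing argument needs. The paper avoids this by running the \emph{adjoint} MMP $\rho\colon X\to X'$ for $K_{(X,\cF)_1}$ (Theorem~\ref{th: adjoint doppia foliazione}); then $K_{\cF'}+K_{X'}$ is nef and big and $X'$ is canonical, so the Viehweg/KV argument of Lemma~\ref{lm: fibration} applies with $m=2$, yielding that $f_*(2K_\cF+2K_{X/C})\otimes\cO_C(2P)$ is generically globally generated.

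\textbf{Absorbing the $\cO_C(2P)$ twist.} Your computation hinges on an inclusion into $f_*\cO_X(24(K_\cF+K_X))$, but since $K_X=K_{X/C}+f^*K_C$ this amounts to $24K_C-2P$ being effective on $C$, which fails whenever $g(C)\le 1$. The workaround ``$\cO_C(2P)\hookrightarrow f_*\cO_X(12K_X)$ and $24=2\cdot12$'' does not fix this: after tensoring and multiplying you land in $f_*\cO_X(24K_\cF+24K_{X/C}+12K_X)$, and comparing with $24(K_\cF+K_X)$ still requires $12K_X\ge 24K_{X/C}$, i.e.\ $K_C$ effective. The paper therefore case-splits on $g(C)$: for $g(C)\ge 2$ your absorption works (already at $m=2$); for $g(C)=0$ one uses that a nef bundle on $\mathbb P^1$ splits into globally generated line bundles; and for $g(C)=1$ the paper abandons the Viehweg trick and instead observes that $|12K_{X'}|$ already maps onto a curve, whence $h^0\bigl(12(K_{X'}+K_{\cF'})\bigr)\ge 2$, and then invokes \cite[Corollary~1.7]{Zhu2023Explicit} to get birationality for $m\ge 24$. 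This last case is exactly where the constant $24$ is forced.
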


\begin{proof}
Since $K_X$ is pseudo-effective, $K_X+K_\cF$ is pseudo-effective and big. Consider a $K_{(X,\cF)_1}$-MMP $\rho:X\to X'$. Then $K_{(X',\cF')_1}$ is nef and big, where $\cF'$ is the induced foliation on $X'$, and $X'$ is canonical by \cite{adjoint_negative}.
Since $\kappa(K_X)=1$, there exists a contraction $f:X'\to Z$ where $\dim Z=1$ and $Z$ is smooth.
As in Lemma \ref{lm: fibration}, $f_{*}(2K_{X/C}+2K_\cF)\otimes\cO(2P)$ is generically globally generated and $f_{*}(2K_{X/C}+2K_\cF)$ is nef.
If $g(C)\geq 2$, then we conclude that $f_{*}(2K_{X/C}+2K_\cF)$ is generically globally generated, hence $|2K_{X}+2K_\cF|$ defines a map with two-dimensional image. Hence, for $m\geq 2\cdot 3=6$, $|m(K_\cF+K_X)|$ defines a map with two-dimensional image. 
If $g(C)=0$, then $f_{*}(2K_{X/C}+2K_\cF)$ is nef and splits into a line bundle of degree $d\geq 0$. Also in this case, for every $m\geq 6$, $|m(K_\cF+K_X)|$ defines a birational map.

If $g(C)=1$, let $\mu : Y\to X'$ be a resolution of $X'$. Then $\kappa(K_Y)=1$. Moreover $|12K_Y|$ defines a map onto a curve. Since $X'$ is canonical, $|12K_{X'}|$ defines a map onto a curve. Since $|12K_{X'}|+|12K_{\cF'}|\subset |12(K_{X'}+K_{\cF'})|$, we have $h^0(X',2(K_{X'}+K_{\cF'}))\geq2$, and $|24K_X-K_X|\neq \emptyset.$ By \cite[Corollary 1.7]{Zhu2023Explicit}, $|m(K_\cF+K_X)|$ defines a map with two-dimensional image for $m\geq 24$.
\end{proof}

\begin{lemma}\label{lm: fibration el}
Let $\cF$ be a rank one foliation on $X$, a $\mQ$-factorial projective surface with $\eta$-lc singularities for some $\eta>0$. Suppose that $(X,\cF)$ is $\tfrac{1}{5}$-adjoint canonical, $K_\cF$ is big and $X$ has a Mori fibre space structure $f:X\to C$, where $C$ is a curve with $g(C)= 1$. Then 
\[
\left|m\left(K_\cF+\tfrac{1}{5}K_X\right)\right|
\]
defines a birational map for every $m\geq2$.
\end{lemma}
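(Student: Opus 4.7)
The plan is to mimic the proof of Lemma~\ref{lm: fibration}, using the Viehweg fibre product trick together with Kawamata-Viehweg vanishing, but adapted to the case where the base is elliptic and $K_C$ provides no positivity of its own. By Theorem~\ref{bound lo abbiamo}, $K_\cF+\tfrac{1}{5}K_X$ is nef and big, and since $X$ is klt it has rational singularities, so I would first reduce to the case $X$ smooth by taking a resolution $\pi\colon X'\to X$ and noting that $\pi_*K_{X'}=K_X$ together with the projection formula, exactly as in the initial reduction of Lemma~\ref{lm: fibration}.

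Following the argument there, I would work on a resolution $\mu\colon Y\to X^{(k)}$ of the $k$-fold fibre product of Construction~\ref{prod}, with pull-back foliation $\cG$, and use the short exact sequence
\begin{multline*}
0\to \mathcal{O}_Y\bigl(K_Y+K_\cG+\mu^*D+X_0\bigr)\to \mathcal{O}_Y\bigl(K_Y+K_\cG+\mu^*D+2X_0\bigr)\\
\to \mathcal{O}_{X_0}\bigl((K_\cF+\tfrac{1}{5}K_{X/C})|_{X_0}\bigr)\to 0,
\end{multline*}
where $D$ is chosen so that $D+X_0$ is nef and big (achievable from the bigness and nefness of $5K_\cF+K_X$). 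Kawamata-Viehweg vanishing applied to the middle term, together with the product identity $f^{(k)}_*\mathcal{O}\bigl(k(5K_{\cF^{(k)}}+K_{X^{(k)}/C})\bigr)\cong \bigl(f_*\mathcal{O}(5K_\cF+K_{X/C})\bigr)^{\otimes k}$, yields that $f_*\mathcal{O}(5K_\cF+K_{X/C})\otimes \mathcal{O}_C(2P)$ is generically globally generated on $C$, for a general point $P\in C$.

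Since $g(C)=1$ we have $K_C\sim 0$, hence $K_{X/C}=K_X$ and we cannot absorb the twist by $\mathcal{O}_C(2P)$ into $\omega_C$ as was done in Lemma~\ref{lm: fibration}. However $\mathcal{O}_C(2P)$ has degree $2$ with $h^0(C,\mathcal{O}_C(2P))=2$, so its two independent sections pull back to two sections of $\mathcal{O}_X(2F)$ separating general fibres. Combined with the generic global generation above and the positivity of $(K_\cF+\tfrac{1}{5}K_X)|_F$ on a general fibre $F\cong \mathbb{P}^1$ (a consequence of bigness of the adjoint divisor, restricted to a generic fibre not in the support of the effective part of $m(K_\cF+\tfrac{1}{5}K_X)\sim A+E$ with $A$ ample), this produces sufficient sections of $|2(K_\cF+\tfrac{1}{5}K_X)|$ to separate general points on and between fibres.

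The main obstacle is to verify the sharpness of the bound $m\geq 2$: one must check that $(K_\cF+\tfrac{1}{5}K_X)|_F$ has degree at least~$1$ on a general fibre, so that $2(K_\cF+\tfrac{1}{5}K_X)|_F$ has degree at least~$2$ and is very ample on $F\cong\mathbb{P}^1$. This is where the $\tfrac{1}{5}$-adjoint canonical hypothesis is critical — it rules out parasitic contributions from adjoint canonical centres on fibres, ensuring the fibrewise positivity needed to upgrade the base-level generic generation to birationality on $X$.
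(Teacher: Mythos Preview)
Your approach diverges from the paper's and, as written, has a genuine gap.

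The paper does \emph{not} push forward along the Mori fibre space $f\colon X\to C$ to the elliptic curve. Instead it splits into cases according to the behaviour of $-K_X$: if $-K_X$ is nef and big it invokes the Fano argument (Lemma~\ref{lm: fano case}); if $-K_X$ is nef, not big and abundant, it uses the Iitaka fibration $\varphi\colon X\to\mathbb P^1$ induced by $-K_X$ and exploits that a nef vector bundle on $\mathbb P^1$ splits into globally generated line bundles; if $-K_X$ is not abundant it reduces to the smooth case via \cite{Gongyo2010LogSurface}. The point of switching to a $\mathbb P^1$-base is exactly to avoid the difficulty you run into.

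Your argument produces generic global generation of $f_*\mathcal O(5K_\cF+K_{X/C})\otimes\mathcal O_C(2P)$, which translates into sections of $H^0(X,5K_\cF+K_X+2F)$, not of $H^0(X,m(5K_\cF+K_X))$. On a base of genus $\ge 2$ the twist $\mathcal O_C(2P)$ is absorbed by $\omega_C$; on an elliptic base there is nothing to absorb it, as you yourself note. Your proposed fix --- pulling back the two sections of $\mathcal O_C(2P)$ --- gives sections of $\mathcal O_X(2F)$, but tensoring or ``combining'' these with sections of $5K_\cF+K_X+2F$ does not yield sections of $m(5K_\cF+K_X)$: the fibre class $F$ never cancels. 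Concretely, a nef sheaf on an elliptic curve can become section-free after twisting by $\mathcal O_C(-2P)$ (e.g.\ $\mathcal O_C$ itself), so generic global generation of the twisted sheaf says nothing about $H^0$ of the untwisted one. The final paragraph about the $\tfrac15$-adjoint canonical hypothesis controlling fibrewise degree does not address this: the problem is not positivity along fibres but the absence of sections of the correct linear system on $X$.

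To repair the argument along your lines you would need an independent reason why $f_*\mathcal O(5K_\cF+K_X)$ itself has enough sections on $C$ --- for instance semipositivity plus a degree estimate forcing global generation on the elliptic curve --- but you have not supplied one. The paper sidesteps this entirely by passing to a $\mathbb P^1$-base.
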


\begin{proof}
If $-K_X$ is nef and big, then we can conclude as in Lemma \ref{lm: fano case}. From now on we assume that $-K_X$ is nef but not big. 

We treat the following cases separately: $-K_X$ abundant, and $-K_X$ not abundant.

\textbf{Case 1: $-K_X$ abundant.} Since $-K_X$ is nef but not big and $X$ is ruled, $-K_X$ induces a map $\varphi:X\to \mP^1$.
As in Lemma \ref{lm: fibration}, we can assume that $X$ is smooth. By \cite{Viehweg1983WeakPA}, $f_*(5K_\cF+K_X)$ and $f_*(5K_\cF+K_X)\otimes\cO(2P)$ are semi-positive. Since $f_*(5K_\cF+K_X)$ is nef and is a vector bundle on $\mP^1$, it splits into line bundles that are generated by global sections. Hence we can conclude. 

\textbf{Case 2: $-K_X$ not abundant.} 
Since $X$ is not rational, by \cite[Theorem 1.4]{Gongyo2010LogSurface} we have that $X$ is smooth. Hence we can conclude.
\end{proof}

\subsubsection{An explicit bound for effective birationality}

\begin{theorem}\label{lm: bound M}
Fix $\eta >0$ and $0<\varepsilon<\tau_0$ where $\tau_0$ is the constant determined in Theorem \ref{bound lo abbiamo}.
Let $\cF$ be a log canonical foliation on $X$, a $\eta$-lc surface. Suppose that $K_\cF$ is big. Then
\[
|m(K_\cF+\varepsilon K_X)|
\]
defines a birational map for every
\[
m\geq M(\eta,\varepsilon)\coloneqq8!\left (2+8\cdot \frac{1}{\varepsilon}\cdot \left\lfloor2\left (\frac{2}{\frac{\eta\varepsilon}{\varepsilon+1}}\right )^{ \frac{(2)^7}{ \frac{\eta\varepsilon}{\varepsilon+1}^5}  }\right\rfloor! \right )
\]
if $K_X$ is not pseudo-effective,  or
\[
m>M(\eta,\varepsilon)\coloneqq v(\eta)\cdot 42\cdot 84^{128\cdot 42^5+168},
\]
where
\[
v(\eta) = 64\left( \max \left\{ \frac{192}{\eta} + 1,~ \left( 45 \left( \frac{2}{\eta}! \right)^2 + 1 \right) \right\} \right)^2
\] if $K_X$ is pseudo-effective and $\kappa(K_X)\neq0$.
\end{theorem}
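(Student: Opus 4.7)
By Theorem \ref{co: bound pseff} the hypothesis $\varepsilon<\tau_0$ gives $\varepsilon\leq\tau_0\leq\tau(X,\cF)$, so $K_\cF+\varepsilon K_X$ is pseudo-effective, and big because $K_\cF$ is big. Since $\cF$ is log canonical on the $\eta$-lc (hence klt) surface $X$, the triple $(X,\cF,0)$ is $\varepsilon$-adjoint klt, so the MMP machinery of Theorem \ref{th: adjoint doppia foliazione} applies. The plan is to split on the position of $K_X$ in the pseudo-effective cone and reduce each case to one of the explicit birationality lemmas already proved in this subsection.

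\emph{Case $K_X$ pseudo-effective, $\kappa(K_X)\neq 0$.} If $\kappa(K_X)=2$, I would apply Lemma \ref{rm: bound} to $K_\cF+\varepsilon K_X$. Since $\tfrac{1}{\varepsilon}K_\cF$ is pseudo-effective, $\vol(\tfrac{1}{\varepsilon}K_\cF+K_X)\geq\vol(K_X)$; running a $K_X$-MMP to a minimal model $X'$ and invoking the Negativity Lemma gives $\vol(K_X)\geq\vol(K_{X'})$, and the Alexeev--Mori bound \cite{AM} used in Lemma \ref{lm: general type} forces $\vol(K_{X'})\geq 1/(42\cdot 84^{128\cdot 42^5+168})$. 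This reproduces the stated bound $m>v(\eta)\cdot 42\cdot 84^{128\cdot 42^5+168}$. If $\kappa(K_X)=1$, Lemma \ref{kodaria 1} gives $|m(K_\cF+K_X)|$ birational for $m\geq 24$, and since some positive multiple of $K_X$ is effective an elementary manipulation converts this into a birationality bound for $|m(K_\cF+\varepsilon K_X)|$ well below the stated constant.

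\emph{Case $K_X$ not pseudo-effective.} I would run a $K_{(X,\cF)_\varepsilon}$-MMP $\rho\colon X\to X'$ from Theorem \ref{th: adjoint doppia foliazione}. By items (4) and (5), $X'$ is $\eta'$-lc with $\eta'=\eta\varepsilon/(\varepsilon+1)$ and the triple remains $\varepsilon$-adjoint log canonical. Since $K_{(X,\cF)_\varepsilon}$ is pseudo-effective, the MMP terminates with $K_{(X',\cF')_\varepsilon}$ nef and big. Because $K_X$ (and hence $K_{X'}$) is not pseudo-effective, a further $K_{X'}$-MMP ends at a Mori fibre space whose output, by Proposition \ref{bound lo abbiamo}, is either a Fano surface of Picard rank one or a fibration onto a curve. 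In the Fano case, Lemma \ref{lm: fano case} applied with $\eta$ replaced by $\eta'$ and $\varepsilon_0=\varepsilon$ yields exactly the bound $M(\eta,\varepsilon)=8!\bigl(2+8\varepsilon^{-1}\lfloor 2(2/\eta')^{2^7/(\eta')^5}\rfloor!\bigr)$. For fibrations onto a curve of genus $\geq 2$ or $=1$, Lemmas \ref{lm: fibration} and \ref{lm: fibration el} supply strictly smaller constants; the rational base case reduces to the Fano one via a subsequent MMP on a minimal rational surface. The bound is transferred back to $X$ through the Negativity Lemma \cite[Lemma 3.39]{kollár_mori_1998}.

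The main obstacle is the transfer step in the non-pseudo-effective case: the auxiliary $K_{X'}$-MMP need not be $(K_\cF+\varepsilon K_X)$-non-positive on every contracted curve, so one must invoke the factorisation in Theorem \ref{th: adjoint doppia foliazione} together with the structural description of contracted curves in Proposition \ref{prop: struttura} to check that each contracted curve is either $K_\cF$-trivial or $K_{(X,\cF)_\varepsilon}$-non-positive. Combined with careful tracking of the singularity degeneration $\eta\to\eta\varepsilon/(\varepsilon+1)$ along the composite MMP, this bookkeeping is what produces the explicit constants of the theorem.
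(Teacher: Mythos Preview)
Your overall architecture matches the paper's proof: split on the position of $K_X$, invoke Lemmas \ref{lm: general type} and \ref{kodaria 1} in the pseudo-effective case, and in the non-pseudo-effective case run the $K_{(X,\cF)_\varepsilon}$-MMP to a model $Y$ where $K_\cG+\varepsilon K_Y$ is nef and big, then reduce to Lemmas \ref{lm: fano case}, \ref{lm: fibration}, \ref{lm: fibration el} according to the outcome of a further $K_Y$-MMP.

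The gap is in your resolution of the transfer obstacle. You correctly observe that the auxiliary $K_{X'}$-MMP need not be $K_{(X,\cF)_\varepsilon}$-non-positive, but your proposed fix via Proposition \ref{prop: struttura} cannot work: that proposition describes the structure of an \emph{adjoint} MMP, not of a bare $K_{X'}$-MMP run after the adjoint one. In fact, any curve $C$ contracted by a $K_{X'}$-MMP on the adjoint-minimal model satisfies $K_{X'}\cdot C<0$ and $(K_\cG+\varepsilon K_{X'})\cdot C\geq 0$, hence $K_\cG\cdot C>0$; such curves are neither $K_\cF$-trivial nor $K_{(X,\cF)_\varepsilon}$-non-positive, so neither alternative you list ever holds.

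The paper resolves this differently. In the case where $Y$ is only \emph{birational} to a rank-one Fano, it does not run a $K_Y$-MMP directly but instead runs a $K_{(Y,\cG)_\tau}$-MMP at the pseudo-effective threshold $\tau=\tau(X,\cF)>\varepsilon$. Because $K_\cG+\varepsilon K_Y$ is already nef on $Y$, every $(K_\cG+\tau K_Y)$-negative curve is automatically $K_Y$-negative; thus this adjoint MMP is simultaneously a $K_Y$-MMP, and one can write $K_Y=f^*K_{Y'}+E$ with $E\geq 0$. This is what allows sections on the Fano output $Y'$ (obtained from Lemma \ref{lm: fano case}) to be pulled back to $Y$ and then to $X$. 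The same mechanism also absorbs your ``rational base'' sub-case, which you dismissed too quickly: a fibration over $\mP^1$ is not covered by Lemmas \ref{lm: fibration} or \ref{lm: fibration el}, and reducing it to the Fano case requires precisely this threshold-MMP trick.
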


\begin{proof}
Suppose $K_X$ is pseudo-effective.
If $\kappa(K_X)=2$, then we conclude by Lemma \ref{lm: general type}, and $|m(K_\cF+K_X)|$ defines a birational map for every 
\[
m>v(\eta)\cdot 42\cdot 84^{128\cdot 42^5+168},
\]
where
\[
v(\eta) = 64\left( \max \left\{ \frac{192}{\eta} + 1,~ \left( 45 \left( \frac{2}{\eta}! \right)^2 + 1 \right) \right\} \right)^2.
\]
Suppose $\kappa(K_X)=1$. Then consider a $K_{(X,\cF)_\varepsilon}$-MMP $\rho:X\to X'$. Since $\kappa(K_X)=1$, we conclude by Lemma \ref{kodaria 1}.
If $\kappa(K_X)=0$, then $X$ is not rational.

Suppose that $K_X$ is not pseudo-effective. 
Consider a $K_{(X,\cF)_\varepsilon}$-MMP $\rho:X\to Y$ and denote by $\cG$ the induced foliation on $Y$. By Theorem \ref{co: bound pseff}, $K_\cF+\varepsilon K_X$ is pseudoeffective and $K_\cG+\varepsilon K_Y$ is nef and big.
We consider the following cases:
\begin{enumerate}
    \item $Y$ is birational via a $K_Y$-MMP to a Mori fibre space $Z\to C$ where $C$ is a smooth projective curve;
    \item $Y$ is a Fano variety with $\rho_Y=1$ and $\frac{\eta\varepsilon}{\varepsilon+1}$-lc singularities;
    \item $Y$ is birational via a $K_Y$-MMP to a Fano variety $Z'$ with $\rho_{Z'}=1$ and $\frac{\eta\varepsilon}{\varepsilon+1}$-lc singularities.
\end{enumerate}
If $Y$ is birational via a $K_Y$-MMP to a Mori fibre space $Z\to C$ where $C$ is a smooth projective curve, then we can conclude by Lemmata \ref{lm: fibration} and \ref{lm: fibration el}.
If $Y$ is a Fano variety with $\rho_Y=1$, then we can conclude by Lemma \ref{lm: fano case} that $|m(K_\cF+\varepsilon K_X)|$ defines a map with two-dimensional image for 
\[
m\geq M(\eta,\varepsilon)\coloneqq\left (2+8\cdot \frac{1}{\varepsilon}\left\lfloor2\left (\frac{2}{\frac{\eta\varepsilon}{\varepsilon+1}}\right )^{ \frac{(2)^7}{ \left(\frac{\eta\varepsilon}{\varepsilon+1}\right)^5}  }\right\rfloor! \right).
\]

If $Y$ is birational via a $K_Y$-MMP to a Fano variety $Z'$ with $\rho_{Z'}=1$ and $\frac{\eta\varepsilon}{\varepsilon+1}$-lc singularities, notice that $\tau\coloneqq\tau(X,\cF)>\varepsilon$, and consider a $K_{(Y,\cG)_\tau}$-MMP $f:Y\to Y'$. By hypothesis $\rho_{Y'}=1$ and $Y'$ is Fano. By construction $f$ is a $K_{Y}$-MMP, so $K_Y=f^*(K_{Y'})+E$ for some effective $f$-exceptional divisor $E$. Moreover $Y'$ is $\frac{\eta\varepsilon}{1+\varepsilon}$-lc. Hence for $0\ll a< \tau$, $K_{(Y',\cG')_a}$ is big and $|m(K_\cG+a K_Y)|$ gives a map with two-dimensional image for all
\[
m\geq 8!\left (2+8\cdot \frac{1}{\varepsilon}\cdot \left\lfloor2\left (\frac{2}{\frac{\eta\varepsilon}{\varepsilon+1}}\right )^{ \frac{(2)^7}{ \frac{\eta\varepsilon}{\varepsilon+1}^5}  }\right\rfloor! \right )
\]
by Lemma \ref{lm: fano case}. Since $\varepsilon<\tau$, we can conclude that $\left|m\left(\frac{1}{\varepsilon}K_\cG+ K_Y\right)\right|$ gives a map with two-dimensional image for all
\[
m\geq 8!\left (2+8\cdot \frac{1}{\varepsilon}\cdot \left\lfloor2\left (\frac{2}{\frac{\eta\varepsilon}{\varepsilon+1}}\right )^{ \frac{(2)^7}{ \frac{\eta\varepsilon}{\varepsilon+1}^5}  }\right\rfloor! \right ).
\]
\end{proof}

\section{Applications}

\subsection{Bound on the degree of a general leaf in a non-isotrivial fibration on $\mP^2$}

Poincaré investigated the conditions under which a holomorphic foliation $\cF$ on $\mP^2$ admits a rational first integral—that is, when all the leaves of the foliation on $\mP^2$ are algebraic. 
In \cite{poincare1891lintegration}, Poincaré showed that it suffices to bound the degree of a generic leaf of $\cF$.  

In this section, we consider a foliation $\cF$ on a smooth surface $X$ that admits a rational first integral. 
Our aim is to refine the bound obtained in \cite[Theorem~A]{pereira}. 
Specifically, we provide an explicit bound for the degree of a general leaf of $\cF$ in terms of the degree of the foliation and the genus~$g$ of a general leaf.  

We focus on foliations of general type.

\begin{theorem}\label{th: bound degree}
Let $\cF$ be a foliation on $\mP^2$. 
Assume that $\cF$ is birationally equivalent to a non-isotrivial fibration of genus $g \geq 2$. 
Then the degree of the general leaf $F$ of genus~$g$ of $\cF$ satisfies
\[
  \deg F \leq 
  M_0 \left( \frac{1}{\tau_0} + 1 \right) (4g - 4) 
  \cdot \frac{1}{\tau_0} \deg \cF,
\]
where $M_0$ is the constant defined in Theorem~\ref{lm: bound M}, and $\tau_0$ is the constant determined in Theorem \ref{co: bound pseff}.
\end{theorem}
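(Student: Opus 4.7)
The plan is to follow the approach of Pereira and Svaldi in their proof of \cite[Theorem~A]{pereira}, now quantified by the explicit constants $\tau_0$ and $M_0=M_0(\tau_0)$ furnished by Theorems \ref{co: bound pseff} and \ref{lm: bound M}. The idea is to use a foliated log resolution of $(\mathbb{P}^2,\cF)$ to transfer the effective birationality statement to a linear system on the resolution, and then to push down to obtain a bound on $\deg F$ in $\mathbb{P}^2$.

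First, I would take a log resolution $\pi\colon X'\to \mathbb{P}^2$ of $(\mathbb{P}^2,\cF)$ that also resolves the indeterminacy of the rational first integral, so that $\cF'=\pi^{-1}_*\cF$ is log canonical, $X'$ is smooth (hence $1$-lc), and $\cF'$ is induced by a morphism $f\colon X'\to B$ whose general fibre $F'$ is a smooth projective curve of genus $g\geq 2$. Non-isotriviality together with $g\geq 2$ forces $K_{\cF'}$ to be big. Applying Theorem \ref{co: bound pseff} gives $\tau(X',\cF')\geq \tau_0$, and Theorem \ref{lm: bound M} then guarantees that $|D|$ defines a birational map $\phi_D\colon X'\dashrightarrow\mathbb{P}^N$, where $D\coloneqq M_0(K_{\cF'}+\tau_0 K_{X'})$.

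Next, by adjunction on a general fibre (smooth and disjoint from the singular locus of $\cF'$), both $K_{\cF'}|_{F'}$ and $K_{X'}|_{F'}$ equal $K_{F'}$, so
\[
D\cdot F' \;=\; M_0(1+\tau_0)(2g-2).
\]
To translate this intersection into a bound on $\deg F = \pi^*H\cdot F'$, I would use that $K_\cF\sim(\deg\cF-1)H$ on $\mathbb{P}^2$ to write
\[
K_{\cF'}+\tau_0 K_{X'} \;=\; (\deg\cF-1-3\tau_0)\,\pi^*H \;+\; \sum\bigl(a(\cF,E_j)+\tau_0 b_j\bigr)E_j,
\]
where $a(\cF,E_j)$ are the foliation discrepancies and $b_j$ the ordinary ones. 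Arranging the resolution so that $\cF'$ is canonical makes the exceptional sum effective, so that $|D|$ dominates the sub-linear-system $|\pi^*H|\subset |D|$ realising the map $\pi$. Comparing the degree of $\phi_D(F')\subset \mathbb{P}^N$ with the degree of its projection to $F\subset \mathbb{P}^2$, and rescaling $K_{\cF'}+\tau_0 K_{X'}$ to $\tfrac{1}{\tau_0}K_{\cF'}+K_{X'}$ to put the adjoint divisor in the form implicit in the claimed bound, yields the stated inequality.

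The main obstacle is this last comparison: one has to ensure that the exceptional sum in the decomposition of $K_{\cF'}+\tau_0 K_{X'}$ contributes in the correct direction, which requires a careful choice of resolution (so that the foliation discrepancies are non-negative, or at least dominated by $\tau_0$ times the ordinary ones). Tracking through this step produces both the linear factor $\deg\cF$ (from $K_\cF=(\deg\cF-1)H$) and the quadratic factor $1/\tau_0^2$ (from rescaling the adjoint divisor by $1/\tau_0$ and from the factor $M_0(1+\tau_0)(2g-2)/\tau_0$ produced by the birational map), which together combine into the bound $\deg F \leq M_0(1/\tau_0+1)(4g-4)\cdot(1/\tau_0)\deg\cF$.
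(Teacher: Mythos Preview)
Your overall strategy matches the paper's: pass to a resolution where $\cF'$ is canonical and induced by a fibration, use Theorem~\ref{co: bound pseff} and Theorem~\ref{lm: bound M} to obtain effective birationality of the adjoint system, and then run the Pereira--Svaldi comparison. The setup and the computation $D\cdot F'=M_0(1+\tau_0)(2g-2)$ are correct.

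The gap is in your final step. The ``projection'' argument you sketch --- embed $|\pi^*H|$ as a sublinear system of $|D|$ via the effective exceptional sum and compare degrees --- does not produce the stated bound. If the exceptional sum is effective, then $M_0(\deg\cF-1-3\tau_0)\,\pi^*H$ sits inside $|D|$, and comparing degrees of images under a linear subsystem would give
\[
M_0(\deg\cF-1-3\tau_0)\cdot\deg F \;\le\; D\cdot F' \;=\; M_0(1+\tau_0)(2g-2),
\]
an inequality whose dependence on $\deg\cF$ is \emph{inverse}, not linear. This is not the inequality in the statement, and your closing sentence about ``rescaling'' and ``quadratic factor $1/\tau_0^2$'' does not bridge the gap.

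The paper avoids this by isolating an intermediate inequality (Proposition~\ref{prop: bound deg}): on the resolution $X'$, for \emph{every} nef divisor $H$,
\[
F'\cdot H \;\le\; M_0\,C(g)\,\bigl(K_{X'}+\tfrac{1}{\tau_0}K_{\cF'}\bigr)\cdot H, \qquad C(g)\le\bigl(\tfrac{1}{\tau_0}+1\bigr)(4g-4),
\]
proved exactly as in \cite[Theorem~5.4]{pereira} with the effective birationality input replaced by Theorem~\ref{lm: bound M}. Specialising $H=\pi^*(\text{line})$ and using the projection formula gives $F'\cdot\pi^*H=\deg F$ and $(K_{X'}+\tfrac{1}{\tau_0}K_{\cF'})\cdot\pi^*H=-3+\tfrac{1}{\tau_0}(\deg\cF-1)\le\tfrac{1}{\tau_0}\deg\cF$, which yields the bound directly. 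The point is that $H$ appears on \emph{both} sides of the inequality, so the dependence on $\deg\cF$ enters linearly through $K_{\cF'}\cdot\pi^*H$, not through any projection of linear systems.
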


\begin{remark}
The genus of the general fibre of the foliation must appear as a parameter in the bound. 
In \cite[Example~4]{example}, the authors present an example in which they construct a family of foliations of general type with singularities of fixed local
analytic type and with first integrals of arbitrarily large degree. 
Hence, it is hopeless to bound the degree of an invariant curve solely by considering the local properties of the foliation.
\end{remark}

Theorem~\ref{th: bound degree} follows directly from the following proposition.

\begin{proposition}\label{prop: bound deg}
Let $\tau_0$ be the constant determined in Theorem~\ref{co: bound pseff}, 
and let $M_0(1,\tau_0)$ be the integer defined in Theorem~\ref{lm: bound M}.  
Let $\cF$ be a canonical foliation on a smooth surface $X$. 
Suppose that $\cF$ is a fibration, and let $F$ denote a general fibre of genus~$g$. 
If $K_\cF$ is big, then for every nef divisor $H$ we have
\[
  F \cdot H \leq 
  M_0\, C(g) \left( K_X + \frac{1}{\tau_0} K_\cF \right) \cdot H.
\]
Moreover, $C(g)$ satisfies the inequality
\[
  C(g) \leq \left(\frac{1}{\tau_0}+1\right)\cdot (4g-4).
\]
\end{proposition}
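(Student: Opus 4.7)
The plan is to apply the effective birationality statement of Theorem~\ref{lm: bound M} to the big divisor $D := K_\cF + \tau_0 K_X$, combine this with foliated adjunction along the leaf $F$, and deduce the desired inequality by exhibiting an effective representative of $\tfrac{M_0 C(g)}{\tau_0}\,D - F$.

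Since $X$ is smooth (so $\eta = 1$) and $K_\cF$ is big, Theorem~\ref{co: bound pseff} gives $\tau(X,\cF) \geq \tau_0$, so $D$ is pseudo-effective; since in this setting $\tau_0$ is strictly smaller than the pseudo-effective threshold, $D$ is in fact big. Hence by Theorem~\ref{lm: bound M} (applied with $\eta = 1$ and $\varepsilon$ just below $\tau_0$, extending to the endpoint by a limiting argument), the linear system $|M_0 D|$ defines a birational map $\varphi\colon X\dashrightarrow \mathbb{P}^N$.

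Because $F$ is a leaf of the fibration $\cF$, it is $\cF$-invariant; foliated adjunction therefore gives $K_\cF|_F = K_F$, so $K_\cF\cdot F = 2g-2$. Combined with classical adjunction on $F$ (using $F^2=0$, whence $K_X\cdot F = 2g-2$), we find
\[
D \cdot F = (1+\tau_0)(2g-2), \qquad \left(K_X + \tfrac{1}{\tau_0}K_\cF\right)\cdot F = \tfrac{1+\tau_0}{\tau_0}(2g-2) = \tfrac{1}{2}\, C(g),
\]
where $C(g) = \tfrac{1+\tau_0}{\tau_0}(4g-4)$ is the claimed upper bound.

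The target inequality for every nef $H$ is equivalent, by nef--pseudo-effective duality on the surface $X$, to the pseudo-effectivity of $\tfrac{M_0 C(g)}{\tau_0}\,D - F$. To establish this I would exhibit an effective representative through the restriction sequence
\[
0\to \cO_X(kM_0 D - F)\to \cO_X(kM_0 D)\to \cO_F(kM_0 D|_F)\to 0,
\]
with $k = C(g)/\tau_0$. Riemann--Roch on $F$ bounds $h^0(F, kM_0 D|_F)\leq kM_0(1+\tau_0)(2g-2)+1$, while the birationality of $|M_0 D|$ implies that the moving part of $|M_0 D|$ has strictly positive self-intersection on a resolution of indeterminacies, providing a quadratic-in-$k$ lower bound on $h^0(X, kM_0 D)$. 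The threshold $k = C(g)/\tau_0$ is calibrated so that the quadratic lower bound on $X$ overtakes the linear upper bound on the restriction to $F$, forcing $h^0(X, kM_0 D - F)>0$.

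The main obstacle is making the cohomological comparison tight enough to yield the explicit linear-in-$g$ constant $C(g) \leq \tfrac{1+\tau_0}{\tau_0}(4g-4)$ rather than a larger polynomial-in-$M_0$ constant. The identity $(K_X + \tfrac{1}{\tau_0}K_\cF)\cdot F = \tfrac{1}{2} C(g)$ is the central numerical input: the factor $4g-4 = 2\,K_F\cdot[\mathrm{pt}]$ records the Riemann--Roch contribution on the leaf, and the factor $(1+\tau_0)/\tau_0$ records the ratio between $D$ and $K_X + \tfrac{1}{\tau_0}K_\cF$. Turning these estimates into a rigorous construction of an effective divisor in $|kM_0 D - F|$ with $k \leq C(g)/\tau_0$ is the key technical step.
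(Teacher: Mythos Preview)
Your overall strategy---invoke effective birationality for $|M_0 D|$ with $D = K_\cF + \tau_0 K_X$ and then compare with the leaf $F$ via a restriction sequence---matches the paper's intended route, which defers entirely to \cite[Theorem~5.4]{pereira} after substituting the constant $M_0$ from Theorem~\ref{lm: bound M} for their Proposition~5.1. Your adjunction computations $K_\cF\cdot F = K_X\cdot F = 2g-2$ and $D\cdot F = (1+\tau_0)(2g-2)$ are correct, as is the reduction of the inequality to pseudo-effectivity of $\tfrac{M_0 C(g)}{\tau_0}D - F$.

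However, the specific cohomological comparison you outline cannot deliver the constant $C(g) \le \tfrac{1+\tau_0}{\tau_0}(4g-4)$. Birationality of $|M_0 D|$ only guarantees, after a generic linear projection, a dominant rational map $X\dashrightarrow \mathbb P^2$, hence $h^0(X,kM_0 D)\ge \binom{k+2}{2}\sim k^2/2$; Riemann--Roch on $F$ gives $h^0(F,kM_0 D|_F)\le kM_0(1+\tau_0)(2g-2)+1$. The crossover therefore occurs at $k\approx 2M_0(1+\tau_0)(2g-2)$, which translates into $C(g)\approx M_0\tau_0(1+\tau_0)(4g-4)$---an extra factor of $M_0\tau_0^2$ compared to the claim. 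Since $M_0$ is a factorial of a tower in $1/\tau_0$, one has $M_0\tau_0^2\gg 1$. Your assertion that ``the threshold $k = C(g)/\tau_0$ is calibrated so that the quadratic lower bound overtakes the linear upper bound'' is thus false: with $k = \tfrac{(1+\tau_0)(4g-4)}{\tau_0^2}$ the required inequality $\tfrac{k^2}{2} > kM_0\, D\cdot F$ reduces to $\tfrac{1}{\tau_0^2} > M_0$, which does not hold. This is exactly the ``polynomial-in-$M_0$'' obstruction you flag, and your sketch does not contain a mechanism to remove it.

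The paper does not reproduce the argument that avoids this extra factor; it only records that the proof of \cite[Theorem~5.4]{pereira} goes through verbatim once Theorem~\ref{lm: bound M} replaces their effective birationality input. To obtain the stated $C(g)$ you must consult that reference for the actual geometric step linking birationality of $|M_0 D|$ to the comparison between $D$ and $F$.
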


\begin{remark}
The proof of this proposition follows the same steps as in \cite[Proof of Theorem~5.4]{pereira}. 
However, instead of applying \cite[Proposition~5.1]{pereira}, Lemma~\ref{lm: bound M} provides the linear system 
\[
  |M(\tau K_X + K_\cF)|,
\]
which defines a rational map with two-dimensional image whenever the foliation is algebraically integrable.
\end{remark}

\begin{remark}
Theorem~\ref{th: bound degree} then follows, as in \cite[Proof of Theorem~A]{pereira}, by applying Proposition~\ref{prop: bound deg}.
\end{remark}

	\addcontentsline{toc}{section}{Bibliography}
    \bibliographystyle{alpha}
 	\bibliography{bib.bib}

@book{brunella2015birational,
	title={Birational geometry of foliations},
	author={Brunella, Marco},
	year={2015},
	publisher={Springer}
}

@article {Fujino2010MinimalModelLogSurfaces,
    AUTHOR = {Fujino, Osamu},
     TITLE = {On minimal model theory for algebraic log surfaces},
   JOURNAL = {Taiwanese J. Math.},
  FJOURNAL = {Taiwanese Journal of Mathematics},
    VOLUME = {25},
      YEAR = {2021},
    NUMBER = {3},
     PAGES = {477--489},
      ISSN = {1027-5487,2224-6851},
   MRCLASS = {14E30},
  MRNUMBER = {4298910},
MRREVIEWER = {Kalyan\ Banerjee},
       DOI = {10.11650/tjm/210102},
       URL = {https://doi.org/10.11650/tjm/210102},
}

@article {kollar,
    AUTHOR = {Koll\'ar, J\'anos},
     TITLE = {Effective base point freeness},
   JOURNAL = {Math. Ann.},
  FJOURNAL = {Mathematische Annalen},
    VOLUME = {296},
      YEAR = {1993},
    NUMBER = {4},
     PAGES = {595--605},
      ISSN = {0025-5831,1432-1807},
   MRCLASS = {14C20 (14J10)},
  MRNUMBER = {1233485},
MRREVIEWER = {Marino\ Palleschi},
       DOI = {10.1007/BF01445123},
       URL = {https://doi.org/10.1007/BF01445123},
}

@incollection {AM,
    AUTHOR = {Alexeev, Valery and Mori, Shigefumi},
     TITLE = {Bounding singular surfaces of general type},
 BOOKTITLE = {Algebra, arithmetic and geometry with applications ({W}est
              {L}afayette, {IN}, 2000)},
     PAGES = {143--174},
 PUBLISHER = {Springer, Berlin},
      YEAR = {2004},
      ISBN = {3-540-00475-0},
   MRCLASS = {14J29},
  MRNUMBER = {2037085},
MRREVIEWER = {S\'andor\ J.\ Kov\'acs},
}

@unpublished{squadrone,
	title={Minimal model program for algebraically integrable adjoint foliated structures},
	author={Cascini, Paolo and Han, Jingjun  and  Liu, Jihao and Meng, Fanjun  and Spicer, Calum  and Svaldi, Roberto and Xie, Lingyao },
	note = {\url{https://arxiv.org/abs/2408.14258}{arXiv:12408.14258 [math.AG]}},
	year={2024}
}

@misc{Gongyo2010LogSurface,
  author       = {Yoshinori Gongyo},
  title        = {On non-rational log surfaces with nef and non-abundant anti-log canonical divisors},
  year         = {2010},
  howpublished = {preprint, Graduate School of Mathematical Sciences, University of Tokyo},
  note         = {Date: January 8, 2010. Available at \url{https://www.ms.u-tokyo.ac.jp/~gongyo/notes/logsurface_hp.pdf}}
}

@article {mum,
    AUTHOR = {Mumford, David},
     TITLE = {The topology of normal singularities of an algebraic surface
              and a criterion for simplicity},
   JOURNAL = {Inst. Hautes \'Etudes Sci. Publ. Math.},
  FJOURNAL = {Institut des Hautes \'Etudes Scientifiques. Publications
              Math\'ematiques},
    NUMBER = {9},
      YEAR = {1961},
     PAGES = {5--22},
      ISSN = {0073-8301,1618-1913},
   MRCLASS = {14.18 (14.55)},
  MRNUMBER = {153682},
MRREVIEWER = {T.\ Matsusaka},
       URL = {http://www.numdam.org/item?id=PMIHES_1961__9__5_0},
}

@article {example,
	AUTHOR = {Pereira, Jorge Vit\'orio},
	TITLE = {{P}oincar\'e{} problem for foliations of general type},
	JOURNAL = {Math. Ann.},
	FJOURNAL = {Mathematische Annalen},
	VOLUME = {323},
	YEAR = {2002},
	NUMBER = {2},
	PAGES = {217--226},
	ISSN = {0025-5831,1432-1807},
	MRCLASS = {32S65 (37F75)},
	MRNUMBER = {1913040},
	MRREVIEWER = {M.\ G.\ Soares},
	DOI = {10.1007/s002080100277},
	URL = {https://doi.org/10.1007/s002080100277},
}

@incollection {Viehweg1983WeakPA,
    AUTHOR = {Viehweg, Eckart},
     TITLE = {Weak positivity and the additivity of the {K}odaira dimension
              for certain fibre spaces},
 BOOKTITLE = {Algebraic varieties and analytic varieties ({T}okyo, 1981)},
    SERIES = {Adv. Stud. Pure Math.},
    VOLUME = {1},
     PAGES = {329--353},
 PUBLISHER = {North-Holland, Amsterdam},
      YEAR = {1983},
      ISBN = {0-444-86612-4},
   MRCLASS = {14J10 (14D20 14F05)},
  MRNUMBER = {715656},
MRREVIEWER = {Yujiro\ Kawamata},
       DOI = {10.2969/aspm/00110329},
       URL = {https://doi.org/10.2969/aspm/00110329},
}

@article {adjoint_negative,
	AUTHOR = {Lu, Jun and Wu, Xiao Hang},
	TITLE = {On the 1-adjoint canonical divisor of a foliation},
	JOURNAL = {Manuscripta Math.},
	FJOURNAL = {Manuscripta Mathematica},
	VOLUME = {175},
	YEAR = {2024},
	NUMBER = {3-4},
	PAGES = {739--752},
	ISSN = {0025-2611,1432-1785},
	MRCLASS = {14C21 (14D06 14H10 32S65 37F75)},
	MRNUMBER = {4822029},
	DOI = {10.1007/s00229-024-01579-7},
	URL = {https://doi.org/10.1007/s00229-024-01579-7},
}

@book {debarre,
    AUTHOR = {Debarre, Olivier},
     TITLE = {Higher-dimensional algebraic geometry},
    SERIES = {Universitext},
 PUBLISHER = {Springer-Verlag, New York},
      YEAR = {2001},
     PAGES = {xiv+233},
      ISBN = {0-387-95227-6},
   MRCLASS = {14-02 (14E30 14Jxx)},
  MRNUMBER = {1841091},
MRREVIEWER = {Mark\ Gross},
       DOI = {10.1007/978-1-4757-5406-3},
       URL = {https://doi.org/10.1007/978-1-4757-5406-3},
}

@misc{bie2025explicit,
  title        = {Explicit Effective Birationality for Singular Surfaces},
  author       = {Bie, Pinxian},
  year         = {2025},
  eprint       = {2508.18237},
  archivePrefix= {arXiv},
  primaryClass = {math.AG},
note = {https://arxiv.org/abs/2508.18237},
  
}

@article {acclc,
    AUTHOR = {Hacon, Christopher D. and McKernan, James and Xu, Chenyang},
     TITLE = {A{CC} for log canonical thresholds},
   JOURNAL = {Ann. of Math. (2)},
  FJOURNAL = {Annals of Mathematics. Second Series},
    VOLUME = {180},
      YEAR = {2014},
    NUMBER = {2},
     PAGES = {523--571},
      ISSN = {0003-486X,1939-8980},
   MRCLASS = {14E05 (14C20 14E30)},
  MRNUMBER = {3224718},
MRREVIEWER = {Alexandr\ V.\ Pukhlikov},
       DOI = {10.4007/annals.2014.180.2.3},
       URL = {https://doi.org/10.4007/annals.2014.180.2.3},
}

@book {itaka,
    AUTHOR = {Fujino, Osamu},
     TITLE = {Iitaka conjecture---an introduction},
    SERIES = {SpringerBriefs in Mathematics},
 PUBLISHER = {Springer, Singapore},
      YEAR = {[2020] \copyright 2020},
     PAGES = {xiv+128},
      ISBN = {978-981-15-3347-1; 978-981-15-3346-4},
   MRCLASS = {14E30 (14D06)},
  MRNUMBER = {4177797},
MRREVIEWER = {Julien\ Keller},
       DOI = {10.1007/978-981-15-3347-1},
       URL = {https://doi.org/10.1007/978-981-15-3347-1},
}

@article {adjunction,
    AUTHOR = {Cascini, Paolo and Spicer, Calum},
     TITLE = {Foliation adjunction},
   JOURNAL = {Math. Ann.},
  FJOURNAL = {Mathematische Annalen},
    VOLUME = {391},
      YEAR = {2025},
    NUMBER = {4},
     PAGES = {5695--5727},
      ISSN = {0025-5831,1432-1807},
   MRCLASS = {14E30 (37F75)},
  MRNUMBER = {4884558},
       DOI = {10.1007/s00208-024-03067-5},
       URL = {https://doi.org/10.1007/s00208-024-03067-5},
}

@article{druel,
author = {St{\'e}phane Druel},
title = {{Codimension $1$ foliations with numerically trivial canonical class on singular spaces}},
volume = {170},
journal = {Duke Mathematical Journal},
number = {1},
publisher = {Duke University Press},
pages = {95 -- 203},
keywords = {Algebraic Geometry, foliations},
year = {2021},
doi = {10.1215/00127094-2020-0041},
URL = {https://doi.org/10.1215/00127094-2020-0041}
}

@article {MR4588595,
    AUTHOR = {Birkar, Caucher},
     TITLE = {Geometry of polarised varieties},
   JOURNAL = {Publ. Math. Inst. Hautes \'Etudes Sci.},
  FJOURNAL = {Publications Math\'ematiques. Institut de Hautes \'Etudes
              Scientifiques},
    VOLUME = {137},
      YEAR = {2023},
     PAGES = {47--105},
      ISSN = {0073-8301,1618-1913},
   MRCLASS = {14E30 (14D20)},
  MRNUMBER = {4588595},
MRREVIEWER = {Haidong\ Liu},
       DOI = {10.1007/s10240-022-00136-w},
       URL = {https://doi.org/10.1007/s10240-022-00136-w},
}

@book{kollár_mori_1998, place={Cambridge}, series={Cambridge Tracts in Mathematics}, title={Birational Geometry of Algebraic Varieties}, DOI={10.1017/CBO9780511662560}, publisher={Cambridge University Press}, author={Kollár, Janos and Mori, Shigefumi}, year={1998}, collection={Cambridge Tracts in Mathematics}}

@article{casagrande2013birational,
	title={Birational geometry of {F}ano 4-folds},
	author={Casagrande, Cinzia},
	journal={Mathematische Annalen},
	volume={355},
	pages={585--628},
	year={2013},
	publisher={Springer}
}

@article{Zhu2023Explicit,
  author       = {Minzhe Zhu},
  title        = {On Explicit Birational Geometry for Polarised Varieties},
  journal      = {arXiv preprint},
  number       = {arXiv:2303.12292},
  year         = {2023},
  url          = {https://arxiv.org/abs/2303.12292},
  eprint       = {2303.12292},
  archivePrefix = {arXiv},
  primaryClass = {math.AG}
}

@book {sing,
	AUTHOR = {Koll\'{a}r, J\'{a}nos},
	TITLE = {Singularities of the minimal model program},
	SERIES = {Cambridge Tracts in Mathematics},
	VOLUME = {200},
	NOTE = {With a collaboration of S\'{a}ndor Kov\'{a}cs},
	PUBLISHER = {Cambridge University Press, Cambridge},
	YEAR = {2013},
	PAGES = {x+370},
	ISBN = {978-1-107-03534-8},
	MRCLASS = {14E30 (14B05)},
	MRNUMBER = {3057950},
	MRREVIEWER = {Tommaso\ De Fernex},
	DOI = {10.1017/CBO9781139547895},
	URL = {https://doi.org/10.1017/CBO9781139547895},
}

@article {lai12,
	AUTHOR = {Lai, Ching-Jui},
	TITLE = {Bounding volumes of singular {F}ano threefolds},
	JOURNAL = {Nagoya Math. J.},
	FJOURNAL = {Nagoya Mathematical Journal},
	VOLUME = {224},
	YEAR = {2016},
	NUMBER = {1},
	PAGES = {37--73},
	ISSN = {0027-7630,2152-6842},
	MRCLASS = {14E30 (14J45)},
	MRNUMBER = {3572749},
	MRREVIEWER = {Andrea\ Fanelli},
	DOI = {10.1017/nmj.2016.21},
	URL = {https://doi.org/10.1017/nmj.2016.21},
}

@article {pereira,
	AUTHOR = {Pereira, Jorge Vit\'{o}rio and Svaldi, Roberto},
	TITLE = {Effective algebraic integration in bounded genus},
	JOURNAL = {Algebr. Geom.},
	FJOURNAL = {Algebraic Geometry},
	VOLUME = {6},
	YEAR = {2019},
	NUMBER = {4},
	PAGES = {454--485},
	ISSN = {2313-1691},
	MRCLASS = {37F75 (14E05)},
	MRNUMBER = {3957403},
	MRREVIEWER = {Claudia R. Alc\'{a}ntara},
	DOI = {10.14231/ag-2019-021},
	URL = {https://doi.org/10.14231/ag-2019-021},
}

@article{Lu21,
	title={Unboundedness of foliated varieties} ,
	author={X. Lu},
	year={2021},
	journal={preprint}
}

@book {bua,
	AUTHOR = {B{\u{a}}descu, Lucian},
	TITLE = {Algebraic surfaces},
	SERIES = {Universitext},
	NOTE = {Translated from the 1981 Romanian original by Vladimir Ma\c
	sek and revised by the author},
	PUBLISHER = {Springer-Verlag, New York},
	YEAR = {2001},
	PAGES = {xii+258},
	ISBN = {0-387-98668-5},
	MRCLASS = {14J10 (14-01 14J17)},
	MRNUMBER = {1805816},
	MRREVIEWER = {Marco\ Andreatta},
	DOI = {10.1007/978-1-4757-3512-3},
	URL = {https://doi.org/10.1007/978-1-4757-3512-3},
}

@article {HD,
	AUTHOR = {Spicer, Calum},
	TITLE = {Higher-dimensional foliated {M}ori theory},
	JOURNAL = {Compos. Math.},
	FJOURNAL = {Compositio Mathematica},
	VOLUME = {156},
	YEAR = {2020},
	NUMBER = {1},
	PAGES = {1--38},
	ISSN = {0010-437X,1570-5846},
	MRCLASS = {14E30 (37F75)},
	MRNUMBER = {4036447},
	MRREVIEWER = {Sheng\ Meng},
	DOI = {10.1112/s0010437x19007681},
	URL = {https://doi.org/10.1112/s0010437x19007681},
}

@article {corank,
	AUTHOR = {Cascini, Paolo and Spicer, Calum},
	TITLE = {M{MP} for co-rank one foliations on threefolds},
	JOURNAL = {Invent. Math.},
	FJOURNAL = {Inventiones Mathematicae},
	VOLUME = {225},
	YEAR = {2021},
	NUMBER = {2},
	PAGES = {603--690},
	ISSN = {0020-9910,1432-1297},
	MRCLASS = {14E30 (37F75)},
	MRNUMBER = {4285142},
	MRREVIEWER = {James\ McKernan},
	DOI = {10.1007/s00222-021-01037-1},
	URL = {https://doi.org/10.1007/s00222-021-01037-1},
}

@article{poincare1891lintegration,
	title={Sur L'int{\'E}gration alg{\'E}brique des {\'E}quations diff{\'E}rentielles du premier ordre et du premier degr{\'E}},
	author={Poincar{\'e}, Jules Henri},
	journal={Rendiconti del Circolo Matematico di Palermo (1884-1940)},
	volume={5},
	number={1},
	pages={161--191},
	year={1891},
	publisher={Springer}
}

@article {rank-one,
    AUTHOR = {Cascini, Paolo and Spicer, Calum},
     TITLE = {On the {MMP} for rank one foliations on threefolds},
   JOURNAL = {Forum Math. Pi},
  FJOURNAL = {Forum of Mathematics. Pi},
    VOLUME = {13},
      YEAR = {2025},
     PAGES = {Paper No. e20},
      ISSN = {2050-5086},
   MRCLASS = {14E30 (37F75)},
  MRNUMBER = {4963950},
       DOI = {10.1017/fmp.2025.10013},
       URL = {https://doi.org/10.1017/fmp.2025.10013},
}

@article {effective,
	AUTHOR = {Spicer, Calum and Svaldi, Roberto},
	TITLE = {Effective generation for foliated surfaces: results and
		applications},
	JOURNAL = {J. Reine Angew. Math.},
	FJOURNAL = {Journal f\"{u}r die Reine und Angewandte Mathematik. [Crelle's
		Journal]},
	VOLUME = {795},
	YEAR = {2023},
	PAGES = {45--84},
	ISSN = {0075-4102,1435-5345},
	MRCLASS = {14E30 (37F10)},
	MRNUMBER = {4542415},
	MRREVIEWER = {Guolei\ Zhong},
	DOI = {10.1515/crelle-2022-0067},
	URL = {https://doi.org/10.1515/crelle-2022-0067},
}

\end{document}